\newcommand{\mymod}[1]{\mkern 8mu (\mathrm{mod} \mkern 6mu #1)}
\newcommand{\mynewtheorem}[2]{\newtheorem{#1}{\indent #2}}
\newenvironment{myproof}[1][Proof]{\begin{proof}[\indent #1]}{\end{proof}}
\begin{document}

\title{\textbf{Slitherlink Signatures}}
\author{Nikolai Beluhov}
\date{}

\maketitle

\begin{abstract} Let $G$ be a planar graph and let $C$ be a cycle in $G$. Inside of each finite face of $G$, we write down the number of edges of that face which belong to $C$. This is the signature of $C$ in $G$. The notion of a signature arises naturally in the context of Slitherlink puzzles. The signature of a cycle does not always determine it uniquely. We focus on the ambiguity of signatures in the case when $G$ is a rectangular grid of unit square cells. We describe all grids which admit an ambiguous signature. For each such grid, we then determine the greatest possible difference between two cycles with the same signature on it. We also study the possible values of the total number of cycles which fit a given signature. We discuss various related questions as well. \end{abstract}

\section{Introduction} \label{intro}

Let $G$ be a planar graph. We assume that $G$ is finite, connected, and without bridges. So every edge of $G$ is part of a cycle in $G$ and separates two distinct faces of $G$.

Let $C$ be a cycle in $G$. Inside of each finite face $F$ of $G$, we write down the number of edges of $F$ which belong to $C$. The resulting assignment of numbers to faces is the \emph{signature} of $C$ in $G$.

One might expect at first that the signature of a cycle determines it uniquely. However, this is not always true. We call a signature \emph{ambiguous} when it corresponds to two or more distinct cycles.

\begin{figure}[ht] \null \hfill \begin{subfigure}{35pt} \centering \includegraphics{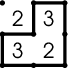} \caption{} \label{amb-a} \end{subfigure} \hspace{30pt} \begin{subfigure}{35pt} \centering \includegraphics{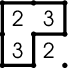} \caption{} \label{amb-b} \end{subfigure} \hfill \null \caption{} \label{amb-fig} \end{figure}

For example, Figure \ref{amb-fig} shows two distinct cycles in the $3 \times 3$ grid graph with the same signature.

The notion of a signature and the question of the ambiguity of signatures both arise naturally in the context of Slitherlink puzzles. A Slitherlink puzzle takes place on a rectangular grid of unit square cells. Some of the cells (though likely not all of them) contain a numerical clue. The solver's task is to draw a closed loop travelling along the grid lines so that, for each clued cell, the clue equals the number of edges of that cell which are part of the loop. In order to be considered sound, a Slitherlink puzzle must admit a unique solution.

Suppose that we are trying to design a sound Slitherlink puzzle whose solution is some predetermined loop. One way to go about it would be to simply clue all cells in accordance with that loop and then hope for the best. Should the resulting puzzle turn out way too easy, we might look for some superfluous clues to delete, so as to make it harder. On the other hand, we might sometimes discover that, even when all cells are clued, our loop is not the puzzle's only solution. This would mean that our loop can never be the solution to a sound Slitherlink puzzle.

Because of this connection, when $G$ is a grid graph, sometimes we refer to its signatures as \emph{Slitherlink} signatures. For the most part, this is the sort of signatures that we are going to study. However, occasionally we will touch upon the signatures of general planar graphs, too.

Let $S$ be the signature of $C$ in $G$. Following Slitherlink terminology, we refer to the number which $S$ assigns to a face $F$ of $G$ as the \emph{clue} of $F$ in $S$. We also say that $C$ is a \emph{solution} to $S$, or that $C$ \emph{satisfies} $S$. Though note that we measure grid sizes differently from the Slitherlink convention, by counting vertices instead of cells. So what would be called an $m \times n$ grid in standard Slitherlink terminology is, to us, an $(m + 1) \times (n + 1)$ grid instead.

The observation that a Slitherlink signature can be ambiguous is in \cite{B1}, with examples of sizes ranging from $3 \times 3$ to $8 \times 8$.

The series of exercises 410--420 in \cite{K1} explores the computational and mathematical aspects of Slitherlink puzzles. Exercise 411 asks if ambiguity is possible when every cell contains a clue, and its solution gives examples of sizes $3 \times 3$, $6 \times 6$, and $7 \times 7$. The term ``signature'' is used in this sense in the solution to exercise 415, where all Slitherlink signatures on the $6 \times 6$ grid are generated as a means towards the end of analysing the sound Slitherlink puzzles of that size which satisfy certain constraints. The idea of generalising the notion of a signature from Slitherlink puzzles to arbitrary planar graphs was suggested to the author by Knuth, in correspondence regarding that series of exercises.

Further connections between \cite{B1} and \cite{K1} and the present work are listed below.

Given a planar graph $G$, some of the most natural questions that we can ask about its signatures are as follows:

\begin{question} \label{qa} Does the signature of a cycle in $G$ always determine it uniquely? Or, equivalently, do there exist two cycles in $G$ with the same signature? \end{question}

Since a signature contains quite a lot of information, we might expect two solutions to the same signature to agree over most of $G$, in the sense that for most edges $e$ of $G$ either $e$ belongs to both solutions, or to neither. Then we might also wonder about the greatest extent to which two such solutions can differ. To formalise this notion, we define the \emph{difference} of two cycles in $G$ to be the set of all edges $e$ of $G$ where they disagree, so that $e$ is part of one of the two cycles but not the other.

\begin{question} \label{qb} What is the greatest possible size of the difference between two cycles with the same signature in $G$? \end{question}

We define the \emph{multiplicity} of a signature $S$ to be the total number of solutions to $S$.

\begin{question} \label{qc} What is the greatest possible multiplicity of a signature in $G$? Or, more generally: What are all possible values of the multiplicity of a signature in $G$? \end{question}

We can ask each one of these questions also for a class of graphs $\mathcal{G}$ rather than an individual graph $G$. For example, the second part of Question \textbf{\ref{qc}} would then become ``What are all possible values of the multiplicity of a signature in a graph of $\mathcal{G}$?''.

We go on to an overview of the contents of the rest of the paper.

Section \ref{prelim} lists some terms and notations.

Section \ref{trans} introduces one family of auxiliary graphs which we then use to answer the following question:

The definition of a signature singles out the exterior face -- it is the only one without a clue. This makes a lot of sense with Slitherlink puzzles since on grids the finite faces are all alike and the exterior face is very different from them. In the context of general planar graphs, though, the faces are all on equal footing -- for each face of $F$ of $G$, we can redraw $G$ isomorphically so that $F$ becomes the exterior one. Do we gain any additional information by clueing the exterior face as well?

This question was posed by Knuth in correspondence with the author. The author found a proof that no, we do not, and published the result as a journal problem. \cite{B2} The proof relies on the aforementioned family of auxiliary graphs, and has been reproduced in Section \ref{trans}. Equivalently, two cycles with the same signature are always of the same length. This is Theorem \ref{len}, and the original result is Corollary \ref{ext}.

Section \ref{even} introduces one family of edge subsets in planar graphs. The reason we consider them is that the difference of two cycles with the same signature is always of this kind. So all results that we establish about such subsets apply also to such differences. This family of edge subsets turns out to be isomorphic to one family of vertex subsets which has already been studied in the literature; see Section \ref{even} for the details.

Lemma \ref{4uv} and Corollary \ref{4uvmn} of Section \ref{even} are of particular interest. They show that the size of such a subset on a given rectangular grid can only take on some narrowly constrained values. Theorem \ref{kss} is the vertex subset analogue.

Section \ref{const} collects a number of constructions of ambiguous signatures. We rely on them for the existence parts of the main results of Sections \ref{size} and \ref{diff}.

Section \ref{size} completely resolves Question \textbf{\ref{qa}} on rectangular grids. The answer is given by Theorem \ref{ambs}.

The author reported this result and its proof in correspondence with Knuth. Subsequently, the sufficiency of condition (b) of Theorem \ref{ambs} for uniqueness was cited in the solution to exercise 411. Furthermore, Proposition \ref{22} was reproduced in \cite{K1} as exercise~420.

Condition (b) of Theorem \ref{ambs} strongly resembles one earlier uniqueness result, Theorem~2 in \cite{GL}, which is about a certain family of Minesweeper puzzles. Ultimately, both~that Minesweeper theorem and condition (b) of Theorem \ref{ambs} are corollaries of one fundamental fact about the isomorphic families of edge and vertex subsets discussed in Section \ref{even}. We spell out the details there.

Section \ref{div} establishes Theorem \ref{d8}. It is a divisibility result for the possible values of the size of the difference between two cycles with the same signature on rectangular grids.

Section \ref{diff} completely resolves Question \textbf{\ref{qb}} on rectangular grids. The answer is given by Theorems \ref{gds} and \ref{gdr}; the two handle the square case and the general rectangular case, respectively. The formula in the latter theorem yields the correct value of $0$ on grids where ambiguity is impossible, and so Theorem \ref{ambs} can be viewed as a special case of Theorem \ref{gdr}.

Section \ref{mult} explores Question \textbf{\ref{qc}}. In contrast to Questions \textbf{\ref{qa}} and \textbf{\ref{qb}}, we do not consider this question on each individual grid; instead, we consider it for the class of all rectangular grids as a whole. Still, even in this relaxed form it does not appear to be very easy, and our results on it are only partial.

By way of a warm-up, we resolve Question \textbf{\ref{qc}} completely for the class of all planar graphs. Theorem \ref{mpg} shows that, in this setting, every positive integer occurs as a multiplicity. We then look into one variant of Slitherlink with much weaker constraints which nevertheless provides some insights into the behaviour of the full ruleset. Here, too, every positive integer occurs as a multiplicity; this is Theorem \ref{mp}.

It is straightforward to construct infinitely many Slitherlink signatures with multiplicities $1$ and $2$. One example of a Slitherlink signature with multiplicity $4$, of size $7 \times 7$, is in \cite{B1} as well as in the solution to exercise 411 of \cite{K1}. Proposition \ref{m4} shows that in fact there exist infinitely many such signatures on square grids. The question of whether multiplicity $3$ is possible was posed by Bryce Herdt in a comment below \cite{B1} as well as by Knuth in the solution to exercise 411. Proposition \ref{m3} shows that, once again, there exist infinitely many such signatures on square grids.

Finally, Section \ref{further} collects some open problems and suggestions for further directions of research.

\section{Preliminaries} \label{prelim}

We write $A \Delta B$ for the symmetric difference of two sets $A$ and $B$. In terms of the sets' indicator vectors, this is simply addition over $\mathbb{F}_2$. So $A_1 \Delta A_2 \Delta \cdots \Delta A_k$ is well-defined for any number of sets; it is the set of all elements which are in an odd number of sets out of $A_1$, $A_2$, $\ldots$, $A_k$.

Let $G$ be a planar graph and let $G_1$ and $G_2$ be two subgraphs of $G$. We write $G_1 \Delta G_2$ for the symmetric difference of the edge sets of $G_1$ and $G_2$. (When both of $G_1$ and $G_2$ are cycles, this is the difference of $G_1$ and $G_2$ as defined in the introduction.) We also use $G_1 = G_2 \Delta E$ as a synonym for $E = G_1 \Delta G_2$.

A \emph{pseudocycle} in $G$ is the union of some number of pairwise disjoint cycles in $G$. So locally a pseudocycle looks a lot like a true cycle, in that it is a $2$-regular subgraph of~$G$. Globally, however, a pseudocycle need not be connected. A pseudocycle which fits a signature $S$ or, more generally, a Slitherlink puzzle $P$ is known as a \emph{weak solution} to $S$ or $P$. (Exercise 412 of \cite{K1}.)

We can define a \emph{pseudosignature} relative to a pseudocycle in the exact same way as a signature is defined relative to a true cycle. The main results of Sections \ref{trans}, \ref{size}, \ref{div}, and \ref{diff} continue to hold in the more general setting of pseudosignatures and their weak solutions. The proofs require only minor adjustments, or in some cases none at all.

We assume polygons in the plane to be closed, so that they include their boundaries. For example, when we say that an edge of some planar graph is contained inside of some polygon, we allow a vertex of that edge to lie on the boundary of the polygon.

To us, the grid of size $m \times n$ is a planar graph embedded in the plane so that its vertices are the integer points $(x, y)$ with $1 \le x \le m$ and $1 \le y \le n$ and its edges are the unit segments between these points. Throughout the rest of this section, let $G$ denote this particular planar graph.

We label each cell of $G$ with the coordinates of its center. So the vertices of the cell $(x, y)$ of $G$ are $(x \pm 1/2, y \pm 1/2)$, where we take all four combinations of signs. Notice that the coordinates of the vertices of $G$ are integers but the coordinates of the cells of $G$ are half-integers.

We call a vertex $(x, y)$ of $G$ \emph{even} or \emph{odd} depending on the parity of $x + y$. Similarly, we call a cell $(x, y)$ of $G$ \emph{even} or \emph{odd} depending on the parity of $\lfloor x \rfloor + \lfloor y \rfloor$. The even and odd vertices of $G$ form a checkerboard pattern, and so do the even and odd cells of $G$ as well.

When we describe certain sets of cells, it will be convenient to consider congruences involving half-integers. Given two half-integers $x$ and $y$ and an integer $d$, we write $x \equiv y \pmod d$ when $d$ divides the integer $x - y$.

We use each one of the notations $pq$ and $p$---$q$ both for the edge joining the vertices $p$ and $q$ of a graph and for the straight-line segment joining the points $p$ and $q$ in the plane. If necessary, we specify whether we mean an edge or a segment.

When a path $p_1p_2 \ldots p_k$ in $G$ proceeds along a straight line, so that $p_2$, $p_3$, $\ldots$, $p_{k - 1}$ lie in this order on the straight-line segment $p_1p_k$, for brevity we write just $p_1{\sim}p_k$, without listing all of the intermediate vertices explicitly.

The \emph{lower side} of $G$ is the subgraph $(1, 1){\sim}(m, 1)$ of $G$. The \emph{left}, \emph{right}, and \emph{upper sides} of $G$ are defined analogously.

For brevity, we say just ``the center of $G$'' instead of ``the center of symmetry of $G$''. This is the point $((m + 1)/2, (n + 1)/2)$. Notice that the center of $G$ need not be a vertex of $G$.

The \emph{frame} of $G$ is the axis-aligned rectangle of size $m \times n$ concentric with $G$. Explicitly, its vertices are the points $(x, y)$ with $x \in \{1/2, m + 1/2\}$ and $y \in \{1/2, n + 1/2\}$.

One construction defined in terms of $G$ will be particularly useful to us later on, and so we introduce special notation for it here. Let $Q$ be the frame of $G$. Denote by $\Phi_{m, n}$ the group of rigid motions in the plane generated by the four reflections with respect to the four sides of $Q$. Given an object $O$ in the plane, such as a point, a straight-line segment, or a polygon, we write $\Phi_{m, n}(O)$ for the set of the images of $O$ under all motions of $\Phi_{m, n}$. For example, if $V$ is the vertex set of $G$, then $\Phi_{m, n}(V)$ is a partitioning of the set of all integer points in the plane.

\section{Transversals} \label{trans}

Let $G$ be a planar graph and let $C_1$ and $C_2$ be two cycles in it with the same signature. We construct a new planar graph $T$ based on $G$, $C_1$, and $C_2$, as follows:

Let $E_1$ be the set of all edges of $G$ which are part of $C_1$ but not $C_2$, define $E_2$ similarly, and let $E = C_1 \Delta C_2 = E_1 \cup E_2$. Fix one point (but not an endpoint) on each edge of $E$. These points are the vertices of $T$.

Let $F$ be a finite face of $G$. Then the same number of edges of $F$ are in $E_1$ and $E_2$; say that there are $k$ of each. Construct $k$ pairwise disjoint curves all of which lie strictly inside of $F$ and each one of which joins two vertices of $T$, one on an edge of $E_1$ and one on an edge of $E_2$. These curves are the edges of $T$.

(It is straightforward to see that we can always pair up the vertices of $T$ on the boundary of $F$ in this way. Indeed, let $L$ be a list of these vertices in the order in which they occur on the boundary of $F$. Find two consecutive vertices in $L$ such that one of them lies on an edge of $E_1$ and the other one lies on an edge of $E_2$, pair them up, and delete them from $L$. Then keep doing this until $L$ becomes empty.)

\begin{figure}[ht] \null \hfill \begin{subfigure}{65pt} \centering \includegraphics{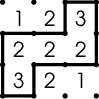} \caption{} \label{trans-a} \end{subfigure} \hspace{30pt} \begin{subfigure}{65pt} \centering \includegraphics{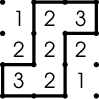} \caption{} \label{trans-b} \end{subfigure} \hspace{30pt} \begin{subfigure}{65pt} \centering \includegraphics{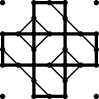} \caption{} \label{trans-c} \end{subfigure} \hfill \null \caption{} \label{trans-fig} \end{figure}

For example, Figure \ref{trans-fig} shows this for two cycles with the same signature in the $4 \times 4$ grid graph.

Clearly, each vertex of $T$ on the boundary of the exterior face of $G$ is of degree $1$ and all other vertices of $T$ are of degree $2$. So $T$ is the disjoint union of some number of paths and cycles. We call each such path or cycle a \emph{transversal}.

Each vertex of $T$ is a spot where an edge of $G$ and a transversal intersect one another. Notice that, by construction, the edges of $G$ that a transversal intersects alternate between $E_1$ and $E_2$.

\begin{lemma} \label{t4} The number of edges of $G$ that a transversal intersects is always divisible by $4$. \end{lemma}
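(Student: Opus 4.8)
The plan is to fix one transversal $\tau$ and count separately the number $p$ of edges of $E_1$ that it meets and the number $q$ of edges of $E_2$ that it meets. I would show that each of $p$ and $q$ is even, that $p = q$, and hence that the total $p + q$ that $\tau$ meets is $2p \equiv 0 \pmod 4$. The engine of the argument is to reinterpret the vertices of $T$ lying on $\tau$ as crossing points of $\tau$ with the two Jordan curves $C_1$ and $C_2$.

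First I would set up that reinterpretation. Since $C_1$ is a simple cycle in $G$, it is a Jordan curve, and every edge of $E_1$ lies on $C_1$ while no edge of $E_2$ does. At a vertex of $T$ on an edge $e \in E_1$, the transversal passes from the interior of one face of $G$, across $e$, into the interior of the neighbouring face, so this is a genuine transversal crossing of $C_1$; at a vertex of $T$ on an edge of $E_2$ the transversal meets no edge of $C_1$, and by construction the arcs of $T$ stay strictly inside the faces. Hence the number of crossings of $\tau$ with $C_1$ is exactly $p$, and symmetrically the number of crossings of $\tau$ with $C_2$ is exactly $q$.

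Next I would invoke parity of the crossing number to get that $p$ and $q$ are even. If $\tau$ is a cycle, it is a closed curve and therefore meets the Jordan curve $C_1$ an even number of times, so $p$ is even, and likewise $q$. If $\tau$ is a path, its two endpoints are the degree-$1$ vertices of $T$, which lie on the boundary of the exterior face of $G$; extending $\tau$ a little through each endpoint into the exterior face moves both endpoints into the unbounded region outside $C_1$ (and outside $C_2$) without creating any new crossing. A curve joining two points outside a Jordan curve crosses it an even number of times, so again $p$, and by the same reasoning $q$, is even. Finally, I would combine this with the alternation recorded in the construction: the edges of $E$ met by $\tau$ alternate between $E_1$ and $E_2$, which forces $\lvert p - q \rvert \le 1$ (with $p = q$ automatic when $\tau$ is a cycle). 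As $p$ and $q$ are both even, $\lvert p - q \rvert \le 1$ gives $p = q$, whence $\tau$ meets $p + q = 2p$ edges of $G$, a multiple of $4$.

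I expect the main obstacle to be the path case rather than the cycle case: I need the Jordan-curve parity principle, which is cleanest for closed curves, to apply to an open transversal. The care lies in justifying that the endpoints can be pushed into the exterior face so that they end up on the same (outer) side of both $C_1$ and $C_2$, and that this extension introduces no spurious crossings, so that the crossing counts remain exactly $p$ and $q$. Once that endpoint bookkeeping is pinned down, the evenness of $p$ and $q$ together with the alternation finishes the proof uniformly.
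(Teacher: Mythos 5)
Your proposal is correct and follows essentially the same route as the paper: alternation gives $\lvert p - q\rvert \le 1$, a Jordan-curve parity argument gives that $p$ and $q$ are each even, and together these force $p = q$ and a total of $2p \equiv 0 \pmod 4$. The only cosmetic difference is that you count crossings of the transversal with the closed curves $C_1$ and $C_2$ (extending a path transversal into the exterior face), whereas the paper dually notes that the transversal separates the interior of $G$ into two regions and counts how often $C_1$ enters and exits one of them; these are the same parity fact.
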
 

\begin{myproof} Consider any transversal $t$ which intersects $k_1$ edges of $E_1$ and $k_2$ edges of $E_2$. Since these intersections alternate between $E_1$ and $E_2$, we get that $|k_1 - k_2| \le 1$.

Observe that $t$ partitions the interior of $G$ into two regions. Let $R$ be one of them. Since $C_1$ enters and exits $R$ the same number of times, $k_1$ must be even. Similarly, $k_2$ must be even, too. But then $|k_1 - k_2| \le 1$ implies that $k_1 = k_2$. \end{myproof}

We go on to some applications of transversals.

\begin{theorem} \label{len} Two cycles with the same signature are always of the same length. \end{theorem}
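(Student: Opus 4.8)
The plan is to reduce the claim to the single identity $|E_1| = |E_2|$ and then to read that identity off the transversal structure already built. Writing $\ell(C)$ for the length of a cycle $C$, observe that $C_1$ and $C_2$ share exactly the edges in $C_1 \cap C_2$, so $\ell(C_1) = |C_1 \cap C_2| + |E_1|$ and $\ell(C_2) = |C_1 \cap C_2| + |E_2|$. Hence $\ell(C_1) - \ell(C_2) = |E_1| - |E_2|$, and it suffices to prove $|E_1| = |E_2|$.

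Next I would count the vertices of $T$ in two ways. By construction there is exactly one vertex of $T$ on each edge of $E = E_1 \cup E_2$, so $T$ has precisely $|E_1| + |E_2|$ vertices, of which $|E_1|$ lie on edges of $E_1$ and $|E_2|$ lie on edges of $E_2$. On the other hand, every vertex of $T$ lies on exactly one transversal, since the transversals are the connected components of $T$. Thus both $|E_1|$ and $|E_2|$ split as sums, over the transversals, of the numbers of $E_1$-edges and $E_2$-edges that each transversal meets.

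The key input is that these two per-transversal counts always agree. This is exactly what was established inside the proof of Lemma \ref{t4}: for any transversal $t$ meeting $k_1$ edges of $E_1$ and $k_2$ edges of $E_2$, the alternation of the crossings forces $|k_1 - k_2| \le 1$, while the fact that $t$ cuts the interior of $G$ into two regions, each crossed an even number of times by each of $C_1$ and $C_2$, forces $k_1$ and $k_2$ both to be even, whence $k_1 = k_2$. Summing $k_1 = k_2$ over all transversals yields $|E_1| = |E_2|$, and therefore $\ell(C_1) = \ell(C_2)$.

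As for where the difficulty lies, there is essentially no computation here, and the only subtlety is structural: one must recognise that it is the per-transversal balance $k_1 = k_2$, rather than merely the divisibility statement of Lemma \ref{t4}, that should be summed. One should also dispose of the degenerate case $E = \varnothing$, i.e. $C_1 = C_2$, where the identity holds trivially, and confirm that the region argument behind $k_1 = k_2$ applies uniformly to both path transversals, whose endpoints lie on the outer boundary, and cycle transversals, which are closed curves in the interior.
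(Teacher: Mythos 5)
Your argument is correct and is essentially the paper's own proof: reduce to $|E_1| = |E_2|$ and sum the per-transversal balance $k_1 = k_2$ established in the proof of Lemma \ref{t4} over all transversals. Your remark that one needs the equality $k_1 = k_2$ from inside that proof, rather than just the divisibility by $4$ in the lemma's statement, is a fair and accurate observation.
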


\begin{myproof} By Lemma \ref{t4}, each transversal intersects the same number of edges of $E_1$ and~$E_2$. Summing over all transversals, we see that $|E_1| = |E_2|$. \end{myproof}

\begin{corollary} \label{ext} Two cycles with the same signature, so that they traverse the same number of edges of each finite face, always traverse the same number of edges of the exterior face as well. \end{corollary}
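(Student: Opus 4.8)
The plan is to derive the corollary from Theorem \ref{len} by means of a single double-counting identity, so that essentially all the work has already been done. For a cycle $C$ in $G$ and a face $F$, write $s_C(F)$ for the number of edges of $F$ that lie on $C$. With this notation the signature of $C$ records exactly the values $s_C(F)$ over all finite faces $F$, while the quantity we wish to compare between the two cycles is $s_C(F_\infty)$, where $F_\infty$ denotes the exterior face.

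First I would observe that, because $G$ has no bridges, every edge of $G$ separates exactly two distinct faces. Consequently, summing $s_C(F)$ over all faces of $G$ counts each edge of $C$ exactly twice and each edge not on $C$ not at all. This yields the identity
$$ \sum_{F} s_C(F) = 2 |C|, $$
where the sum ranges over all faces of $G$, finite and infinite. Isolating the contribution of the exterior face then gives
$$ s_C(F_\infty) = 2|C| - \sum_{F \text{ finite}} s_C(F). $$

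Next I would apply this identity to each of the two cycles $C_1$ and $C_2$ and compare. Since they share a signature, we have $s_{C_1}(F) = s_{C_2}(F)$ for every finite face $F$, so the two sums on the right-hand side coincide. By Theorem \ref{len} the cycles have equal length, $|C_1| = |C_2|$, so the terms $2|C|$ coincide as well. Subtracting, we conclude that $s_{C_1}(F_\infty) = s_{C_2}(F_\infty)$, which is precisely the assertion.

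The only substantive ingredient is the equality of lengths $|C_1| = |C_2|$, and this is already furnished by Theorem \ref{len}; the remainder is the elementary fact that the face-by-face edge counts of a cycle sum to twice its length. I do not anticipate any genuine obstacle, since the no-bridge hypothesis guarantees that every edge borders exactly two faces, making the double count exact with no boundary cases to treat separately.
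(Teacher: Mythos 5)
Your argument is correct and is essentially identical to the paper's own proof: both rest on the observation that summing the clues over all faces, including the exterior one, counts each edge of the cycle twice, so that equality of signatures plus Theorem \ref{len} forces equality of the exterior clues. No issues.
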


\begin{myproof} This is an equivalent restatement of Theorem \ref{len}, as the sum of the clues of all faces, including the exterior one, counts every edge of the cycle twice, and so equals twice the cycle's length.

(We can also derive this directly from Lemma \ref{t4}. By it, each path transversal connects two vertices of $T$ on two edges of the exterior face which belong one each to $E_1$ and $E_2$. This induces a bijection between the edges of the exterior face in $E_1$ and $E_2$.) \end{myproof}

A different proof of Corollary \ref{ext} which does not rely on transversals was found by Mebane. \cite{B2} For simplicity, here the argument has been modified so that it refers to the equivalent Theorem \ref{len} instead.

\begin{myproof}[Alternative proof of Theorem \ref{len} and Corollary \ref{ext}] Note that $C_1$ partitions the plane into an interior region $I_1$ and an exterior region $O_1$, and define $I_2$ and $O_2$ similarly. Let $R_1 = I_1 \cap O_2$, $R_2 = I_1 \cap I_2$, $R_3 = O_1 \cap I_2$, and $R_4 = O_1 \cap O_2$. Let also $r_{i, j}$ denote the number of edges of $G$ which separate a face in $R_i$ and a face in $R_j$.

From the point of view of $C_1$, the sum of all clues in $R_1$ equals the number of edges of $C_1$ which separate a face in $R_1$ and a face in one of $R_3$ and $R_4$. This is $r_{1, 3} + r_{1, 4}$. Similarly, from the point of view of $C_2$, the sum of all clues in $R_1$ equals $r_{1, 2} + r_{1, 3}$. So $r_{1, 2} = r_{1, 4}$. An analogous argument involving the sum of all clues in $R_3$ shows that $r_{2, 3} = r_{3, 4}$ as well. Hence, $|E_1| = r_{1, 4} + r_{2, 3} = r_{1, 2} + r_{3, 4} = |E_2|$. \end{myproof}

One more application of transversals is the following:

\begin{theorem} \label{d4} The size of the difference between two cycles with the same signature is always divisible by $4$. \end{theorem}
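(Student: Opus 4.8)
The plan is to count the edges of the difference $E = C_1 \Delta C_2$ by grouping them according to which transversal passes through them, and then to apply Lemma \ref{t4} to each group separately. The key observation is that the construction of $T$ places exactly one vertex on each edge of $E$, and conversely every vertex of $T$ lies on an edge of $E$. Thus there is a natural bijection between the edges of $E$ and the vertices of $T$; in particular $|E|$ equals the total number of vertices of $T$.

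Next I would exploit the structure of $T$ as a disjoint union of path and cycle transversals. Since distinct transversals are pairwise disjoint, each vertex of $T$ belongs to exactly one of them, so the vertex set of $T$ is partitioned among the transversals. For a fixed transversal $t$, the number of vertices of $T$ lying on $t$ is precisely the number of edges of $G$ that $t$ intersects, i.e.\ the quantity $k_1 + k_2$ from the proof of Lemma \ref{t4}. Summing over all transversals then gives
\[
|E| = \sum_{t} \bigl(\text{number of edges of } G \text{ that } t \text{ intersects}\bigr).
\]
By Lemma \ref{t4} each summand on the right is divisible by $4$, and hence so is $|E|$.

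There is essentially no hard step here; the argument is a bookkeeping refinement of the summation already used to prove Theorem \ref{len}, where one instead sums $k_1 - k_2 = 0$ over all transversals. The one point that must be checked with a little care is that the correspondence between edges of $E$ and vertices of $T$ is a genuine bijection, so that no edge of the difference is counted twice or omitted; but this is immediate from the defining property that $T$ carries exactly one vertex per edge of $E$ together with the disjointness of distinct transversals. Equivalently, since $|E_1| = |E_2|$ and these sets are disjoint we have $|E| = 2|E_1|$, so the claim reduces to showing that $|E_1|$ is even; the transversal decomposition supplies exactly this, as Lemma \ref{t4} shows that each transversal meets an even number $k_1$ of edges of $E_1$.
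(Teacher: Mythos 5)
Your argument is correct and is exactly the paper's proof of Theorem \ref{d4}, which reads in full ``By Lemma \ref{t4}, after we sum over all transversals''; you have simply spelled out the bookkeeping (one vertex of $T$ per edge of $E$, vertices partitioned among transversals) that the paper leaves implicit. No differences of substance.
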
 

\begin{myproof} By Lemma \ref{t4}, after we sum over all transversals. \end{myproof}

The divisibility result of Theorem \ref{d4} cannot be improved, in the sense that every positive integer multiple of $4$ does occur, in some planar graph, as the size of the difference between some pair of cycles with the same signature.

Indeed, let $k$ be a positive integer. Construct a planar graph $G$ as follows: The vertices of $G$ are $p_{i, j}$ with $1 \le i \le 4$ and $1 \le j \le k$. Both indices run cyclically, so that, for example, $p_{1, 1}$ is the same vertex as $p_{5, k + 1}$. The edges of $G$ are $p_{i, j}p_{i + 1, j}$ for all $i$ and $j$, $p_{2, j}p_{4, j}$ for all $j$, and $p_{3, j}p_{1, j + 1}$ for all $j$. Thus there are a total of $4k$ vertices and $6k$ edges in $G$. We embed $G$ in the plane so that the boundary of the exterior face is $p_{1, 1}p_{2, 1}p_{3, 1}p_{1, 2}p_{2, 2}p_{3, 2} \ldots p_{1, k}p_{2, k}p_{3, k}$. Then $p_{1, 1}p_{2, 1}p_{4, 1}p_{3, 1}p_{1, 2}p_{2, 2}p_{4, 2}p_{3, 2} \ldots p_{1, k}p_{2, k}p_{4, k}\allowbreak p_{3, k}$ and $p_{1, 1}p_{4, 1}p_{2, 1}p_{3, 1}p_{1, 2}p_{4, 2}p_{2, 2}p_{3, 2} \ldots p_{1, k}p_{4, k}p_{2, k}p_{3, k}$ are two cycles in $G$ with the same signature and a difference of size $4k$.

Similarly to Theorem \ref{len} and Corollary \ref{ext}, Theorem \ref{d4} also admits a proof without the use of transversals.

\begin{myproof}[Alternative proof of Theorem \ref{d4}] We pick up where the alternative proof of Theorem~\ref{len} and Corollary \ref{ext} left off. By considering the sum of all clues in $R_2$, we see that furthermore $r_{1, 2} = r_{2, 3}$. Hence, the four summands on the right-hand side of the identity $|E| = r_{1, 2} + r_{2, 3} + r_{3, 4} + r_{1, 4}$ are pairwise equal. \end{myproof}

Transversals will play a major role also in Section \ref{div}, where we will see that Theorem~\ref{d4} can be strengthened on rectangular grids.

\section{Totally Even Sets} \label{even}

Let $G$ be a planar graph. We say that a subset $E$ of the edges of $G$ is \emph{totally even} when every vertex of $G$ is incident with an even number of edges of $E$ and every finite face of $G$ contains an even number of edges of $E$. (By the same counting argument which shows the equivalence of Theorem \ref{len} and Corollary \ref{ext}, we get that $E$ will then contain an even number of edges of the exterior face of $G$, too.)

Clearly, if two cycles in $G$ have the same signature, then their difference is a totally even set. However, the converse is not always true, in the sense that a totally even subset of $G$ might not occur as the difference of two such cycles.

Observe that the indicator vectors of the totally even subsets of $G$ form a vector space over $\mathbb{F}_2$. For convenience, from now on we will say simply that the subsets themselves form such a space, so that sums in the vector setting correspond to symmetric differences in the subset setting.

Given $G$, we construct a new planar graph $H$ as follows: Fix one point (but not an endpoint) on each edge of $G$, as well as one point strictly inside of each finite face of~$G$. The vertices of $H$ are the vertices of $G$ together with all of these points. Thus the vertices of $H$ correspond to the vertices, edges, and finite faces of $G$. Two vertices of $H$ are joined by an edge of $H$ if and only if one of them corresponds to an edge of $G$ and the other one corresponds either to a vertex of $G$ incident with that edge or to a finite face of $G$ containing that edge.

Then $H$ is bipartite. One part consists of the vertices which correspond to the edges of $G$, and the other one consists of the vertices which correspond to the vertices and finite faces of $G$. We colour these two parts in white and black, respectively.

Each totally even subset $E$ of $G$ corresponds to a subset $V$ of the white vertices of $H$ such that every vertex of $H$ has an even number of neighbours in $V$; furthermore, the correspondence is bijective. These subsets of the vertices of $H$ can be viewed as elements of the kernel over $\mathbb{F}_2$ of the biadjacency matrix of $H$, where the rows and columns of the matrix correspond to the black and white vertices of $H$, respectively. For this reason, we call them \emph{kernel} subsets. So, in summary, the totally even subsets of the edges of $G$ are isomorphic to the kernel subsets of the white vertices of $H$.

In the special case when $G$ is the $m \times n$ grid, $H$ becomes the $(2m - 1) \times (2n - 1)$ grid. The white part of $H$ will then consist of the odd vertices of $H$.

The kernel subsets of grids have already been studied in the literature; see \cite{TV} and~\cite{BL}.

Our Lemma \ref{symm}, about the symmetries of the totally even subsets of square grids, is a direct corollary of Proposition 1 in \cite{TV}. A different argument which, in essence, proves Lemma \ref{symm} is sketched in \cite{B1}.

There is also substantial overlap between some of the results of \cite{TV} and \cite{BL} and our Lemmas \ref{basis} and \ref{ter} which establish bases for the spaces of the totally even subsets of grids.

One particularly important fact in this vicinity is that the nullity of the adjacency matrix of the $M \times N$ grid equals $\gcd(M + 1, N + 1) - 1$. Note that we can immediately derive the nullity of the biadjacency matrix as well: In a bipartite graph with parts $V_1$ and $V_2$ and an adjacency matrix of nullity $\mu$, the nullity of the biadjacency matrix equals $(\mu - |V_1| + |V_2|)/2$ when its rows and columns correspond to the vertices of $V_1$ and $V_2$, respectively.

The biadjacency form of the result is part of Theorem 1 in \cite{TV}; there, it is also noted that, for these matrices, the choice of an underlying field $A$ does not matter. The adjacency form is mentioned briefly (without proof and with an implicit $A$) in Remark~5 at the end of the earlier work \cite{DT}.

The Minesweeper theorem of \cite{GL} we cited in the introduction is a direct corollary of the special case when $M + 1$ and $N + 1$ are relatively prime, with $A = \mathbb{Q}$. Similarly, the sufficiency of condition (b) of Theorem \ref{ambs} for uniqueness is a direct corollary of the special case when $\gcd(M + 1, N + 1) = 2$, with $A = \mathbb{F}_2$.

We turn to the totally even subsets of grids. Throughout the rest of this section, let $G$ be the rectangular grid of size $m \times n$.

\begin{lemma} \label{u} Let $E_1$ and $E_2$ be two totally even subsets of a grid $G$. Suppose that $E_1$ and $E_2$ agree on a side of $G$. Then $E_1$ and $E_2$ agree on all of $G$. \end{lemma}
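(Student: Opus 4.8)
The plan is to first reduce the statement to a claim about a single totally even set. Since the totally even subsets of $G$ form a vector space over $\mathbb{F}_2$, the difference $E = E_1 \Delta E_2$ is itself totally even, and $E_1$ and $E_2$ agree on a given set of edges precisely when $E$ avoids that set. Thus it suffices to prove that if a totally even subset $E$ contains no edge of some side of $G$, then $E$ is empty. By the symmetry of the grid we may assume this side is the lower one, so that $E$ contains none of the horizontal edges at height $y = 1$.

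Next I would propagate the emptiness of $E$ upward, one row at a time, the engine being an alternation between the two defining conditions of a totally even set. Suppose we know that $E$ contains no horizontal edge at some height $y$. Apply the vertex condition at each vertex $(x, y)$, whose incident edges are the two horizontal edges at height $y$ meeting there, the vertical edge descending to height $y - 1$, and the vertical edge rising to height $y + 1$; using that the horizontal edges and (by the previous stage of the induction) the descending vertical edge all lie outside $E$, the even-count forces the rising vertical edge out of $E$ as well. Hence no vertical edge between heights $y$ and $y + 1$ lies in $E$. Now apply the face condition to each cell in this row, whose four edges are the two horizontal edges at heights $y$ and $y + 1$ bounding it together with the two vertical edges on its sides; using that the lower horizontal edge and both vertical edges are outside $E$, the even-count forces the upper horizontal edge, at height $y + 1$, out of $E$. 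This returns us to the starting hypothesis one row higher, and so the induction sweeps the entire grid and shows $E = \varnothing$.

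I expect the only delicate point to be the bookkeeping at the boundary. At corner and edge vertices some of the four incident edges are missing, and in the first row ($y = 1$) there is no descending vertical edge; in each of these cases the parity count involves fewer terms, but the same conclusion goes through, and the base case of the induction is exactly the hypothesis that the lower side is free of $E$. None of this is a genuine obstacle, since the argument is a direct parity propagation. The main care is simply in stating the induction cleanly and in verifying that the vertex and face conditions interlock as claimed at every vertex and cell, including those lying on the frame of $G$.
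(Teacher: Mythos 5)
Your proof is correct and is essentially the paper's own argument made explicit: the paper sweeps the edges bottom-to-top in colexicographic order of their midpoints, determining each edge from the vertex or face condition applied to already-swept edges, which is exactly your row-by-row alternation of vertex and cell parity conditions applied to $E = E_1 \Delta E_2$. No gaps; your version just spells out the bookkeeping that the paper leaves as a one-line sketch.
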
 

(As in the introduction, by $E_1$ and $E_2$ ``agreeing'' on a subgraph of $G$, we mean that each edge of the subgraph is in $E_1$ if and only if it is in $E_2$.)

\begin{myproof} Suppose, for concreteness, that $E_1$ and $E_2$ agree on the lower side of $G$. To confirm that in fact they agree everywhere, simply examine all edges of $G$ one by one, going from left to right and then from bottom to top. (Or, equivalently, in colexicographic order of the coordinates of the edges' midpoints.) \end{myproof}

We proceed to focus on the square case; after that, we will reduce the general rectangular case to it. Suppose, temporarily, that $m = n$ and $G$ is a square grid.

To begin with, we describe one family of especially nice totally even subsets of $G$.

For each $1 \le i \le n - 1$, let $D(i)$ be the rectangle with vertices $(1/2, i + 1/2)$, $(i + 1/2, 1/2)$, $(n - i + 1/2, n + 1/2)$, and $(n + 1/2, n - i + 1/2)$. Then $D(i)$ is concentric with $G$, its sides are all of slope $\pm 1$, its side-lengths are $\sqrt{2}i$ and $\sqrt{2}(n - i)$, and it is inscribed in the frame $Q$ of $G$ so that one of its vertices lies on each side of $Q$ and partitions that side into two segments of lengths $i$ and $n - i$. We call each such rectangle $D(i)$ a \emph{diamond}.

Let $B(i)$ be the set of all edges of $G$ contained inside of $D(i)$. Then $B(i)$ is a totally even subset of $G$. We call these the \emph{diamond} subsets of $G$. Notice that, when $D(i)$ is partitioned into $2i \cdot 2(n - i)$ squares of side $1/\sqrt{2}$, exactly one of the two diagonals of each such square is an edge of $G$, and this accounts for all edges of $G$ in $B(i)$.

\begin{figure}[ht] \centering \includegraphics{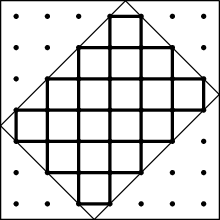} \caption{} \label{diamond} \end{figure}

For example, Figure \ref{diamond} shows $D(3)$ and $B(3)$ on the grid of size $7 \times 7$.

\begin{lemma} \label{basis} On a square grid, the diamond sets form a basis for the space of all totally even sets. \end{lemma}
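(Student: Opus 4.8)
The plan is to exhibit a linear map under which the diamond sets become the standard basis vectors of $\mathbb{F}_2^{\,n-1}$, and then invoke Lemma \ref{u} to transfer this back. Concretely, let $V$ denote the $\mathbb{F}_2$-space of all totally even sets of $G$, and let $\rho \colon V \to \mathbb{F}_2^{\,n-1}$ be the restriction map that records, for a totally even set $E$, which of the $n - 1$ edges of the lower side $(1, 1){\sim}(n, 1)$ belong to $E$. This $\rho$ is $\mathbb{F}_2$-linear, and Lemma \ref{u} says precisely that $\rho$ is injective: two totally even sets with the same restriction to the lower side coincide everywhere. Since there are exactly $n - 1$ diamond sets $B(1), \ldots, B(n - 1)$, it will suffice to show that $\rho$ carries them to the $n - 1$ distinct standard basis vectors; the injectivity of $\rho$ then forces $\{B(i)\}$ to be a basis of $V$ (indeed $\rho$ becomes an isomorphism onto $\mathbb{F}_2^{\,n-1}$).

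The heart of the argument is therefore the claim that, for each $i$, the diamond set $B(i)$ meets the lower side in exactly one edge, namely the edge $e_i$ joining $(i, 1)$ and $(i + 1, 1)$. To see this, write $D(i)$ as the intersection of four half-planes,
\[
  i + 1 \le x + y \le 2n - i + 1, \qquad -i \le x - y \le i,
\]
which one reads off from its four sides (each of slope $\pm 1$). Denote by $e_k$ the lower-side edge joining $(k, 1)$ and $(k + 1, 1)$, for $1 \le k \le n - 1$; such an edge lies in $B(i)$ if and only if both of its endpoints lie in $D(i)$. Substituting $(k, 1)$ and $(k + 1, 1)$ into the four inequalities, the two lower constraints $x + y \ge i + 1$ and $x - y \le i$ (the wedge at the southern vertex $(i + 1/2, 1/2)$ of $D(i)$) are the only binding ones, and together they pin down $k = i$. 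This is the one genuinely computational step, though it amounts to no more than tracking a narrow wedge against a row of collinear vertices; all remaining constraints hold automatically for $1 \le i, k \le n - 1$.

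Granting the claim, $\rho(B(i)) = \{e_i\}$, so the images $\rho(B(1)), \ldots, \rho(B(n - 1))$ are exactly the $n - 1$ standard basis vectors of $\mathbb{F}_2^{\,n-1}$ under the identification of coordinates with lower-side edges. These span the codomain and are linearly independent; combined with the injectivity of $\rho$ from Lemma \ref{u}, this shows that $\rho$ maps $V$ isomorphically onto $\mathbb{F}_2^{\,n-1}$ and that the diamond sets are the preimages of a basis. Hence the diamond sets are linearly independent and span $V$, that is, they form a basis, as claimed. One could instead bound $\dim V$ from above by $n - 1$ using the nullity formula recorded earlier in this section, but Lemma \ref{u} supplies this bound together with surjectivity in one stroke and with less overhead. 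I expect the wedge-meets-row verification of the second paragraph to be the only place demanding genuine care; everything else is formal linear algebra over $\mathbb{F}_2$.
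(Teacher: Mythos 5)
Your proof is correct and follows essentially the same route as the paper's: both arguments rest on the observation that $B(i)$ meets the lower side in exactly the single edge $(i,1)$---$(i+1,1)$, combined with Lemma \ref{u} as injectivity of restriction to that side. The paper merely phrases the spanning step constructively (forming the symmetric difference of the diamonds matching $E$ on the lower side and invoking Lemma \ref{u}) rather than via your explicit isomorphism $\rho$, but the content is identical.
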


\begin{myproof} Let $s$ be the lower side of $G$. Each diamond $D(i)$ contains exactly one edge of $s$, namely $(i, 1)$---$(i + 1, 1)$. (So, in particular, no symmetric difference of one or more pairwise distinct diamond sets can be empty.)

Let $E$ be a totally even subset of $G$. For each edge of $E$ in $s$, take the corresponding diamond set. Let $E'$ denote the symmetric difference of all of these diamond sets. Then $E$ and $E'$ agree on $s$. Hence, by Lemma \ref{u}, in fact they agree everywhere. \end{myproof}

By Lemma \ref{basis}, every totally even subset $E$ of $G$ can be expressed uniquely in the form $E = B(a_1) \Delta B(a_2) \Delta \cdots \Delta B(a_k)$ with $a_1 < a_2 < \cdots < a_k$. We call this the \emph{diamond decomposition} of $E$.

\begin{lemma} \label{symm} On a square grid $G$, every totally even set is axially symmetric with respect to both diagonals of $G$ as well as centrally symmetric with the respect to the center of $G$. \end{lemma}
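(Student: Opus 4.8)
The plan is to leverage the diamond basis from Lemma \ref{basis}. Every symmetry $\sigma$ under consideration is a rigid motion that maps $G$ to itself, so it permutes the edges of $G$ and therefore acts as an $\mathbb{F}_2$-linear involution on the space of all edge subsets of $G$; in particular it carries totally even sets to totally even sets. It thus suffices to verify that each such $\sigma$ fixes every basis element $B(i)$, since then $\sigma$ fixes every element of their span.

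First I would record the three symmetries explicitly. Let $r_1$ be the reflection across the diagonal $y = x$ of $G$, let $r_2$ be the reflection across the diagonal $x + y = n + 1$, and let $\rho$ be the central symmetry about the center $((n + 1)/2, (n + 1)/2)$. Since the two diagonals are perpendicular and meet at the center, we have $\rho = r_1 r_2$, so it is enough to treat $r_1$ and $r_2$; the central case follows. Each of $r_1$, $r_2$, and $\rho$ maps $G$ onto itself.

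Next I would check that every diamond $D(i)$ is invariant under $r_1$ and $r_2$. This is a direct coordinate computation on its four vertices $(1/2, i + 1/2)$, $(i + 1/2, 1/2)$, $(n - i + 1/2, n + 1/2)$, and $(n + 1/2, n - i + 1/2)$. The reflection $r_1$, given by $(x, y) \mapsto (y, x)$, swaps the first two vertices and swaps the last two; the reflection $r_2$, given by $(x, y) \mapsto (n + 1 - y, n + 1 - x)$, swaps the first vertex with the third and the second with the fourth. In either case the vertex set of the rectangle $D(i)$ is preserved, hence so is $D(i)$ itself.

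Finally I would conclude. Because $\sigma \in \{r_1, r_2, \rho\}$ preserves both $G$ and $D(i)$, an edge $e$ of $G$ lies inside $D(i)$ if and only if $\sigma(e)$ does; hence $\sigma(B(i)) = B(i)$ for every $i$. Writing an arbitrary totally even set through its diamond decomposition $E = B(a_1) \Delta \cdots \Delta B(a_k)$ and applying the linear map $\sigma$ gives $\sigma(E) = B(a_1) \Delta \cdots \Delta B(a_k) = E$, which is exactly the asserted invariance under both diagonal reflections and the central symmetry. I do not anticipate any genuine obstacle: the only substantive step is the vertex-permutation check for $r_1$ and $r_2$, which is routine, and the remainder is bookkeeping with the basis and $\mathbb{F}_2$-linearity.
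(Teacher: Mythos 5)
Your proposal is correct and follows essentially the same route as the paper: the paper's proof simply observes that the symmetries are clear for the diamond sets and then invokes Lemma \ref{basis}, which is exactly your argument with the coordinate verification and the $\mathbb{F}_2$-linearity spelled out explicitly.
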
 

\begin{myproof} This is clear for the diamond sets. Then for arbitrary totally even sets it follows by Lemma \ref{basis}. \end{myproof}

We are ready to tackle the general rectangular case.

\begin{lemma} \label{ter} Let $m$ and $n$ be positive integers with $d = \gcd(m, n)$, $m = dm'$, and $n = dn'$. Let $G$ be the grid of size $m \times n$ and let $G^\star$ be the grid of size $d \times d$. Given a subset $E^\star$ of the edges of $G^\star$, let $\Psi(E^\star)$ denote the disjoint union of all copies of $E^\star$ in the family $\Phi_{d, d}(E^\star)$ which are contained within $G$. Then the totally even subsets of $G$ are exactly the subsets of the edges of $G$ of the form $\Psi(E^\star)$ for some totally even subset $E^\star$ of $G^\star$. Thus, in particular, the images under $\Psi$ of the diamond subsets of $G^\star$ form a basis for the space of all totally even subsets of $G$. \end{lemma}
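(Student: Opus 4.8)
The plan is to regard $\Psi$ as an $\mathbb{F}_2$-linear map from the space of totally even subsets of $G^\star$ into the space of all edge subsets of $G$, to show first that its image consists of totally even subsets of $G$, then that it is injective, and finally to match dimensions so as to conclude that it is a bijection onto the totally even subsets of $G$. Before anything else I would record the combinatorial picture of $\Psi$. Since $m = dm'$ and $n = dn'$, the grid $G$ is tiled by $m'n'$ reflected copies of the $d \times d$ block, one for each motion of $\Phi_{d, d}$ carrying $G^\star$ into $G$. Distinct blocks share no vertex, hence no edge, and the edges of $G$ that belong to no block — call them \emph{connecting edges} — join vertices of two adjacent blocks. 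Thus $\Psi(E^\star)$ is the edge-disjoint union of the $m'n'$ reflected copies of $E^\star$, it never contains a connecting edge, and $\Psi$ is linear because it acts independently on the disjoint blocks.

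The first substantive step is to verify that $\Psi(E^\star)$ is totally even whenever $E^\star$ is. A reflection preserves total evenness locally, so every vertex and every finite face of $G$ lying in the interior of a single block is incident with, respectively contains, an even number of edges of $\Psi(E^\star)$. For a vertex $v$ on a shared boundary of two or more blocks, the edges of $G$ at $v$ are the block-interior edges together with the absent connecting edges; the former are exactly the edges at the corresponding boundary vertex of $G^\star$, whose number is even by hypothesis. For a face straddling a block boundary, the two edges transverse to that boundary are connecting edges and so are absent, whereas the two edges parallel to it are interchanged by the reflection across that boundary and hence are present or absent together, giving a count of $0$ or $2$; and a face meeting four blocks consists of four connecting edges and so is empty. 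I expect this to be the part requiring the most care, but each case is local and reduces to the total evenness of $E^\star$ together with the pairing effected by the reflections.

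It then remains to pass from ``the image is totally even'' to ``the image is everything''. The map $\Psi$ is injective because the restriction of $\Psi(E^\star)$ to the block occupying the original position of $G^\star$ is $E^\star$ itself. By Lemma \ref{basis}, the space of totally even subsets of the $d \times d$ grid $G^\star$ has dimension $d - 1$. On the other hand, the totally even subsets of $G$ are isomorphic to the kernel subsets of the white vertices of the $(2m - 1) \times (2n - 1)$ grid $H$, and by the nullity formula cited earlier their dimension is $(\mu - |V_1| + |V_2|)/2$, where $\mu = \gcd(2m, 2n) - 1 = 2d - 1$ is the nullity of the adjacency matrix of $H$ and the black (even) vertices $V_1$ outnumber the white (odd) vertices $V_2$ by exactly $1$; this again equals $(2d - 1 - 1)/2 = d - 1$. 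An injective linear map between $\mathbb{F}_2$-spaces of the same finite dimension is a bijection, so every totally even subset of $G$ is $\Psi(E^\star)$ for a unique totally even $E^\star$ on $G^\star$, and, $\Psi$ being an isomorphism, it sends the diamond basis $B(1), \dots, B(d - 1)$ of $G^\star$ to a basis of the totally even subsets of $G$.

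The main obstacle is the case analysis of the second paragraph. One might hope to reach surjectivity more directly through Lemma \ref{u} — it would suffice to match $\Psi(E^\star)$ with a given totally even $E$ along a single side of $G$ — but this route seems to require knowing in advance that a totally even subset of $G$ carries no connecting edge, a statement essentially equivalent to the conclusion, so I expect the dimension count to be unavoidable in any case.
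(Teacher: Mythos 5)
Your proof is correct, but it reaches the converse direction by a genuinely different route from the paper. The paper's proof is a Euclidean-algorithm-style induction on $m' + n'$: assuming $m < n$, it splits $G$ into a square $m \times m$ subgrid $G_1$ and the remaining $m \times (n-m)$ subgrid $G_2$, uses the proof of Lemma \ref{basis} plus Lemma \ref{u} to pin down a given totally even set $E$ on $G_1$, observes that vertex-evenness along the top of $G_1$ then forces all connecting edges between $G_1$ and $G_2$ out of $E$, and recurses on $G_2$ before reassembling via Lemma \ref{u}. You instead establish that $\Psi$ is an injective linear map whose image lands in the totally even sets, and close the argument by matching dimensions, importing the dimension $d-1$ of the totally even sets of $G$ from the cited nullity formula for grid adjacency matrices (your arithmetic there is right: $\mu = \gcd(2m,2n) - 1 = 2d-1$, the black vertices outnumber the white by one, so the biadjacency nullity is $d-1$). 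Both arguments are sound; your verification that $\Psi(E^\star)$ is totally even is a careful expansion of what the paper dismisses as ``straightforward,'' and your injectivity and linearity claims are justified because the blocks are vertex-disjoint and $\Phi_{d,d}$ moves the original block to each tile by exactly one motion. The trade-off is self-containment: the paper's Lemmas \ref{basis} and \ref{ter} are in effect an independent, elementary derivation of that very dimension count (the paper notes the ``substantial overlap'' with the results of the cited literature), whereas your proof outsources the only hard quantitative input to an external theorem. This is not circular within the paper's logical structure, but it does mean your argument proves less from scratch; in exchange it is shorter and isolates cleanly where the content lies. Your closing remark is also accurate: a direct appeal to Lemma \ref{u} for surjectivity founders precisely on not knowing a priori that totally even sets avoid the connecting edges, and the paper's induction is exactly the device that extracts that fact without a dimension count.
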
 

\begin{myproof} It is straightforward to see that, if $E^\star$ is a totally even subset of $G^\star$, then $\Psi(E^\star)$ is a totally even subset of $G$.

For the converse, we proceed by induction on $m' + n'$. The base case $m' = n' = 1$ is clear, and we go on to the induction step.

Suppose, for concreteness, that $m < n$. Let $G_1$ be the grid of size $m \times m$ and let $G_2$ be the translation copy of the grid of size $m \times (n - m)$ by $m$ units upwards. Then the vertex sets of $G_1$ and $G_2$ form a partitioning of the vertex set of $G$.

Let $E$ be a totally even subset of $G$. By the proof of Lemma \ref{basis}, there exists a totally even subset $E_1$ of $G_1$ such that $E$ and $E_1$ agree on the lower side of $G_1$. Then, as in the proof of Lemma \ref{u}, actually $E$ and $E_1$ agree on all of $G_1$. Since each vertex on the upper side of $G_1$ is incident with an even number of edges of $E_1$, it follows that all edges of $G$ between $G_1$ and $G_2$ must be outside of $E$.

Hence, the restriction $E_2$ of $E$ to $G_2$ is a totally even subset of $G_2$. By the induction hypothesis, there exists a totally even subset $E^\star$ of $G^\star$ such that $E_2$ coincides with the disjoint union of the copies of $E^\star$ in the family $\Phi_{d, d}(E^\star)$ which are contained within $G_2$. Then $E$ and $\Psi(E^\star)$ agree on the upper side of $G$. Consequently, by Lemma \ref{u}, in fact they agree on all of $G$ as well. \end{myproof}

We continue with a description of the possible sizes of the totally even subsets of $G$. Once again, we focus on the square case first and then we reduce the general rectangular case to it. Suppose, temporarily, that $m = n$ and $G$ is a square grid.

\begin{lemma} \label{4uv} The size of a totally even subset of the $n \times n$ grid is always of the form $4uv$ with $u + v = n$, for some nonnegative integers $u$ and $v$. Conversely, every number of this form does occur as the size of some such subset. \end{lemma}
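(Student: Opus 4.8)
This is immediate from the structure already in hand. As noted right after the definition of $B(i)$, the diamond $D(i)$ splits into $2i\cdot 2(n-i)$ small squares, each contributing one edge, so $|B(i)| = 4i(n-i)$, which is $4uv$ with $u=i$ and $v=n-i$. Letting $u$ run through $1,\dots,n-1$ realizes every value $4uv$ with $u+v=n$ and $u,v\ge 1$, while the empty set realizes $4uv=0$ (the cases $u=0$ or $v=0$). So all the work lies in the forward direction: showing that \emph{every} totally even set has a size of this form.

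\textbf{Diagonal coordinates.} The plan is to pass to diagonal coordinates, in which the diamonds become nested axis-aligned rectangles and the counting becomes transparent. I would send each point $(x,y)$ to $(u,w)=(x+y,\,x-y)$. A direct computation then gives the defining inequalities of $D(i)$ as the axis-aligned box $i+1\le u\le 2n+1-i$, $-i\le w\le i$, centered at $(n+1,0)$; as $i$ grows this box shortens in $u$ and widens in $w$. Each edge of $G$ becomes a diagonal of a unit cell of the integer $(u,w)$-lattice, and since $D(i)$ is convex with integer-aligned sides, the edge lies in $B(i)$ exactly when its whole cell lies in the $i$-th box. Writing the cell center as $(\bar u,\bar w)$ and setting $\ell_e=|\bar w|+\tfrac12$ and $r_e=n-\tfrac12-|\bar u-(n+1)|$ (both integers), this membership condition reads simply $\ell_e\le i\le r_e$.

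\textbf{The main obstacle: turning this into an exact partition of the edges.} This bookkeeping is the step I expect to require the most care. I would verify that the rotated image of the frame is the region $|u-(n+1)|+|w|\le n$, that a cell lies inside it precisely when $\ell_e\le r_e$, and that for each pair $1\le \ell\le r\le n-1$ the two independent sign choices in $\bar w=\pm(\ell-\tfrac12)$ and $\bar u-(n+1)=\pm(n-\tfrac12-r)$ produce exactly four edges, all with interval $[\ell,r]$. As a consistency check, summing $4$ over all such pairs gives $4\binom{n}{2}=2n(n-1)$, the total number of edges of $G$, confirming that this is a genuine partition. This fourfold grouping is exactly the orbit structure predicted by Lemma~\ref{symm}.

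\textbf{The count.} With the partition in place the rest is routine. For a totally even set $E$ with diamond decomposition (Lemma~\ref{basis}) indexed by $A\subseteq\{1,\dots,n-1\}$, an edge with interval $[\ell,r]$ lies in $E$ iff $|A\cap[\ell,r]|$ is odd, independently of the sign choices, so $|E|=4N(A)$ where $N(A)=\#\{(\ell,r):1\le \ell\le r\le n-1,\ |A\cap[\ell,r]|\ \text{odd}\}$. Introducing prefix counts $P(j)=|A\cap\{1,\dots,j\}|$ for $0\le j\le n-1$, the condition becomes $P(\ell-1)\not\equiv P(r)\pmod 2$, so $N(A)$ counts pairs $0\le a<b\le n-1$ with $P(a),P(b)$ of opposite parity. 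If $u$ and $v$ denote the numbers of indices $j\in\{0,\dots,n-1\}$ with $P(j)$ even and odd respectively, then $u+v=n$ and $N(A)=uv$, giving $|E|=4uv$ as desired.
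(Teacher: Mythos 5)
Your proof is correct, and while it rests on the same foundation as the paper's -- the diamond decomposition of Lemma~\ref{basis}, followed by a count that factors as $uv$ -- the counting itself is organised along genuinely different lines. The paper invokes Lemma~\ref{symm} to reduce to the lowermost triangle $\Theta$ of the frame, partitions $\Theta$ by the diamond boundaries into rectangles $R_{i, j}$ containing $b_ib_j$ edges each, determines which of these lie inside an odd number of diamonds, and factors the resulting sum with $u = b_1 + b_3 + \cdots$ and $v = b_2 + b_4 + \cdots$. You instead pass to diagonal coordinates, attach to each edge the interval $[\ell_e, r_e]$ of indices $i$ with $e \in B(i)$, show that each admissible interval is realised by exactly four edges, and count the intervals meeting the decomposition's index set an odd number of times via prefix parities; your $u$ and $v$ (the counts of even and odd prefix parities) coincide with the paper's alternating gap sums, since $P(j)$ is constant on each gap between consecutive $a_i$. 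Your route buys a cleaner combinatorial core -- the factorisation becomes the standard ``pairs of opposite prefix parity'' identity -- and turns the fourfold symmetry of Lemma~\ref{symm} into a byproduct rather than an input; the cost is the coordinate bookkeeping you rightly flag as the main obstacle, and your injectivity-plus-counting argument for the fourfold correspondence (distinct edges occupy distinct lattice cells, and $4\binom{n}{2} = 2n(n-1)$ matches the edge count) does close that gap. The paper's route avoids this bookkeeping by letting the geometry of the slanted strips do the work.
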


\begin{myproof} Sufficiency is straightforward: When both of $u$ and $v$ are positive, the diamond set $B(u)$ is of size $4uv$. We go on to necessity.

Let $E$ be a totally even subset of $G$. The frame $Q$ of $G$ is partitioned by its diagonals into four triangles. Since $E$ is axially symmetric with respect to both of these diagonals by Lemma \ref{symm}, we get that each one of these triangles contains exactly $1/4$ of the edges of~$E$. Hence, it suffices to count the edges of $E$ contained inside of one triangle. We choose the lowermost one, and we denote it by $\Theta$.

Let $E = B(a_1) \Delta B(a_2) \Delta \cdots \Delta B(a_k)$ be the diamond decomposition of $E$, with $a_1 < a_2 < \cdots < a_k$. For convenience, denote $D_i = D(a_i)$.

Let $s$ be the straight-line segment $(1/2, 1/2)$---$(n + 1/2, 1/2)$; this is the lower side of $Q$ as well as base of $\Theta$. The lowermost vertices of $D_1$, $D_2$, $\ldots$, $D_k$ partition $s$ into subsegments. Let these subsegments be $s_1$, $s_2$, $\ldots$, $s_{k + 1}$, in order from left to right. Let also $b_i$ be the length of $s_i$. So $b_1 = a_1$, $b_i = a_i - a_{i - 1}$ for all $2 \le i \le k$, and $b_{k + 1} = n - a_k$. Of course, $b_1 + b_2 + \cdots + b_{k + 1} = n$.

The unit-slope sides of $D_1$, $D_2$, $\ldots$, $D_k$ partition $\Theta$ into forward-slanting strips. Denote the strip containing $s_i$ by $\sigma_i$. Similarly, the sides with slope $-1$ of $D_1$, $D_2$, $\ldots$, $D_k$ partition $\Theta$ into backward-slanting strips. Denote the strip containing $s_i$ by $\tau_i$. So each one of $\sigma_i$ and $\tau_i$ is of width $b_i/\sqrt{2}$.

Let $R_{i, j} = \sigma_i \cap \tau_j$. Then the boundaries of $D_1$, $D_2$, $\ldots$, $D_k$ partition $\Theta$ into the regions $R_{i, j}$ with $i \le j$.

Observe that $R_{i, j}$ is contained inside of $D_h$ if and only if $i \le h < j$. On the other hand, an edge of $G$ is in $E$ if and only if it is contained inside of an odd number of diamonds out of $D_1$, $D_2$, $\ldots$, $D_k$. Hence, if $i \equiv j \pmod 2$, then no edges of $G$ contained inside of $R_{i, j}$ belong to $E$; and, if $i \not\equiv j \pmod 2$, then all edges of $G$ contained inside of $R_{i, j}$ belong to $E$.

In the latter case, $R_{i, j}$ is a rectangle with side-lengths $b_i/\sqrt{2}$ and $b_j/\sqrt{2}$. Thus it contains exactly $b_ib_j$ edges of $G$.

We conclude that the number of edges of $E$ in $\Theta$ equals the sum of $b_ib_j$ over all pairs of indices $i$ and $j$ such that $i < j$ and $i \not\equiv j \pmod 2$. Setting $u = b_1 + b_3 + b_5 + \cdots$ and $v = b_2 + b_4 + b_6 + \cdots$, this sum factors as $uv$. The total number of edges of $E$ then becomes $4uv$, as needed. \end{myproof}

The general rectangular case now follows.

\begin{corollary} \label{4uvmn} Let $m$ and $n$ be positive integers with $d = \gcd(m, n)$, $m = dm'$, and $n = dn'$. Then the size of a totally even subset of the $m \times n$ grid is always of the form $4uvm'n'$ with $u + v = d$, for some nonnegative integers $u$ and $v$. Conversely, every number of this form does occur as the size of some such subset. \end{corollary}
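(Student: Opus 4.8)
The plan is to reduce the general rectangular case directly to the square case established in Lemma \ref{4uv}, using the structural description of totally even subsets from Lemma \ref{ter}. By Lemma \ref{ter}, every totally even subset $E$ of the $m \times n$ grid $G$ arises as $\Psi(E^\star)$ for a unique totally even subset $E^\star$ of the $d \times d$ grid $G^\star$, and this correspondence is a bijection respecting the $\mathbb{F}_2$-vector-space structure. So the entire problem is to understand how $\Psi$ acts on sizes.

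The key step is to count how many copies of $E^\star$ make up $\Psi(E^\star)$. The copies live in the family $\Phi_{d, d}(E^\star)$ of reflected images, and $\Psi(E^\star)$ is the union of those copies that land inside $G$. Since $G$ is an $m \times n = dm' \times dn'$ grid, it is tiled by reflected copies of the fundamental $d \times d$ block in an $m' \times n'$ array, so exactly $m'n'$ copies of $E^\star$ fit inside $G$, and they are pairwise edge-disjoint. Here I would want to check carefully that each reflected copy of $E^\star$ is itself a valid totally even pattern occupying one $d \times d$ block and that distinct blocks share no edges of $E$ — the latter is essentially guaranteed by the argument in the proof of Lemma \ref{ter}, which shows that all edges of $G$ lying between adjacent blocks must be absent from any totally even set. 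Granting this, $|\Psi(E^\star)| = m'n' \cdot |E^\star|$.

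Now I invoke Lemma \ref{4uv} for the square grid $G^\star$ of size $d \times d$: its totally even subsets have size exactly $4uv$ with $u + v = d$, and every such value is attained. Multiplying through by the block count $m'n'$ gives $|E| = 4uvm'n'$ with $u + v = d$, which is precisely the claimed necessity. For the converse, given any $u, v$ with $u + v = d$, Lemma \ref{4uv} supplies a totally even subset $E^\star$ of $G^\star$ of size $4uv$ (for instance the diamond set $B(u)$ when $u, v$ are both positive, and the empty set when one of them is $0$), and then $\Psi(E^\star)$ is a totally even subset of $G$ of the required size $4uvm'n'$.

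The main obstacle I anticipate is not the size arithmetic, which is immediate, but pinning down precisely that $\Psi(E^\star)$ consists of exactly $m'n'$ edge-disjoint copies. One must verify that the reflection group $\Phi_{d, d}$ tiles the plane so that exactly $m'n'$ of the orbit representatives fall within the $m \times n$ frame, and that the disjoint-union description in Lemma \ref{ter} genuinely yields disjointness at the level of edge sets rather than mere set-theoretic union with possible overlaps along shared grid lines. Since Lemma \ref{ter} already asserts $\Psi(E^\star)$ is a \emph{disjoint} union and its proof establishes the absence of between-block edges, this should follow with only a brief verification, so the corollary is genuinely a short consequence of the two preceding results.
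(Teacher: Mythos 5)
Your proposal is correct and is exactly the paper's argument: the paper's entire proof reads ``By Lemmas \ref{ter} and \ref{4uv},'' and you have simply spelled out the details (the $m'n'$ edge-disjoint copies and the size arithmetic) that the paper leaves implicit. The disjointness concern you flag is indeed already settled inside the proof of Lemma \ref{ter}, so nothing further is needed.
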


\begin{myproof} By Lemmas \ref{ter} and \ref{4uv}. \end{myproof}

Since kernel sets are of independent interest outside of their connection with signatures, we state the analogue of Lemma \ref{4uv} and Corollary \ref{4uvmn} in this setting explicitly.

\begin{theorem} \label{kss} Let $M$ and $N$ be positive integers with $d = \gcd(M + 1, N + 1)$, $M + 1 = dM'$, and $N + 1 = dN'$. Then the size of a kernel subset of either the even or the odd vertices of the $M \times N$ grid is always of the form $uvM'N'$ with $u + v = d$, for some nonnegative integers $u$ and $v$. Furthermore, when it is a subset of the odd vertices of the grid and $d$ is even, both of $u$ and $v$ must be even as well. Conversely, every number which satisfies these conditions does occur as the size of some such subset. \end{theorem}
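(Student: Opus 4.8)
The plan is to mirror the whole development of Section \ref{even} — reduction to a square grid, a diamond basis (Lemma \ref{basis}), the symmetry result (Lemma \ref{symm}), and the geometric size count of Lemma \ref{4uv} — but now for \emph{vertex} subsets, while feeding the cases already covered by the isomorphism of Section \ref{even} straight into Corollary \ref{4uvmn}. Write $d = \gcd(M + 1, N + 1)$. The kernel subsets of the odd vertices and of the even vertices are disjoint problems, and in fact the kernel of the full adjacency matrix splits as their direct sum (since the grid is bipartite, the equation $Ax = 0$ decouples into one condition on the odd part and one on the even part), which also recovers the dimensions from the cited adjacency nullity $d - 1$. A further simplification is that, once we pass to the square block, a reflection reversing a single coordinate swaps the two color classes exactly when that block is even-by-even, i.e.\ when $d$ is odd; so the even- and odd-vertex problems coincide for $d$ odd, and the genuine dichotomy is confined to $d$ even.

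I would then establish the rectangular-to-square reduction, the vertex-subset analogue of Lemmas \ref{u} and \ref{ter}. A kernel subset is determined by its restriction to any one side of the grid, propagated across exactly as in the proof of Lemma \ref{u}, and the reflections that tile the $M \times N$ grid by $M'N'$ copies of the $(d - 1) \times (d - 1)$ block carry kernel subsets of the block to kernel subsets of the whole grid. This reduces everything to the square $(d - 1) \times (d - 1)$ grid and produces the factor $M'N'$, each block carrying one copy of a base subset. The care point is that these tiling reflections are spaced by $d - 1$, so for $d$ even (the block is odd-by-odd) they swap the color classes between adjacent copies, whereas for $d$ odd they preserve them; tracking this is where the two regimes begin to separate.

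On the square $(d - 1) \times (d - 1)$ grid I would build a diamond basis and count sizes by the triangle argument of Lemma \ref{4uv}. As there, the basis subsets are supported on nested concentric diamonds; the new feature is that for one color class the diamonds pass through lattice points and for the other through cell centers, so the number of chosen vertices inside each rectangular region $R_{i, j}$ is $b_i b_j$ in the first case but $4 b_i b_j$ in the second. Summing over the lowermost triangle and invoking the symmetry analogue of Lemma \ref{symm} yields $uv$ in one case and $4 u'v'$ in the other, with $u + v = d$, which is exactly the assertion, the factor $4$ being precisely the source of the forced divisibility by $2$ of both $u$ and $v$ in the odd-vertex, even-$d$ case. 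The odd-vertex half here is already contained in Corollary \ref{4uvmn} through the isomorphism of Section \ref{even}, so the genuinely new work is the other diamond family.

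The main obstacle is that second family — the even-vertex kernel subsets of an odd-by-odd grid (and, under the color-swap symmetry above, either family on an even-by-even grid), which do \emph{not} arise as totally even edge subsets of any grid and so lie outside the reach of Corollary \ref{4uvmn}. For these I must exhibit the diamonds explicitly (in the $3 \times 3$ grid they are the two main diagonals, and in general the even-vertex sets cut out by concentric lattice-point diamonds), verify that they are kernel subsets and that they span, again via the one-side determination of Lemma \ref{u}, and confirm that the triangle count produces $uv$ with no spurious factor, so that every parity of $u$ and $v$ really occurs. The converse direction is then immediate: the values $4 u'v'$ come from Corollary \ref{4uvmn}, and the remaining values are realized by a single diamond subset or an explicit symmetric difference of diamond subsets of the prescribed size. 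The most delicate thread throughout is the bookkeeping of the parities of the gap lengths $b_i$ along a side of the block under the color-swapping reflections, since this is what pins down exactly when the ``both $u$ and $v$ even'' restriction is forced and when it is not.
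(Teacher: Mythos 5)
Your plan is the route the paper intends: the paper prints no proof of Theorem \ref{kss}, saying only that it is analogous to Lemma \ref{4uv} and Corollary \ref{4uvmn}, and your proposal --- one-side propagation as in Lemma \ref{u}, reduction to a square block as in Lemma \ref{ter}, a diamond basis for each colour class, and the triangle count of Lemma \ref{4uv} --- is exactly that analogue. You also correctly isolate the genuinely new ingredient, namely the second diamond family (your $3 \times 3$ example, the two main diagonals, is right).

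Two of your intermediate claims are false, though neither wrecks the plan. First, the copies of the $(d-1) \times (d-1)$ block inside the $M \times N$ grid are separated by single gap rows and columns, so the tiling reflections are about the gridlines $x = jd$ and $y = jd$ (spacing $d$, not $d-1$); reflection about an integer gridline sends $(x, y)$ to $(2jd - x, y)$ and preserves the parity of $x + y$ for every $d$. So the colour classes are \emph{never} swapped between adjacent copies, the ``care point'' dissolves, and no regime separation happens at the tiling stage. (Reflections about the half-integer frame lines of a block, which is what ``spaced by $d-1$'' suggests you have in mind, would land on the gap column and are not the right tiling maps here.) Second, the per-region counts do not differ by a factor of $4$ between the two classes: in both cases the total over the block is $uv$, where $u$ and $v$ are the alternating sums of the gaps $b_i$ that the diamond positions cut out of a side of length $d$ of the bounding square. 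A sanity check: on the $3 \times 3$ grid ($d = 4$) the even-vertex kernel sizes are $0, 3, 3, 4$ and the odd-vertex sizes are $0, 4$, which are not in ratio $4 : 1$ either way. The ``both $u$ and $v$ even'' restriction in the odd-vertex, even-$d$ case comes solely from the fact that the odd-class diamonds sit at the \emph{even} positions of that side, so every gap $b_i$ is even; the even-class diamonds sit at odd positions, the two extreme gaps are odd, and single diamonds together with pairs of diamonds then realise every $u$ with $0 \le u \le d$. Since you do name the gap-parity bookkeeping as the decisive point in your last sentence, the correct mechanism is already in your write-up; the $b_i b_j$ versus $4 b_i b_j$ dichotomy should simply be deleted rather than repaired.
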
 

Notice that this is slightly stronger than the original Corollary \ref{4uvmn}. Our isomorphism between totally even sets and kernel sets only involves the kernel subsets of the odd vertices of the grids with odd width and odd height. Still, the proof of the full Theorem~\ref{kss} is analogous to the proofs of Lemma \ref{4uv} and Corollary \ref{4uvmn}; the generalisation does not pose substantial additional difficulties.

\section{Constructions} \label{const}

Here we collect some useful constructions.

\begin{lemma} \label{coin} Consider two cycles on a rectangular grid with the same signature. Suppose that they agree on some side of the grid. Then they coincide. \end{lemma}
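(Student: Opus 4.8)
The plan is to reduce this statement to Lemma \ref{u} by passing from the cycles to their difference. Let $C_1$ and $C_2$ be the two cycles, and set $E = C_1 \Delta C_2$. Since $C_1$ and $C_2$ have the same signature, their difference $E$ is a totally even set, as observed at the start of Section \ref{even}. So I immediately land inside the setting of Lemma \ref{u}.

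Next I would translate the hypothesis into a statement about $E$. To say that $C_1$ and $C_2$ agree on a side $s$ means that every edge of $s$ lies in both cycles or in neither; equivalently, no edge of $s$ lies in the symmetric difference $E$. In other words, $E$ contains no edge of $s$.

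The key step is then to compare $E$ against the empty set, which is trivially a totally even subset of $G$. Both $E$ and $\emptyset$ are totally even, and both contain no edge of $s$, so they agree on the side $s$. By Lemma \ref{u}, they must agree on all of $G$, which forces $E = \emptyset$. Hence $C_1 \Delta C_2 = \emptyset$, that is, $C_1 = C_2$, as desired.

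There is no substantial obstacle here: the result is essentially a direct corollary of Lemma \ref{u}, and the only thing to watch is the bookkeeping in the translation between ``the two cycles agree on $s$'' and ``their difference set omits every edge of $s$,'' together with the observation that the empty set qualifies as a totally even set against which to apply Lemma \ref{u}.
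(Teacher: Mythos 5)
Your proof is correct and is exactly the paper's argument: the paper proves Lemma \ref{coin} in one line by applying Lemma \ref{u} to the empty set and the difference $C_1 \Delta C_2$, which is precisely the reduction you spell out. No issues.
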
 

\begin{myproof} By Lemma \ref{u}, applied to the empty set and the difference between the two cycles. \end{myproof}

\begin{lemma} \label{side} Let $C_1$ and $C_2$ be two distinct cycles with the same signature on the rectangular grid $G$. Then each side of $G$ contains an edge which belongs to both of $C_1$ and $C_2$. \end{lemma}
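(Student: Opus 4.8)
The plan is to argue by contradiction, working entirely locally at a single cell once the right cell has been chosen. By the symmetry of the grid it suffices to treat the lower side. So suppose that no edge of the lower side belongs to both \(C_1\) and \(C_2\). Since \(C_1 \neq C_2\), Lemma \ref{coin} tells us that \(C_1\) and \(C_2\) do not agree on the lower side, and hence at least one lower-side edge lies in \(E = C_1 \Delta C_2\); recall that every edge of \(E\) lies in exactly one of the two cycles. First I would pick out the \emph{leftmost} lower-side edge \(e^\star\) that lies in \(E\), say \(e^\star \in C_1 \setminus C_2\). Every lower-side edge to the left of \(e^\star\) then lies in neither cycle: it is not in \(E\) by minimality of \(e^\star\), and it is not in \(C_1 \cap C_2\) by our standing assumption.

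The key observation I would use is a reformulation of the signature condition at a single cell: two cycles have equal clue at a face \(F\) if and only if \(F\) has exactly as many edges in \(C_1 \setminus C_2\) as in \(C_2 \setminus C_1\). I apply this to the lowermost cell \(F^\star\) whose bottom edge is \(e^\star\); denote its left, right, and top edges by \(l\), \(r\), and \(u\). First I would pin down two edges of \(F^\star\) in \(C_1 \setminus C_2\). One is \(e^\star\) itself. For the other, examine the lower-left corner \(w\) of \(F^\star\): its incident edges are \(e^\star\), the lower-side edge just to its left (which lies in neither cycle, or is absent when \(w\) is the grid corner), and \(l\). The \(2\)-regularity of \(C_1\) at \(w\) forces \(l \in C_1\), while the \(2\)-regularity of \(C_2\) at \(w\) forces \(l \notin C_2\); thus \(l \in C_1 \setminus C_2\) as well.

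Now the clue balance at \(F^\star\) must supply two edges of \(F^\star\) in \(C_2 \setminus C_1\) to match \(e^\star\) and \(l\). The only candidates are \(r\) and \(u\), so both must lie in \(C_2 \setminus C_1\). I would then derive the contradiction from \(r\): since \(r\) separates \(F^\star\) from the next lower cell \(F^+\) (or from the exterior), membership of \(r\) in \(C_2 \setminus C_1\) forces the status of \(F^+\) on both sides, and a short case check shows this pushes the bottom edge of \(F^+\) into both \(C_1\) and \(C_2\) — a common lower-side edge, contradicting our assumption. The corner case where \(F^\star\) is the rightmost lower cell is even easier, as then \(r\) cannot belong to \(C_2 \setminus C_1\) at all, leaving fewer than two compensating edges and contradicting the balance directly.

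The step I expect to be the main obstacle — or at least the one requiring the most care — is the bookkeeping in the final paragraph: getting the clue-balance reformulation exactly right and checking that a \(C_2 \setminus C_1\) edge on the bottom row can only be produced by forcing an adjacent lower cell to be enclosed by both cycles. Everything else is routine once \(e^\star\) and \(F^\star\) are correctly chosen. I would also remark that, since the argument only uses local \(2\)-regularity and the per-face clue balance, it applies verbatim to weak solutions, so Lemma \ref{side} extends to pseudocycles and pseudosignatures without change.
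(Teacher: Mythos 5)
Your proof is correct and takes essentially the same route as the paper's: pick the leftmost lower-side edge of $C_1 \Delta C_2$, use $2$-regularity at its left endpoint to force the adjacent vertical edge into the same difference class, and then use the clue balance at the cell above to force the next bottom edge into both cycles. The paper argues directly in two cases rather than by contradiction; your standing assumption that no lower-side edge is common simply eliminates its Case 1, and the rest (including the corner cases) matches the paper's local analysis.
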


\begin{myproof} Consider, for concreteness, the lower side of $G$. By Lemma \ref{coin}, some edge on it is in $E = C_1 \Delta C_2$. Let $k$ be the smallest positive integer such that $e_1 = (k, 1)$---$(k + 1, 1)$ is in $E$. Then $e_2 = (k, 1)$---$(k, 2)$ must be in $E$, too.

\smallskip

\emph{Case 1}. One of $e_1$ and $e_2$ is part of $C_1$ and the other one is part of $C_2$. Then $k \ge 2$ and the edge $(k - 1, 1)$---$(k, 1)$ belongs to both of $C_1$ and $C_2$.

\smallskip

\emph{Case 2}. Both of $e_1$ and $e_2$ are part of the same cycle, without loss of generality $C_1$. Then the other two edges of the cell $(k + 1/2, 3/2)$ must be part of $C_2$. Consequently, $k \le m - 2$ and the edge $(k + 1, 1)$---$(k + 2, 1)$ belongs to both of $C_1$ and $C_2$. \end{myproof}

\begin{lemma} \label{lift} Suppose that there exist two distinct cycles on the $m \times n$ grid with the same signature and a difference of size $s$. Let $a$ and $b$ be any positive integers. Then there exist two distinct cycles on the $am \times bn$ grid with the same signature and a difference of size $abs$. \end{lemma}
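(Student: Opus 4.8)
The plan is to tile the $am \times bn$ grid by $ab$ copies of the $m \times n$ grid and to assemble the two target cycles from reflected copies of the given cycles, splicing the pieces together only at the very end. Write $G$ for the $m \times n$ grid and $\Gamma$ for the $am \times bn$ grid, and let $E = C_1 \Delta C_2$, so $|E| = s$. The images of $G$ in the family $\Phi_{m, n}(G)$ that lie inside $\Gamma$ partition the vertex set of $\Gamma$ into $ab$ blocks $B_{i, j}$, with $0 \le i < a$ and $0 \le j < b$, where $B_{i, j}$ occupies columns $im + 1, \ldots, im + m$ and rows $jn + 1, \ldots, jn + n$. Any two adjacent blocks are mirror images of one another across their common boundary line, and consecutive blocks are joined in $\Gamma$ by a row or column of connecting cells.

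I first build two weak solutions. In each block $B_{i, j}$, place the image under the motion of $\Phi_{m, n}$ taking $B_{0, 0}$ to $B_{i, j}$ of the cycle $C_1$ if $i + j$ is even and of $C_2$ if $i + j$ is odd; call the resulting pseudocycle $\tilde{C}_1$. Let $\tilde{C}_2$ be produced by the opposite choice in every block, so that within each block $\tilde{C}_1$ and $\tilde{C}_2$ differ by a reflected copy of $E$. Consequently $\tilde{C}_1 \Delta \tilde{C}_2$ is the disjoint union of $ab$ reflected copies of $E$, of total size $ab \cdot s$.

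The key step is to verify that $\tilde{C}_1$ and $\tilde{C}_2$ share a signature; equivalently, that every finite face of $\Gamma$ contains as many edges of $\tilde{C}_1 \setminus \tilde{C}_2$ as of $\tilde{C}_2 \setminus \tilde{C}_1$. For a face interior to a block this is immediate, since there it reduces to the corresponding statement for $C_1$ and $C_2$, which holds as they have the same signature. For a connecting cell between two adjacent blocks, the cell meets $\tilde{C}_1 \Delta \tilde{C}_2$ only in the two edges it shares with the blocks, one from each; its other two edges are the transverse connectors, which lie in neither pseudocycle. Because the two blocks are mirror images and carry opposite choices of $C_1$ versus $C_2$, a direct inspection shows that whenever one of these two edges lies in $\tilde{C}_1 \setminus \tilde{C}_2$ the other lies in $\tilde{C}_2 \setminus \tilde{C}_1$, and conversely, so the counts balance. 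I expect this boundary bookkeeping to be the main obstacle, for it is exactly here that both ingredients are needed: the reflection makes the two boundary edges sit at matching heights, while the role-swap gives them opposite colours. With either ingredient alone the connecting-cell clues fail to match.

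It remains to splice the $ab$ component cycles of each pseudocycle into a single cycle without disturbing the signature or the difference. By Lemma \ref{side}, each side of $G$ carries an edge common to $C_1$ and $C_2$; under the reflections these common edges line up across every block boundary and, being common, appear in both $\tilde{C}_1$ and $\tilde{C}_2$. At one such pair of aligned edges, flanking a single connecting cell, perform the standard surgery that deletes the two boundary edges and inserts the two transverse edges of that cell, thereby merging the two incident component cycles into one while preserving $2$-regularity. Carrying out one such surgery for each edge of a spanning tree of the block-adjacency grid joins all components into a single cycle; the surgeries act on pairwise edge-disjoint sets (distinct boundaries, and distinct sides of any shared block), so they do not interfere. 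Since every surgery is applied identically to $\tilde{C}_1$ and $\tilde{C}_2$, it alters each clue of the two by the same amount and, deleting and inserting edges common to both, leaves their difference untouched. The resulting cycles $C_1'$ and $C_2'$ are therefore distinct, share a signature, and differ in exactly $ab \cdot s$ edges, as required.
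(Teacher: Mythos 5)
Your argument is correct and takes essentially the same route as the paper's own proof: the same checkerboard tiling of the $am \times bn$ grid by reflected copies of $C_1$ and $C_2$ under $\Phi_{m, n}$, the same clue bookkeeping at the cells between adjacent tiles, and the same switch-flipping along a spanning tree of the tile-adjacency graph, justified by Lemma \ref{side}. The only detail the paper treats explicitly that you pass over is the cell adjacent by corner to four tiles, which contains no edges of either pseudocycle and is therefore trivially unproblematic.
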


\begin{myproof} Let $G$ be the $m \times n$ grid and let $C_1$ and $C_2$ be two cycles on it as specified. Let also $H$ be the $am \times bn$ grid. There are $ab$ copies of $G$ in $\Phi_{m, n}(G)$ which are subgraphs of $H$. We call them \emph{tiles}.

Inside of each tile $\varphi(G)$ with $\varphi \in \Phi_{m, n}$, we place the cycle $\varphi(C_1)$ when $\varphi$ is orientation-preserving and the cycle $\varphi(C_2)$ when $\varphi$ is orientation-reversing. (So the tiles with a copy of $C_1$ and the tiles with a copy of $C_2$ form a checkerboard pattern.) The union of all such cycles is a pseudocycle in $H$ which we denote by $P_1$. Construct $P_2$ similarly, except that ``orientation-preserving'' and ``orientation-reversing'' are swapped in the definition.

Observe that $P_1$ and $P_2$ have the same signature on $H$. Indeed, let $c$ be any cell of~$H$. There are the following cases to consider:

\smallskip

\emph{Case 1}. The cell $c$ lies inside of some tile. Then $P_1$ and $P_2$ contain the same number of edges of $c$ because the signatures of $C_1$ and $C_2$ on $G$ coincide.

\smallskip

\emph{Case 2}. The cell $c$ lies between two tiles so that it is adjacent by side to both of them. Then the two edges of $c$ which lie on the sides of these tiles are images of the same edge $e$ of $G$ under the two corresponding motions of $\Phi_{m, n}$. When $e \in C_1 \Delta C_2$, each one of $P_1$ and $P_2$ contains one edge of $c$. Otherwise, when $e \not\in C_1 \Delta C_2$, clearly $P_1$ and $P_2$ contain the same number of edges of $c$.

\smallskip

\emph{Case 3}. The cell $c$ lies between four tiles so that it is adjacent by corner to all of them. Then neither one of $P_1$ and $P_2$ contains any edges of $c$.

\smallskip

Observe also that $P_1 \Delta P_2$ is of size $abs$, as it is simply the disjoint union of $ab$ copies of $C_1 \Delta C_2$ under the corresponding motions of $\Phi_{m, n}$.

We are only left to modify $P_1$ and $P_2$ so that they become true cycles instead of pseudocycles.

\begin{figure}[ht] \null \hfill \begin{subfigure}{125pt} \centering \includegraphics{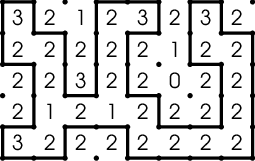} \caption{} \label{lift-a} \end{subfigure} \hfill \begin{subfigure}{125pt} \centering \includegraphics{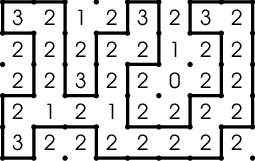} \caption{} \label{lift-b} \end{subfigure} \hfill \null \caption{} \label{lift-fig} \end{figure}

Consider any pair of tiles $G_1$ and $G_2$ which are neighbours by side. Let $t_1$ and $t_2$ be their neighbouring sides. Then $t_1$ and $t_2$ are images of the same side $t$ of $G$ under the two corresponding motions of $\Phi_{m, n}$. By Lemma \ref{side}, there exists some edge of $G$ on $t$ which belongs to both of $C_1$ and $C_2$. Let $e_1$ and $e_2$ be its corresponding edges in $G_1$ and $G_2$, and let $w$ be the cell of $H$ between $G_1$ and $G_2$ which contains $e_1$ and $e_2$.

We call $w$ a \emph{switch} for $G_1$ and $G_2$. To \emph{flip} the switch $w$ is to delete $e_1$ and $e_2$ from $P_1$ and $P_2$ and to replace them with the other pair of opposite edges of $w$. Flipping just one switch in isolation has the effect of splicing together the copies of $C_1$ and $C_2$ inside of $G_1$ and $G_2$, for each one of $P_1$ and $P_2$.

Clearly, by flipping some switches in an appropriate manner, we can splice together all of the cycles of each one of $P_1$ and $P_2$ so that both of $P_1$ and $P_2$ become true cycles. Explicitly, one way to do this would be as follows: Fix any spanning tree of the graph whose vertices are the tiles and whose edges join the pairs of tiles which are neighbours by side. Then, for each edge of that tree, flip one switch between the two tiles joined by that edge.

For example, Figure \ref{lift-fig} shows this construction with $m = n = 3$, $a = 3$, $b = 2$, and $C_1$ and $C_2$ being the two cycles in Figures \ref{amb-a} and \ref{amb-b}, respectively. \end{myproof}

\begin{lemma} \label{cs} Let $n$ be a positive integer. Let also $u$ and $v$ be positive integers with $u$ even and $u + v = n$. Then there exist two cycles on the $n \times n$ grid with the same signature and a difference of size $4uv$. \end{lemma}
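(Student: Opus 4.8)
The plan is to exhibit the two cycles explicitly and to take their difference to be the diamond set $B(u)$, which has size exactly $4uv$ by Lemma \ref{4uv}. Since the difference $E = B(u)$ is prescribed in advance, building $C_1$ and $C_2$ reduces to splitting the edges of $B(u)$ into two classes $E_1$ and $E_2$ and adjoining a common set $S$ of shared edges, so that $C_1 = S \cup E_1$ and $C_2 = S \cup E_2$; here $S$ must be disjoint from $B(u)$, because an edge lying in both cycles cannot lie in their difference. The crucial point is that asking for $C_1$ and $C_2$ to have the \emph{same signature} is strictly stronger than asking for $E$ to be totally even: it requires every cell to contain equally many edges of $E_1$ as of $E_2$, so that both cycles traverse each cell the same number of times.

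The first step is to choose the split so that this per-cell balance is automatic, by splitting $B(u)$ according to edge direction: put the horizontal edges into $E_1$ and the vertical edges into $E_2$. Since $B(u)$ is totally even, each cell meets $D(u)$ in $0$, $2$, or $4$ of its edges, and because the sides of $D(u)$ have slope $\pm 1$, a cell meeting it in two edges is always clipped into an ``L'', that is, one horizontal and one vertical edge. Hence every cell receives equally many horizontal and vertical edges of $B(u)$, and so $C_1$ and $C_2$ share a common signature. The shared set $S$ is then essentially forced: at each vertex where $B(u)$ contributes an ``L'' the directional split leaves odd degree, so $S$ must supply one further edge there, and these extra edges run along the staircase boundary of $D(u)$, bringing every vertex up to even degree in both $C_1$ and $C_2$ simultaneously. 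At this stage the size ($=4uv$) and the equality of signatures are both settled, and the two cycles are visibly distinct since they disagree throughout the interior of $D(u)$.

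The main obstacle is the remaining requirement that $C_1$ and $C_2$ be genuine cycles, that is, connected as well as $2$-regular. A purely directional split tends to produce a disjoint union of loops -- a pseudocycle -- rather than a single closed loop, and reconnecting these loops into one is exactly where the hypothesis that $u$ is even is used. I expect to resolve this by recording the precise component structure of the two pseudocycles produced by the split and then merging their loops, either by adjusting the split along the principal diagonal of $D(u)$ or by the switch-flipping argument from the proof of Lemma \ref{lift}, using that each side of the diamond carries a shared edge (as guaranteed, in the ambient sense, by Lemma \ref{side}); crucially, such a splice is performed identically on $C_1$ and $C_2$, so it leaves the difference and the per-cell balance untouched. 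The number of component loops has a parity governed by $u$, and $u$ even is what lets these loops be joined consistently into a single cycle for each of $C_1$ and $C_2$. It is probably cleanest to display the resulting pair of cycles in a figure and leave the final verification of the signature as a routine cell-by-cell inspection.
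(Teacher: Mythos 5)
Your opening reductions are fine (taking the difference to be $B(u)$, and noting that equal signatures force each cell to contain equally many $E_1$- as $E_2$-edges, which your horizontal/vertical split does achieve), but the directional split you build everything on is infeasible, not merely hard to make connected. Any common set $S$ completing $E_1$ (the horizontal edges of $B(u)$) and $E_2$ (the vertical ones) to $2$-regular graphs is severely constrained: at a vertex of $G$ interior to $D(u)$ all four incident edges lie in $B(u)$, so $S$ must avoid it, while at a vertex of $G$ on the boundary of $D(u)$ exactly two incident edges (one horizontal, one vertical) lie in $B(u)$, so $S$ must contribute exactly one edge there. Since the part of the grid outside $D(u)$ falls apart into four corner regions, each touching only one side of the diamond, every component of such an $S$ is a path joining two boundary vertices on the same side of $D(u)$. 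Hence each side of the diamond must carry an even number of grid vertices. The south-west and north-east sides carry $u$ each, but the south-east and north-west sides carry $v$ each, and the lemma must cover odd $v$ --- e.g.\ $n = 7$, $u = 4$, $v = 3$, exactly the paper's illustrated case --- where no admissible $S$ exists at all, so your split does not even yield a pair of pseudocycles with the required balance. For $v = 1$ the failure is immediate: both edges at the grid corner $(n, 1)$ lie in $B(u)$, one of each direction, so that vertex has degree $1$ in $S \cup E_1$ whatever $S$ is. So the hypothesis that $u$ is even is not what ``reconnects the loops'' of a directional split; the split itself is wrong. Your fallback tools do not repair this: Lemma \ref{side} presupposes two cycles already known to share a signature, and a switch flip as in Lemma \ref{lift} exchanges a pair of common edges, which can merge components but cannot remove a parity obstruction to the existence of $S$.

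What the paper does instead is split $B(u)$ unevenly with respect to direction: inside each odd cell contained in $D(u)$, $C_1$ takes two adjacent edges (lower and right, or left and upper, according to the cell's residue mod $4$), and $C_2 = C_1 \Delta B(u)$ takes the complementary pair. This still gives the per-cell balance, but now the odd-degree vertices of the partial configuration appear only along the south-west and north-east sides of $D(u)$ --- the two sides carrying $u$ vertices each --- and evenness of $u$ is used exactly there, to pair those vertices by short two-edge paths just outside the diamond together with one long path through the corner $(1,1)$, producing a single cycle $C_1$ and, symmetrically, a single cycle $C_2$. To salvage your plan you would have to replace the horizontal/vertical split by a split of this kind; as written, the argument fails for every odd $v$.
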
 

\begin{myproof} Let $G$ be the $n \times n$ grid. Consider the diamond subset $B(u)$ of $G$. We will construct two cycles $C_1$ and $C_2$ on $G$ with the same signature and difference $B(u)$.

Let $p_1$, $p_2$, $\ldots$, $p_u$ be the vertices of $G$ on the south-west side of the diamond $D(u)$, in order as we go south-east, so that $p_i = (i, u - i + 1)$. Similarly, let $q_1$, $q_2$, $\ldots$, $q_u$ be the vertices of $G$ on the north-east side of $D(u)$, in order as we go south-east, so that $q_i = (n - u + i, n - i + 1)$.

For each odd cell $(x, y)$ of $G$ contained inside of $D(u)$, take its lower and right edges when $\lfloor x \rfloor + \lfloor y \rfloor \equiv 1 \pmod 4$, and its left and upper edges otherwise, when $\lfloor x \rfloor + \lfloor y \rfloor \equiv 3 \pmod 4$. Construct also the path $p_1{\sim}(1, 1){\sim}p_u$; the paths $p_i$---$(i, u - i)$---$p_{i + 1}$ for all even $i$ with $2 \le i < u$; and the paths $q_i$---$(n - u + i + 1, n - i + 1)$---$q_{i + 1}$ for all odd $i$.

\begin{figure}[ht] \null \hfill \begin{subfigure}{95pt} \centering \includegraphics{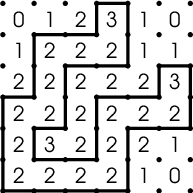} \caption{} \label{cs-a} \end{subfigure} \hfill \begin{subfigure}{95pt} \centering \includegraphics{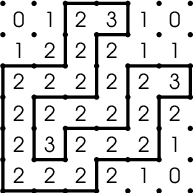} \caption{} \label{cs-b} \end{subfigure} \hfill \null \caption{} \label{cs-fig} \end{figure}

Finally, let $C_1$ be the resulting cycle on $G$ and let $C_2 = C_1 \Delta B(u)$.

For example, Figure \ref{cs-fig} shows this construction with $n = 7$, $u = 4$, and $v = 3$. \end{myproof}

\begin{lemma} \label{c22} Let $m$ and $n$ be positive integers such that $m \ge 4$, $n \ge 4$, both of $m$ and $n$ are even, and at least one of $m$ and $n$ is divisible by $4$. Then there exist two cycles on the $m \times n$ grid with the same signature and a difference of size $mn$. \end{lemma}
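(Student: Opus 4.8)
The plan is to recognise the target $mn$ as the \emph{largest} difference size possible and then to assemble it from small pieces. Since $m$ and $n$ are both even, $d = \gcd(m, n)$ is even, so by Corollary \ref{4uvmn} the size of any totally even subset of the $m \times n$ grid has the form $4uvm'n'$ with $u + v = d$; this is maximised at $u = v = d/2$, where it equals $4(d/2)^2 m'n' = mn$. Thus $mn$ is exactly the maximum possible size of a difference, and the task is to exhibit two genuine cycles that attain it. Transposing the grid if necessary, I may assume $4 \mid m$.

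Suppose first that $4 \mid n$ as well. Then $4 \mid d$, so $d/2$ is even, and Lemma \ref{cs} applied to the $d \times d$ grid with $u = v = d/2$ produces two cycles there with the same signature and difference of size $4(d/2)^2 = d^2$. Feeding these into Lemma \ref{lift} with the factors $(m', n')$ yields two cycles on the $m \times n$ grid with the same signature and difference $m'n' d^2 = mn$, as required.

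Suppose instead that $n \equiv 2 \pmod 4$. Now the square reduction is unavailable, since $d \equiv 2 \pmod 4$ forces $d/2$ to be odd and Lemma \ref{cs} no longer reaches the maximal diamond. Instead I would apply Lemma \ref{lift} with the factors $(m/4, 1)$: a pair of cycles on the $4 \times n$ grid with difference $4n$ lifts to a pair on the $m \times n$ grid with difference $(m/4)\cdot 4n = mn$. It therefore suffices to treat the $4 \times n$ grid with $n \equiv 2 \pmod 4$. By Lemma \ref{ter} the space of totally even subsets of this grid is one-dimensional, spanned by the image under $\Psi$ of the single diamond of the $2 \times 2$ grid; this unique nonempty totally even set $E$ works out to the union of the $n$ unit cells lying in the leftmost and rightmost cell-columns at every other row. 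These $n$ cells are pairwise non-adjacent, so $E$ is a disjoint union of $n$ unit squares. I would then exhibit two cycles $C_1$ and $C_2 = C_1 \Delta E$ on the $4 \times n$ grid directly, threading two interlocking loops through this brick pattern so that every cell receives the same clue from $C_1$ as from $C_2$.

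The main obstacle is precisely this $4 \times n$ construction. Matching the signatures is the easy half: a short count shows that $C_1$ and $C_2$ agree at every cell exactly when $C_1$ contains half of the edges of $E$ incident to each face — two of the four edges of each square and one of the two $E$-edges of each intervening cell — which can be arranged square by square. The genuine difficulty is global connectivity: the chosen edges, together with connectors running through the middle cell-column and through the vertical gaps between squares, must assemble into a \emph{single} cycle for both $C_1$ and $C_2$ at once, rather than into several disjoint loops. I expect to handle this by fixing a periodic choice of edges up the grid and verifying that both loops close up, in the same spirit as the splicing argument in the proof of Lemma \ref{lift}; the four corner cells, whose two boundary edges both lie in $E$, are the spots that demand the most care, since there the usual local ``exactly one edge of each cycle'' bookkeeping breaks down.
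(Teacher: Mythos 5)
Your overall architecture is sound, and your first case is actually a nice simplification of the paper's: when $4 \mid m$ and $4 \mid n$ you get $4 \mid d$, so Lemma \ref{cs} with $u = v = d/2$ on the $d \times d$ grid followed by Lemma \ref{lift} does deliver difference $m'n'd^2 = mn$ with no new construction needed. (The paper instead writes down an explicit $4 \times 4$ pair and lifts that.) Your reduction of the remaining case to the $4 \times n$ grid with $n \equiv 2 \pmod 4$ is also legitimate, and your identification of the unique nonempty totally even subset $E$ of that grid -- $n$ pairwise non-adjacent unit squares in a brick pattern -- is correct, as is the local criterion that $C_1$ must contain exactly half of the $E$-edges of every cell.

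But that is where the proof stops and a plan begins, and the part you defer is the entire substance of the lemma in this case. You never exhibit the cycle $C_1$: you say you ``expect to handle'' the global connectivity by a periodic choice that you would then verify closes up into a single cycle for both $C_1$ and $C_1 \Delta E$ simultaneously, and you flag the corner squares as the place where the bookkeeping breaks down without resolving them. Since Theorem \ref{ambs} shows that for $d = 2$ with $m \equiv n \equiv 2 \pmod 4$ no such pair of cycles exists at all, this case sits right at the boundary of possibility and cannot be waved through; an existence claim here needs an actual witness. The paper supplies one (for the transposed $m \times 4$, $m \equiv 2 \pmod 4$ family): it chains a special hand-built piece with translated copies of the $4 \times 4$ solution to form a pseudocycle realizing $E$, then flips a periodic family of cells in the middle row to splice the components into one cycle, and finally takes the second cycle to be the reflection of the first rather than literally $C_1 \Delta E$. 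Until you carry out your $4 \times n$ construction to that level of explicitness -- in particular verifying single-connectedness of \emph{both} resulting subgraphs -- the $n \equiv 2 \pmod 4$ case remains unproved.
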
 

\begin{myproof} When $m = n = 4$, take the cycles $C_1 = (1, 1){\sim}(3, 1)$---$(3, 2)$---$(4, 2){\sim}(4, 4){\sim}\allowbreak(2, 4)$---$(2, 3)$---$(1, 3){\sim}(1, 1)$ and $C_2 = C_1 \Delta B(1) \Delta B(3)$.

\begin{figure}[ht] \null \hfill \begin{subfigure}{140pt} \centering \includegraphics{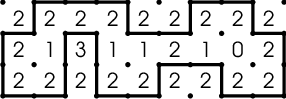} \caption{} \label{c22-a} \end{subfigure} \hfill \begin{subfigure}{140pt} \centering \includegraphics{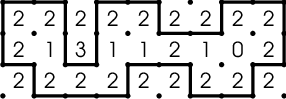} \caption{} \label{c22-b} \end{subfigure} \hfill \null \caption{} \label{c22-fig} \end{figure}

When $m \equiv 2 \pmod 4$ and $n = 4$, let $P$ be the pseudocycle formed as the disjoint union of the cycle $(1, 1){\sim}(3, 1){\sim}(3, 3)$---$(4, 3){\sim}(4, 1){\sim}(6, 1){\sim}(6, 3)$---$(5, 3)$---$(5, 4){\sim}(2, 4)\allowbreak$---$(2, 3)$---$(1, 3){\sim}(1, 1)$ and all translation copies of the previously defined $C_2$ by $k$ units to the right with $k \equiv 2 \pmod 4$ and $6 \le k < m$. For each cell $c = (x, 5/2)$ of the grid such that $x \equiv 5/2 \pmod 4$ and $x \ge 13/2$, delete from $P$ the two vertical edges of $c$ and replace them with the two horizontal edges of $c$. Then take the true cycle thus obtained and its reflection in the horizontal axis of symmetry of the grid.

For example, Figure \ref{c22-fig} shows the preceding construction in the case when $m = 10$.

The remaining cases all follow from these two by Lemma \ref{lift}. \end{myproof}

\section{Sizes} \label{size}

Here, for every rectangular grid we determine whether it admits an ambiguous signature or not.

\begin{theorem} \label{ambs} Let $m$ and $n$ be positive integers with $d = \gcd(m, n)$. Then every cycle on the $m \times n$ grid is uniquely determined by its signature if and only if $m$ and $n$ satisfy one or more of the following conditions:

(a) $m \le 2$ or $n \le 2$;

(b) $d = 1$;

(c) $d = 2$ and $m \equiv n \equiv 2 \pmod 4$.

Conversely, there exist two distinct cycles on the $m \times n$ grid with the same signature if and only if $m$ and $n$ satisfy all three of the following conditions:

(a$'$) $m \ge 3$ and $n \ge 3$;

(b$'$) $d \ge 2$;

(c$'$) If $d = 2$, then furthermore either $m \equiv 0 \pmod 4$ or $n \equiv 0 \pmod 4$. \end{theorem}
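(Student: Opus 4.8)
The plan is to establish the single biconditional that ambiguity occurs if and only if (a$'$), (b$'$), and (c$'$) all hold; the uniqueness half of the statement is then merely its contrapositive, since (a)--(c) negate (a$'$)--(c$'$) jointly. (Given $d \ge 2$: when $d = 2$ the condition $\neg$(c) unwinds to exactly (c$'$), and when $d \ge 3$ both $\neg$(c) and (c$'$) are vacuous.) The unifying tool is the dimension count from Lemma \ref{ter}: the images under $\Psi$ of the diamond subsets of the $d \times d$ grid form a basis for the space of totally even subsets of the $m \times n$ grid, so this space has dimension $d - 1$. Since the difference of two distinct cycles with a common signature is a nonempty totally even set, uniqueness amounts to excluding such sets.

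For the existence half I would rely entirely on Section \ref{const}. When $d \ge 3$, apply Lemma \ref{cs} to the $d \times d$ grid with $u = 2$ and $v = d - 2 \ge 1$ (note $u$ is even, as required) to obtain two distinct cycles with a common signature and a nonzero difference, then transport them to the $m \times n = dm' \times dn'$ grid via Lemma \ref{lift} with $a = m'$ and $b = n'$. When $d = 2$, condition (c$'$) forces $m$ and $n$ both even with at least one divisible by $4$, and (a$'$) makes both at least $4$; these are exactly the hypotheses of Lemma \ref{c22}, which applies verbatim.

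For the uniqueness half, condition (b), $d = 1$, is immediate: the space of totally even sets has dimension $0$, so the empty set is the only one, and no two distinct cycles can share a signature. Condition (a), say $m \le 2$, I would settle combinatorially: the $1 \times n$ grid has no cycles, while in the $2 \times n$ ladder every cycle is the boundary rectangle $R_{a,b}$ of a contiguous block of cells, so that the cells receiving a nonzero clue form exactly the block $\{a, \dots, b-1\}$ and thereby recover the cycle. Hence distinct cycles have distinct signatures. (This case genuinely needs a separate argument, since for $m = 2$ and $n \equiv 0 \pmod 4$ a nonempty totally even set does exist.)

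The delicate case, which I expect to be the \textbf{main obstacle}, is (c): $d = 2$ with $m \equiv n \equiv 2 \pmod 4$, where $m' = m/2$ and $n' = n/2$ are both odd. Here the dimension is $1$, so there is a unique nonempty totally even set $E_0 = \Psi(B(1))$, which by Lemma \ref{ter} is the disjoint union of $m'n'$ unit-cell boundaries sitting on a spaced-out sublattice of cells (hence pairwise edge-disjoint, each a $4$-cycle bounding a single face); crucially, $m'n'$ is odd. To rule out $E_0$ as a cycle difference, I would reuse the region decomposition from the alternative proof of Theorem \ref{d4}: writing $R_1 = I_1 \cap O_2$, etc., and $r_{i,j}$ for the number of edges separating $R_i$ from $R_j$, that proof yields $r_{1,4} = r_{2,3} = |E_0|/4 = m'n'$. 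On the other hand, each edge of $E_0$ lies on exactly one isolated cell boundary; matching the signature forces each such boundary to contribute two edges to $E_1$ and two to $E_2$; and all $E_1$-edges of a fixed cell separate the same pair of regions (the pair determined solely by whether that cell lies inside $C_2$). Consequently $r_{1,4}$ and $r_{2,3}$ are each even, contradicting $r_{1,4} = r_{2,3} = m'n'$ odd. This excludes $E_0$ and completes the proof. The subtlety is precisely that the weaker size divisibility available at this stage (Theorem \ref{d4}) does not by itself forbid $|E_0| = 4m'n'$; the parity contradiction must instead be extracted from the finer structure of $E_0$ as a union of isolated cell boundaries.
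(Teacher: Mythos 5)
Your proposal is correct, and most of it coincides with the paper's own argument: the existence half (Lemma \ref{cs} with $u = 2$ on the $d \times d$ grid lifted by Lemma \ref{lift} when $d \ge 3$, and Lemma \ref{c22} when $d = 2$) and the uniqueness cases (a) and (b) are exactly what the paper does. Where you genuinely diverge is case (c), $d = 2$ with $m \equiv n \equiv 2 \pmod 4$, which the paper (correctly, as you anticipate) treats as the one nontrivial step. The paper reduces it via Lemma \ref{ter} to Proposition \ref{22} -- the puzzle clueing every cell $(x, y)$ with $x \equiv y \equiv 3/2 \pmod 2$ with a $2$ has no solution -- proved there by a planarity argument in which a white path and a black path of an auxiliary graph would have to cross. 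You instead take the unique nonempty totally even set $E_0 = \Psi(B(1))$, note that it is the disjoint union of an odd number $m'n'$ of pairwise non-adjacent cell boundaries, and play the four-region identity $r_{1,2} = r_{2,3} = r_{3,4} = r_{1,4} = |E_0|/4 = m'n'$ from the alternative proof of Theorem \ref{d4} against a local parity count: all four edges of such a cell lie in $E$, so the shared clue forces exactly two $E_1$-edges per cell, and since those edges are not in $C_2$ both of their flanking faces lie on that cell's side of $C_2$, so the pair lands entirely in $r_{2,3}$ or entirely in $r_{1,4}$, making both counters even -- contradiction. I checked this and it is sound. Your route is closest in spirit to Mebane's counting proof of Proposition \ref{22} (also an odd-versus-even count over the same $m'n'$ cells) and to the paper's remark that this case also follows from Corollary \ref{4uvmn} together with Theorem \ref{d8}: in effect you establish the divisibility $8 \mid |E|$ by hand for this one configuration. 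What this buys is independence from both Proposition \ref{22} and the transversal machinery of Section \ref{div}, at the cost of relying on the region decomposition that the paper only presents as an alternative proof; what it loses is the standalone interest of Proposition \ref{22} itself, which the paper wants for its own sake (it is exercise 420 of \cite{K2}).
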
 

Most of Theorem \ref{ambs} is a direct corollary of results that we have established already. The only exception is the sufficiency of condition (c) for uniqueness. (Or, equivalently, the necessity of condition (c$'$) for ambiguity.) In this section, we will obtain it with the help of one more general observation about Slitherlink puzzles. Notice, however, that this part of Theorem \ref{ambs} also follows immediately by Corollary \ref{4uvmn} and Theorem \ref{d8}.

\begin{proposition} \label{22} Let $m$ and $n$ be positive integers with $m \equiv n \equiv 2 \pmod 4$. Consider the Slitherlink puzzle on the $m \times n$ grid where each cell $(x, y)$ such that $x \equiv y \equiv 3/2 \pmod 2$ is clued with a $2$. This Slitherlink puzzle does not admit any solutions. \end{proposition}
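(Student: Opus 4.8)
The plan is to rule out not just true cycles but all weak solutions at once, by working with the inside/outside $2$-colouring that any loop induces on the faces. Assign to each finite cell $c$ a value $\sigma(c) = -1$ if $c$ lies inside the loop and $\sigma(c) = +1$ if it lies outside, and put $\sigma = +1$ on the exterior face. The first step is a purely local reformulation of the clue. Writing $N(c)$ for the number of loop edges of $c$ and summing over the four edge-neighbours $c'$ of $c$ (an off-grid neighbour contributing the exterior value $+1$), one has $N(c) = \tfrac12 \sum_{c'} (1 - \sigma(c)\sigma(c')) = 2 - \tfrac12 \sigma(c) \sum_{c'} \sigma(c')$. Hence the clue $N(c)=2$ holds \emph{if and only if} $\sum_{c'} \sigma(c') = 0$, i.e.\ exactly two of the four neighbours of $c$ are inside and two are outside. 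The key feature is that this condition does not involve $\sigma(c)$ itself.

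Next I would record the geometry of the clued cells. A cell with $x \equiv y \equiv 3/2 \pmod 2$ is a cell $(i+1/2,\, j+1/2)$ with $i$ and $j$ both odd, so the clued cells are exactly those with both $\lfloor x\rfloor$ and $\lfloor y\rfloor$ odd; they sit on a sub-grid of spacing $2$, and in particular no two of them are adjacent. The four neighbours of a clued cell are always cells of mixed parity (one coordinate floor even, one odd) or else off-grid. Conversely, each in-grid mixed-parity cell is an edge-neighbour of exactly two clued cells, and these two clued cells lie two units apart along a single grid line.

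Now I would sum the clue relations against a checkerboard sign. Colour the clued cells $\pm$ in the checkerboard pattern of their own sub-grid, so that $\eta(i+1/2,\,j+1/2) = (-1)^{(i-1)/2 + (j-1)/2}$; the point of this choice is that two clued cells two units apart along a grid line receive \emph{opposite} signs. Since every clued cell satisfies $\sum_{c'} \sigma(c') = 0$, we get $0 = \sum_{\text{clued } c} \eta(c) \sum_{c'} \sigma(c')$. Regrouping the right-hand side by neighbour, each in-grid neighbour is shared by exactly two clued cells carrying opposite $\eta$, so its two contributions cancel; thus all in-grid neighbours drop out in pairs. What survives is only the off-grid neighbours, each contributing its exterior value $+1$ weighted by $\eta(c)$, and these come precisely from the clued cells lying along the four sides of the grid.

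The main obstacle, and the only place the hypothesis $m \equiv n \equiv 2 \pmod 4$ enters, is the final boundary count. Here two things must be checked. First, the in-grid cancellation is genuinely total: every mixed-parity neighbour has both of its clued partners inside the grid; this needs $m$ and $n$ even, so that the clued indices, being odd, stop exactly one step short of the boundary and no mixed-parity cell loses a partner off the edge. Second, along each side the signs $\eta$ of the boundary clued cells form an alternating sequence of odd length — there are $2k+1$ of them along a side of length $m=4k+2$ — so each side sums to $+1$ and the four sides together contribute $4$; the residues mod $4$ are what make the signs at the far corners line up. Corner clued cells need no special treatment: such a cell simply has two off-grid neighbours and is counted once by each of its two incident sides. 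With the boundary sum equal to $4$, the identity $0 = 4$ is the desired contradiction, and since the argument used only the induced $2$-colouring, it excludes weak solutions as well, a fortiori any true cycle.
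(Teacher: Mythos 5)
Your proof is correct; every step checks out, including the two places where the hypothesis enters (each interior mixed-parity cell really does keep both of its clued partners in-grid because $m$ and $n$ are even, and each side of the grid carries an odd number $m/2$ or $n/2$ of clued cells whose alternating signs sum to $+1$, with corner cells correctly counted once per incident off-grid side). In substance, though, it is a reorganisation of the paper's second proof (Mebane's) rather than a new argument: both rest on the same two local facts, namely that a clue of $2$ forces exactly two of the four side-neighbours of a clued cell to lie inside the loop, and that each interior mixed-parity cell has exactly two clued neighbours, which can be given opposite signs. Where the paper counts the inside mixed-parity cells against the partition of the clued cells into the classes $x+y\equiv 1$ and $x+y\equiv 3\pmod 4$ (your $\eta=-1$ and $\eta=+1$) and contradicts the oddness of the total $mn/4$, you sum the local relations weighted by $\eta$, let the interior contributions telescope, and are left with the constant boundary term $4$; the two accountings are equivalent, since your signed boundary sum equals $4(|T_3|-|T_1|)$ identically. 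What your version buys is a contradiction that is visibly independent of the loop and localised on the boundary, in the style of a discrete summation by parts, and it applies verbatim to weak solutions. It is genuinely different from the paper's \emph{primary} proof, which instead builds an auxiliary planar graph on the points congruent to $(1/2,1/2)$ modulo $2$ and derives the contradiction from two monochromatic paths that would be forced to cross.
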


\begin{myproof} Let $G$ be that grid and let $P$ be that Slitherlink puzzle. Suppose, for the sake of contradiction, that $C$ is a solution to $P$.

Construct the graph $H$ as follows: The vertices of $H$ are all points $(x, y)$ with $1/2 \le x \le m + 1/2$, $1/2 \le y \le n + 1/2$, and $x \equiv y \equiv 1/2 \pmod 2$. The edges of $H$ are the straight-line segments joining the pairs $p_1 = (x_1, y_1)$ and $p_2 = (x_2, y_2)$ of these points such that, firstly, $|x_1 - x_2| = |y_1 - y_2| = 2$; and, secondly, writing $c$ for the cell of $G$ centered at $(p_1 + p_2)/2$, the two edges of $c$ which belong to $C$ are reflections of one another with respect to the straight line through $p_1$ and $p_2$.

Then the vertices of $H$ at the four points $(x, y)$ with $x \in \{1/2, m + 1/2\}$ and $y \in \{1/2, n + 1/2\}$ are of degree $1$ and all other vertices of $H$ are of even degree.

Colour a vertex $(x, y)$ of $H$ white when $x + y \equiv 1 \pmod 4$ and black when $x + y \equiv 3 \pmod 4$. Then each edge of $H$ joins two vertices of the same colour, and so each connected component of $H$ is monochromatic as well. Since each connected component of $H$ contains an even number of odd-degree vertices, we conclude that a white path in $H$ connects the pair $\{(1/2, 1/2), (m + 1/2, n + 1/2)\}$ and a black path in $H$ connects the pair $\{(m + 1/2, 1/2), (1/2, n + 1/2)\}$. However, two edges of $H$ joining vertices of opposite colours can never intersect, and there is no way for these two paths to get past each other within $H$. We have arrived at a contradiction. \end{myproof}

A different proof of Proposition \ref{22} was found by Mebane. (See the solution to exercise 420 of \cite{K2}.) We reproduce it below.

\begin{myproof}[Alternative proof of Proposition \ref{22}] Define $G$, $P$, and $C$ as before.

Let $S$ be the set of all odd cells of $G$ contained inside of the region enclosed by $C$. Then each clued cell is adjacent by side to two cells of $S$.

Let $T_i$ be the set of all clued cells $(x, y)$ with $x + y \equiv i \pmod 4$, where $i \in \{1, 3\}$. Then each odd cell of $G$ is adjacent by side to one cell of $T_1$ and one cell of $T_3$.

Hence, $|S| = 2|T_1|$ and $|S| = 2|T_3|$. However, the total number of clued cells, $mn/4$, is odd, and so we must have that $|T_1| \neq |T_3|$. \end{myproof}

We go on to the proof of Theorem \ref{ambs}.

\begin{myproof}[Proof of Theorem \ref{ambs}] For uniqueness, the sufficiency of condition (a) is clear; the sufficiency of condition (b) follows by Lemma \ref{ter}; and the sufficiency of condition (c) follows by Lemma \ref{ter} and Proposition \ref{22}.

Conversely, the sufficiency of the conjunction of conditions (a$'$), (b$'$), and (c$'$) for ambiguity follows by Lemmas \ref{lift} and \ref{cs} when $d \ge 3$; and it follows by Lemma \ref{c22} when $d = 2$. \end{myproof}

\section{Divisibility} \label{div}

Here we establish one divisibility result, as follows:

\begin{theorem} \label{d8} The size of the difference between two cycles with the same signature on a rectangular grid is always divisible by $8$. \end{theorem}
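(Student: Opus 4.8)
The plan is to push the transversal argument of Lemma \ref{t4} one step further, exactly as foreshadowed at the close of Section \ref{trans}. Let $C_1$ and $C_2$ be two cycles with the same signature on the grid $G$, write $E = C_1 \Delta C_2 = E_1 \cup E_2$ as in Section \ref{trans}, and form the transversal graph $T$. Writing $\ell(t)$ for the number of edges that a transversal $t$ crosses, Lemma \ref{t4} gives $4 \mid \ell(t)$ and $|E| = \sum_t \ell(t)$. Since Theorem \ref{d4} already supplies $4 \mid |E|$, the content of Theorem \ref{d8} is that $\tfrac14|E| = \sum_t \tfrac14\ell(t)$ is even; equivalently, the number of \emph{bad} transversals, those with $\ell(t) \equiv 4 \pmod 8$, is even. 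So everything reduces to pairing up the bad transversals.

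The pairing I would use comes from the central symmetry $\rho$ of $G$, the half-turn about its center. The difference $E$ is a totally even set, hence $\rho$-invariant: on a square grid this is part of Lemma \ref{symm}, and it transfers to an arbitrary rectangle through the tiling of Lemma \ref{ter}. The aim is to realise the transversal system $T$ so that it too is $\rho$-invariant. Then $\rho$ carries each transversal to a transversal crossing the same number of edges, so the transversals fall into pairs $\{t, \rho(t)\}$, each pair contributing an even amount $2\cdot\tfrac14\ell(t)$ to $\sum_t \tfrac14\ell(t)$. The only transversals not handled this way are the $\rho$-invariant ones, which must surround or pass through the center of $G$; there is at most one of these, and the remaining task is to show that its length is divisible by $8$ (or that it does not occur), so that it contributes an even term as well.

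The main obstacle is the $\rho$-invariant realisation itself, together with the central transversal. Although $\rho$ preserves $E$, it need \emph{not} preserve the bipartition of $E$ into $E_1$ and $E_2$ — there is no reason for $\rho(C_1)$ to be one of $C_1$, $C_2$ — and the defining property of a transversal is that its crossings alternate between $E_1$ and $E_2$. Consequently the $\rho$-image of a valid within-face matching of $E_1$-edges to $E_2$-edges may violate the colours, so one cannot symmetrise $T$ naively. To get around this I would first dispose of the square case, where Lemma \ref{symm} furnishes the full dihedral symmetry and there is extra freedom to choose the face matchings compatibly (reflection in a diagonal, whose fixed line runs cleanly through the grid, is likely the more convenient symmetry here), and then reduce a general rectangle to the square case through the decomposition $E = \Psi(E^\star)$ of Lemma \ref{ter}. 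Reconciling the symmetrisation with the $E_1/E_2$ alternation, and controlling the single central transversal, is the crux; an alternative worth keeping in reserve is to bypass the colouring entirely and read a mod-$8$ invariant off the even subgraph $E$ alone, via the region bookkeeping used in the alternative proof of Theorem \ref{d4} (where $|E| = 4r$ and one must show the common value $r = r_{1,2}$ is even).
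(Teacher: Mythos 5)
Your plan is not a proof: the step you yourself identify as ``the crux'' --- realising the transversal system $T$ so that it is invariant under the half-turn $\rho$ while keeping the $E_1/E_2$ alternation --- is exactly the part that does not go through, and the paper never attempts it. The obstruction you name is real: $\rho$ preserves $E$ but permutes $\{C_1, C_2\}$ arbitrarily, so the $\rho$-image of a valid within-face matching can fail the colour-alternation condition, and no symmetrised $T$ is exhibited. Moreover, even granting a $\rho$-invariant $T$, your claim that at most one transversal is $\rho$-fixed is false: cycle transversals enclosing the centre can be $\rho$-invariant, and there can be arbitrarily many nested ones; nothing in the proposal controls their contributions modulo $8$. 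Finally, on a non-square rectangle your appeal to Lemma \ref{symm} via Lemma \ref{ter} gives you $\rho$-invariance of $E$ but not of any transversal system, so the rectangular case is also open in your sketch.

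The paper's actual route is different and worth contrasting with yours. In the square case it does not symmetrise anything: by Lemma \ref{4uv} one has $|E| = 4uv$ with $u + v = n$, so $8 \nmid |E|$ forces $u$ and $v$ both odd, hence the number $s$ of odd indices in the diamond decomposition is odd. A sequence of parity lemmas about an \emph{arbitrary} transversal system (Lemmas \ref{ep}, \ref{cross}, \ref{par}) shows that each path transversal must join two green diamond vertices lying on the same pair of opposite sides of the frame; oddness of $s$ then forces one transversal joining the top and bottom sides and another joining the left and right sides, and these two cannot avoid intersecting --- a contradiction. The general rectangle is handled not by tiling the transversals but by Corollary \ref{4uvmn} when $m'n'$ is even, and otherwise by Lemma \ref{lift}: blow the configuration up to a $dm'n' \times dm'n'$ square where the difference has size $m'n'|E|$, apply the square case, and divide out the odd factor $m'n'$. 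If you want to salvage your approach, the missing ingredient is precisely a statement like Lemma \ref{par} that constrains where transversals can terminate without ever needing $T$ itself to be symmetric; your reserve idea of extracting a mod-$8$ invariant from the region bookkeeping of the alternative proof of Theorem \ref{d4} is not developed and, as you note, showing $r_{1,2}$ even is the whole problem restated.
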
 

By Theorem \ref{d4}, the size of the difference between two solutions to the same signature is always divisible by $4$ already in the setting of general planar graphs. Theorem \ref{d8} shows that a higher divisibility holds on rectangular grids.

Just as with Theorem \ref{d4}, the divisibility result of Theorem \ref{d8} cannot be improved, in the sense that every positive integer multiple of $8$ does occur, on some rectangular grid, as the size of the difference between some pair of cycles with the same signature. This follows by Lemma \ref{cs}.

We will focus on the square case first. Once we have resolved it completely, we will reduce the general rectangular case to it.

The proof relies on a series of lemmas delving successively deeper into the parity properties of transversals on grids.

Let $G$ be the grid of size $n \times n$, let $C_1$ and $C_2$ be two distinct cycles on $G$ with the same signature, let $E = C_1 \Delta C_2$, and let $E = B(a_1) \Delta B(a_2) \Delta \cdots \Delta B(a_k)$ be the diamond decomposition of $E$, with $a_1 < a_2 < \cdots < a_k$. For convenience, denote $D_i = D(a_i)$, as in the proof of Lemma \ref{4uv}.

Construct a transversal graph $T$ for $C_1$ and $C_2$ as in Section \ref{trans}. We can assume without loss of generality that the vertices of $T$ are the midpoints of their respective edges of $G$ and that all edges of $T$ are straight-line segments.

Each vertex of one of $D_1$, $D_2$, $\ldots$, $D_k$ is the apex of an isosceles right-angled triangle whose base is an edge of $E$ on some side of $G$. We attach each such vertex to the transversal containing the midpoint of that edge of $E$. So now each path transversal ``connects'' two diamond vertices attached to it at its endpoints.

Colour the boundary of $D_i$ green when $a_i$ is odd and red when $a_i$ is even.

\begin{lemma} \label{ep} Each path transversal connects two diamond vertices of the same colour. \end{lemma}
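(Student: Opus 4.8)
The plan is to attach to the cells of $G$ a single $\mathbb{F}_2$-valued quantity that remains constant as one walks along a transversal and that reads off the colour of a diamond at each endpoint. Let $E = C_1 \Delta C_2$ and let $\psi$ be the region indicator of $E$: the $\{0,1\}$-valued function on the cells and the exterior face with $\psi = 0$ on the exterior and with the property that $\psi$ changes value across every edge of $E$ and across no other edge. Such a $\psi$ exists and is unique because $E$ is totally even; concretely $\psi = \phi_1 + \phi_2$, where $\phi_i$ is the inside-indicator of the cycle $C_i$. For a cell $f$ with centre $(x, y)$ write $\sigma(f)$ for the parity of $x + y$ (that is, the checkerboard colour of $f$), and set $\kappa(f) = \psi(f) + \sigma(f) \pmod 2$.

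First I would show that $\kappa$ is constant on the sequence of cells that a transversal passes through. A transversal visits cells $f_1, f_2, \ldots$ in which two consecutive cells $f_r$ and $f_{r + 1}$ are the two cells sharing the grid edge carrying the intervening vertex of $T$; that edge lies in $E$. Crossing it flips $\psi$, since $\psi$ changes exactly across edges of $E$, and it also flips $\sigma$, since edge-adjacent cells have opposite checkerboard colours (their centres differ by a unit step, so $x + y$ changes by $\pm 1$). The two flips cancel, so $\kappa(f_r) = \kappa(f_{r + 1})$, and hence $\kappa$ takes one common value along the whole transversal.

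Second I would evaluate $\kappa$ at the two ends of a path transversal. Its first vertex is the midpoint of a base edge $M \in E$ on a side of $G$, the base of the isosceles right triangle whose apex is the attached diamond vertex, and the first cell $f_1$ is the cell just inside $M$. Since $M \in E$ and $\psi = 0$ on the exterior, we get $\psi(f_1) = 1$. By Lemma \ref{basis} the edges of $E$ on a side of $G$ sit exactly at the positions $a_1, \ldots, a_k$, so if the attached vertex belongs to $D(a_\ell)$ then a direct coordinate check gives $\sigma(f_1) \equiv a_\ell \pmod 2$; for the lower side, $f_1$ is centred at $(a_\ell + 1/2, 3/2)$, whence $x + y = a_\ell + 2$. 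Thus $\kappa(f_1) \equiv a_\ell + 1 \pmod 2$, so $\kappa(f_1) = 0$ precisely when $a_\ell$ is odd, that is, when the attached diamond $D(a_\ell)$ is green. The same computation runs on each of the four sides, where the inner cell always satisfies $x + y \equiv a_\ell \pmod 2$ (on the three remaining sides one gets $x + y = 2n - a_\ell$, of the same parity as $a_\ell$). Hence $\kappa$ at an endpoint determines the colour of the diamond attached there, and combining this with the constancy of $\kappa$ forces the two endpoints to carry the same colour, which is the claim.

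The main obstacle is the endpoint computation of the second step: one must verify, side by side, that the inner cell's checkerboard parity agrees with the parity of the diamond index $a_\ell$ and that $\psi$ is pinned to $1$ there. I expect this colour-encoding at the endpoints to be the only place needing real care. I also note that the argument uses nothing beyond the fact that a transversal is a path through cells crossing edges of $E$; in particular it does not use the alternation of $E_1$ and $E_2$, so that feature appears to be unnecessary for this first lemma and is presumably reserved for the deeper lemmas that follow.
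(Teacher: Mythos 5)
Your proof is correct, but it takes a genuinely different route from the paper's. The paper's argument is short: by Lemma \ref{t4} the transversal meets $4\ell$ edges of $G$, so it passes through an odd number of cells; since consecutive cells alternate in checkerboard parity, the first and last cells have the same parity, and green (resp.\ red) diamond vertices sit next to even (resp.\ odd) cells. You instead build the single invariant $\kappa=\psi+\sigma \pmod 2$, where $\psi=\phi_1+\phi_2$ flips exactly across edges of $E$ and $\sigma$ is the checkerboard colour; since every crossing of an edge of $E$ flips both, $\kappa$ is constant along the transversal, and the endpoint evaluation $\psi=1$, $\sigma\equiv a_\ell$ reads off the diamond's colour. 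This buys you independence from Lemma \ref{t4} (and hence from the $E_1$/$E_2$ alternation), exactly as you note at the end; the paper itself records that such a proof exists, but its version (sketched at the start of the proof of Lemma \ref{cross}) tracks the parities of the numbers of green and red diamonds containing the current point rather than the cycles' interior indicators, so your formulation is a further, slightly more self-contained variant, at the cost of being longer than the paper's two-line argument. One trivial slip: on the left side the inner cell is centred at $(3/2,\,a_\ell+1/2)$, giving $x+y=a_\ell+2$ rather than $2n-a_\ell$; the parity conclusion $\sigma(f_1)\equiv a_\ell\pmod 2$ is unaffected, and the computation $x+y=2n-a_\ell$ is correct for the upper and right sides.
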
 

\begin{proof} Consider any path transversal $t$. Since the cells of $G$ that $t$ visits alternate between even and odd, by Lemma \ref{t4} the two which contain the endpoints of $t$ must be of the same parity. But each green diamond vertex is adjacent to an even cell of $G$ and each red diamond vertex is adjacent to an odd cell of $G$. \end{proof}

(A different argument which does not use Lemma \ref{t4} is given at the beginning of the proof of Lemma \ref{cross}.)

Let $Q$ be the frame of $G$. The boundaries of $D_1$, $D_2$, $\ldots$, $D_k$ partition $Q$ into subregions. We call each such subregion a \emph{room}. The outermost rooms are triangles and the rest of them are all rectangles.

Colour all rooms in black and white in a checkerboard manner so that the triangular ones are white. Then a room is white if and only if it is contained inside of an even number of diamonds out of $D_1$, $D_2$, $\ldots$, $D_k$, and it is black if and only if it is contained inside of an odd number of such diamonds. So the edges of $E$ are exactly the edges of $G$ contained inside of black rooms.

Observe that each edge of $T$ is of length either $1/\sqrt{2}$ or $1$.

Suppose first that $e$ is a short edge of $T$. Then $e$ joins the midpoints of two edges of $E$ with a common vertex. So $e$ is contained inside of a single room.

Suppose now that $e$ is a long edge of $T$. Then $e$ joins the midpoints of two edges of $E$ which coincide with two opposite sides of the cell $c$ of $G$ containing $e$. Furthermore, the other two edges of $c$ cannot be in $E$, by the definition of a transversal graph. It follows that the four sides of $c$ are contained inside of four pairwise distinct rooms whose boundaries come together at the center of $c$. So the two halves of $e$ are contained inside of two different rooms adjacent by corner.

When $c$ is an odd cell, the two diamond boundaries which meet at its center are both green, and when it is an even cell they are both red. We refer to these configurations as a \emph{green crossing} and a \emph{red crossing}, respectively. Our analysis shows that a transversal can only travel between rooms via these crossings.

\begin{lemma} \label{cross} Each path transversal which connects two green diamond vertices passes through an even number of red crossings, and similarly with the colours swapped. \end{lemma}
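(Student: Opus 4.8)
The plan is to recast ``red crossing'' as an intersection with a fixed family of straight chords, and then read off its parity from the location of the transversal's two endpoints on the frame $Q$. A crossing sits at the centre $(x_c, y_c)$ of a cell $c$, where one unit-slope and one negative-slope diamond side meet, and it is red exactly when $c$ is even, i.e. when $x_c - y_c$ is even. Writing $b$ for an even index $a_i$, the two unit-slope sides of the red diamond $D(b)$ are, inside $Q$, precisely the full chords $x - y = b$ and $x - y = -b$, whose endpoints on $Q$ are two vertices of $D(b)$. Since a short edge of $T$ lies inside a single room it meets no diamond side at all, so a transversal can cross a diamond side only along a long edge, and at a red crossing it meets exactly the unit-slope red side through $(x_c, y_c)$, while at a green crossing it meets no red chord. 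Hence the number of red crossings of a path transversal $t$ equals the number of intersection points of $t$ with the union of the red unit-slope chords $\{x - y = \pm b : a_i = b \text{ even}\}$.

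I would then compute this count modulo $2$ by a winding argument. Let $P, P'$ be the two green diamond vertices joined by $t$, both on $Q$, let $\beta$ be one of the two boundary arcs of $Q$ between them, pushed slightly into the interior, and set $\Gamma = t \cup \beta$. Each red chord separates $Q$ into two pieces, so the closed curve $\Gamma$ crosses it an even number of times; thus $t$ and $\beta$ meet it equally often modulo $2$, and $\beta$ (running just inside the boundary) crosses a chord once for each of its two endpoints that lies on $\beta$. As every diamond vertex is an endpoint of exactly one unit-slope side, this yields
\[
\#(\text{red crossings of } t) \equiv \#\{\text{red diamond vertices on } \beta\} \pmod 2 .
\]
Both arcs give the same parity, since the total number of red diamond vertices is $4$ times the number of red diamonds. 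So the lemma is equivalent to the statement that the arc of $Q$ between the two green endpoints of $t$ contains an even number of red diamond vertices.

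Finally I would prove this parity statement, and I expect it to be the main obstacle: the reduction above holds for an arbitrary curve from $P$ to $P'$, and the conclusion is plainly false for arbitrary pairs of green vertices (two green vertices separated by a single red vertex would give an odd count), so genuine use of the transversal hypothesis is unavoidable here. My intended route is first to supply the promised argument, not relying on Lemma \ref{t4}, that the two endpoints share a colour, by following $t$ room by room through its successive crossings. I would then try to exploit the symmetries of $E$ given by Lemma \ref{symm}: the diagonal reflections of $G$ fix the whole diamond configuration and carry red vertices to red vertices, and a reflection that swaps $P$ and $P'$ interchanges the two arcs of $Q$, forcing the red-vertex count on each arc to be exactly half of the (even) total and hence even. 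The delicate point, and the true crux, is to show that the two endpoints of a single path transversal really are matched by such a reflection -- equivalently, that no transversal joins two green vertices separated by an odd number of red vertices -- which I expect to require tracking the alternation between $E_1$ and $E_2$ along $t$ in the spirit of Lemma \ref{t4}. The colour-swapped statement then follows verbatim with the roles of the two diagonals, i.e. of $x - y$ and $x + y$, interchanged.
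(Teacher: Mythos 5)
Your reduction is correct, and it is in essence a reformulation of the first half of the paper's own argument: identifying the red crossings of $t$ with its transversal intersections with the full chords $x - y = \pm b$ ($b$ even), closing $t$ up with a boundary arc $\beta$, and concluding that the number of red crossings is congruent mod $2$ to the number of red diamond vertices on $\beta$ is the same as the paper's observation that $t$ travels only through the ``spaces'' cut out by the red diamonds and flips a checkerboard colouring of those spaces exactly at the red crossings; your arc-parity claim is precisely the claim that the two endpoints of $t$ lie in spaces of the same colour. But that claim is the entire content of the lemma, you have not proved it, and the route you sketch for it cannot work. By Lemma \ref{symm} the only symmetries available are the two diagonal reflections and the central symmetry, and each of these maps every vertex of a diamond $D(a)$ to another vertex of the \emph{same} diamond $D(a)$. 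Hence such a symmetry can swap the two endpoints of $t$ only when both are vertices of one and the same green diamond, which is false in general: nothing prevents a path transversal from joining, say, the lower-side vertices of two distinct green diamonds (pairings of exactly this kind occur among the transversals considered in the proof of Theorem \ref{d8}). Your fallback, ``tracking the alternation between $E_1$ and $E_2$ along $t$ in the spirit of Lemma \ref{t4}'', is a hope rather than an argument; as you yourself note, the arc-parity statement is false for arbitrary pairs of green vertices, so some genuinely new invariant of the transversal is required, and you do not supply one.

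The idea you are missing is the paper's second, finer conserved quantity. Beyond the parity of the number of red diamonds containing the current point (which is what confines $t$ to the spaces and locates the colour flips at the red crossings), one classifies each \emph{green} diamond by the colour of the spaces that its boundary meets (``yellow-cornered'' versus ``blue-cornered''), and observes that a green crossing always flips membership in two green diamonds of the \emph{same} class, because that class is determined by the colour of the space containing the crossing. The parity of membership in each class is therefore conserved separately along $t$; since each endpoint lies inside exactly one green diamond, those two green diamonds are of the same class, so the two endpoint spaces have the same colour, which is exactly your arc-parity claim. Without some such transversal-dependent invariant your write-up establishes only an equivalent restatement of Lemma \ref{cross}, not a proof of it.
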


\begin{myproof} Consider any path transversal $t$ and suppose, for concreteness, that $t$ connects two green diamond vertices.

Let us traverse $t$ from one endpoint to the other. In the beginning, we are contained inside of one green diamond and zero red diamonds. Each time when we pass through a green crossing, we flip our membership with respect to two green diamonds, and similarly for the red crossings. So, at all times when we are not on the boundary of a diamond, we will be contained inside of an odd number of green diamonds and an even number of red diamonds.

(The invariance of the parity of the number of diamonds of each colour that we are contained inside of implies the statement of Lemma \ref{ep} in a way which does not rely on Lemma \ref{t4}.)

The boundaries of the red diamonds partition $Q$ into subregions. We call each such subregion which is contained inside of an even number of red diamonds a \emph{space}. (This is similar to the definition of a white room, except that this time around for the subdivision of $Q$ we take only the red diamonds instead of all of $D_1$, $D_2$, $\ldots$, $D_k$.) Colour all spaces in yellow and blue in a checkerboard manner, so that two spaces which touch at a corner are always of opposite colours.

Our initial observation about parities implies that $t$ travels only within spaces. Furthermore, as we traverse $t$, the points where we flip the colour of our space are exactly the red crossings. We are left to show that the two endpoints of $t$ are contained inside of two spaces of the same colour.

Observe that, for each green diamond, all spaces that its boundary intersects are of the same colour. This applies, in particular, to the spaces which contain its corners. Depending on that colour, we call each green diamond either \emph{yellow-cornered} or \emph{blue-cornered}.

Each time when we pass through a green crossing, we flip our membership with respect to two green diamonds of the same type -- either both yellow-cornered or both blue-cornered. This follows because the type of each one of these diamonds is determined by the colour of the space containing the crossing. So the parity of the number of yellow-cornered green diamonds that we are contained inside of, and the analogous parity for blue-cornered green diamonds, are each preserved individually.

Thus the green diamonds which contain the endpoints of $t$ must be of the same type as well -- either both yellow-cornered or both blue-cornered. Consequently, the two endpoints of $t$ are contained inside of two spaces of the same colour, as needed. \end{myproof}

\begin{lemma} \label{par} Suppose that a path transversal connects the two diamond vertices $(x', y')$ and $(x'', y'')$. Then $x' \equiv x'' \pmod 2$ and $y' \equiv y'' \pmod 2$. \end{lemma}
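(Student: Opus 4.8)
The plan is to pass to the rotated coordinates $u = x + y$ and $w = x - y$, in which the whole configuration becomes far more transparent. There each diamond $D(a)$ turns into the axis-parallel rectangle $\{(u,w) : a + 1 \le u \le 2n - a + 1,\ -a \le w \le a\}$, so its four sides lie on the lines $u = a + 1$, $u = 2n - a + 1$, $w = a$, $w = -a$. The four vertices of $D(a)$ all have integer $(u, w)$, and a short computation shows that modulo $2$ they are $(0, 1)$ when $a$ is odd (green) and $(1, 0)$ when $a$ is even (red). The vertices of $T$ (midpoints of edges of $E$) all have both $u$ and $w$ in $\mathbb{Z} + 1/2$. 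A short edge of $T$ moves the current point by a unit step parallel to one axis (changing exactly one of $u, w$ by $\pm 1$), while a long edge moves it by a unit diagonal step (changing both by $\pm 1$) whose midpoint is the associated crossing; that crossing is green or red according as its midpoint is $\equiv (0,1)$ or $\equiv (1,0) \pmod 2$.

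Now let the transversal connect the diamond vertices $V' = (u', w')$ and $V'' = (u'', w'')$. By Lemma \ref{ep} these have the same colour, so from the parity classes above both $\Delta u = u'' - u'$ and $\Delta w = w'' - w'$ are even. Since $x = (u + w)/2$ and $y = (u - w)/2$, the congruence $x' \equiv x'' \pmod 2$ amounts to $\Delta u + \Delta w \equiv 0 \pmod 4$, while $y' \equiv y'' \pmod 2$ amounts to $\Delta u - \Delta w \equiv 0 \pmod 4$; once $\Delta u$ and $\Delta w$ are known to be even, both conditions say exactly $\Delta u \equiv \Delta w \pmod 4$ and so are equivalent to one another. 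Hence it suffices to prove a single one of them, say $2\Delta x \equiv 0 \pmod 4$, and the companion statement then comes for free.

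To reach $2\Delta x = \sum \Delta(2x)$ I would sum the contributions of the edges of $T$ together with the two short attachment segments joining $V'$, $V''$ to their incident transversal endpoints. One clean simplification is immediate: a long edge changes $2x$ by $0$ or by $\pm 2$, and as $-2 \equiv 2 \pmod 4$ its contribution is insensitive to direction, so modulo $4$ the long edges contribute simply $2$ times the number of horizontal long edges. The genuinely delicate contributions are the $\pm 1$ from the short edges (and from an attachment at a left or right endpoint), whose signs must be tracked. The plan for this is to follow the transversal room by room: between two consecutive crossings it stays inside one room and its net displacement is an integer vector governed by the integer corners at which it enters and leaves, so the signed short-step total over that portion is controlled by the bounding crossings. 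Feeding in Lemma \ref{cross}, which fixes the parity of the number of crossings of each colour, one then pins down the residue of $2\Delta x$ modulo $4$. This last step -- reconciling the freedom in the signs of the short steps with the colour parities supplied by Lemmas \ref{ep} and \ref{cross} -- is where I expect the real work to lie, and it is precisely the colour information that compensates for that apparent freedom.
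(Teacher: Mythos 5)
Your coordinate change and the preliminary reductions are correct: in the rotated coordinates $u = x + y$, $w = x - y$ the diamond vertices of $D(a)$ do satisfy $(u, w) \equiv (0, 1)$ or $(1, 0) \pmod 2$ according as $a$ is odd or even, Lemma \ref{ep} then makes $\Delta u$ and $\Delta w$ both even, and your observation that the two asserted congruences become equivalent at that point (their difference being $2\Delta w \equiv 0 \pmod 4$) is a genuine, if modest, simplification -- the paper does not need it, because its argument preserves or flips both coordinate parities simultaneously. But everything after that is a plan rather than a proof, and the plan is where the entire content of the lemma lives. You have reduced the statement to $\Delta u + \Delta w \equiv 0 \pmod 4$, i.e.\ to controlling the signed sum of the $\pm 1$ contributions of the short edges modulo $4$, and you explicitly defer this ("where I expect the real work to lie"). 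The room-by-room telescoping you describe does not obviously close the gap: summing the displacements between consecutive crossings just telescopes back to the quantity you are trying to compute, and knowing from Lemma \ref{cross} only the \emph{parity} of the number of crossings of each colour gives you no handle on the individual signs of the short steps unless you also pin down, modulo $4$, the displacement contributed between consecutive crossings. That missing link is precisely the lemma.

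For comparison, the paper's proof supplies that link by looking not at the edges of $T$ but at the cells it visits. Writing $c_1, \ldots, c_{4\ell+1}$ for the sequence of cells (with two extra cells adjoined at the diamond vertices), it observes a clean dichotomy: if $c_i$ contains short edges of $T$ then $c_{i-1}$ and $c_{i+1}$ are corner-adjacent, so both coordinate parities flip, whereas if $c_i$ contains a long edge (a crossing) then $c_{i-1}$ and $c_{i+1}$ lie on opposite sides of $c_i$ and both parities are preserved. Restricting to the odd-indexed subsequence, there are $2\ell$ transitions in all (an even number, by Lemma \ref{t4}), and Lemma \ref{cross} makes the number of parity-preserving ones even; hence the number of parity-flipping ones is even too, and both parities return to their starting values. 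If you want to finish your version, you would need an analogue of this dichotomy -- some statement forcing the net $(u+w)$-displacement between consecutive crossings into a controlled residue class mod $4$ -- and as written your proposal does not contain one.
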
 

\begin{myproof} Let $t$ be that path transversal. Then $t$ intersects $4\ell$ edges of $G$, for some positive integer $\ell$, by Lemma \ref{t4}. Let us adjoin two new cells $c_1$ and $c_{4\ell + 1}$ to $G$, centered at $(x', y')$ and $(x'', y'')$, respectively. Let also $c_2$, $c_3$, $\ldots$, $c_{4\ell}$ be the cells of $G$ that $t$ visits on its way from $c_1$ to $c_{4\ell + 1}$, in order, and let $c_i = (x_i, y_i)$.

Since $t$ alternates between even and odd cells, the parity of $c_i$ depends only on the parity of $i$. Suppose, for concreteness, that $c_i$ is of the same parity as $i$ for all $i$. Then both of the diamond vertices attached to $t$ are green.

When $c_i$ contains one or two short edges of $t$, we get that $c_{i - 1}$ and $c_{i + 1}$ are adjacent by corner, and so $x_{i - 1} \not\equiv x_{i + 1} \pmod 2$ and $y_{i - 1} \not\equiv y_{i + 1} \pmod 2$. Otherwise, when $c_i$ contains a long edge of $t$, we get that $c_{i - 1}$ and $c_{i + 1}$ are adjacent to two opposite sides of $c_i$, and so $x_{i - 1} \equiv x_{i + 1} \pmod 2$ and $y_{i - 1} \equiv y_{i + 1} \pmod 2$.

In the latter case, $c_i$ also contains a crossing. When $i$ is even, $c_i$ is even as well, and so that crossing is red. Conversely, when $i$ is odd, the crossing will be green.

By Lemma \ref{cross}, there are an even number of even $i$ such that $c_i$ contains a crossing. So, in the sequence $c_1$, $c_3$, $c_5$, $\ldots$, $c_{4\ell + 1}$, there are an even number of transitions where the parities of both coordinates remain the same and an even number of transitions where the parities of both coordinates are flipped. The desired result now follows. \end{myproof}

We are ready to tackle Theorem \ref{d8}.

\begin{myproof}[Proof of Theorem \ref{d8} in the square case] Define $u$ and $v$ as in Lemma \ref{4uv} and its proof. Suppose, for the sake of contradiction, that both of $u$ and $v$ are odd. Then $n = u + v$ is even and $a_1 + a_2 + \cdots + a_k \equiv a_1 - a_2 + a_3 - a_4 + \cdots \equiv u \equiv v \pmod 2$ is odd. Let $s$ be the number of odd numbers among $a_1$, $a_2$, $\ldots$, $a_k$. So $s$ is odd as well.

Consider the green diamonds' vertices. The lower and upper sides of $Q$ each contain $s$ of them. By Lemma \ref{par}, no transversal can connect a diamond vertex in this set to a diamond vertex outside of this set. Hence, these $2s$ diamond vertices must be paired up by the transversals.

Since $s$ is odd, we get that some transversal must connect a green diamond vertex on the lower side of $Q$ and a green diamond vertex on the upper side of $Q$. Similarly, another transversal must connect a green diamond vertex on the left side of $Q$ and a green diamond vertex on the right side of $Q$. However, there is no way for these two transversals to get past each other within $Q$. We have arrived at a contradiction. \end{myproof}

\begin{myproof}[Proof of Theorem \ref{d8} in the general rectangular case] Let $m$ and $n$ be positive integers with $d = \gcd(m, n)$, $m = dm'$, and $n = dn'$, and let $E$ be the difference of two distinct cycles on the $m \times n$ grid with the same signature.

Suppose first that at least one of $m'$ and $n'$ is even. Then $8$ divides $4m'n'$, and $4m'n'$ divides $|E|$ by Corollary \ref{4uvmn}.

Suppose, otherwise, that both of $m'$ and $n'$ are odd. By Lemma \ref{lift} with $a = n'$ and $b = m'$, we get that there exist two distinct cycles on the $dm'n' \times dm'n'$ grid with the same signature and a difference of size $m'n'|E|$. By the square case of Theorem \ref{d8}, it follows that $8$ divides $m'n'|E|$. Therefore, $8$ divides $|E|$ as well. \end{myproof}

\section{Differences} \label{diff}

Here, for every rectangular grid we determine the greatest possible size of the difference between two cycles on it with the same signature.

We begin with the square case. Define $\delta_n$ by \[\delta_n = n^2 \bmod 8 = \begin{cases} 1 & \text{$n$ odd}\\ 0 & n \equiv 0 \mymod 4\\ 4 & n \equiv 2 \mymod 4 \end{cases}.\]

\begin{theorem} \label{gds} Let $n$ be a positive integer with $n \ge 2$. Then the greatest possible size of the difference between two cycles on the $n \times n$ grid with the same signature is $n^2 - \delta_n$. \end{theorem}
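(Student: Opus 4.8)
The plan is to split into the familiar sufficiency and necessity halves, using the structural machinery already built up. The size of any difference is $4uv$ with $u+v=n$ by Lemma \ref{4uv}, so the problem reduces to maximising $4uv$ subject to the extra divisibility and realisability constraints. Since $4uv = n^2 - (u-v)^2$, maximising $4uv$ means minimising $(u-v)^2$, i.e.\ making $u$ and $v$ as close to $n/2$ as possible. The three cases in $\delta_n$ are exactly the three residue classes governing how close we can get.

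First I would record the constructive (lower bound) half, invoking Lemma \ref{cs}: whenever $u$ is even and $u+v=n$ with $u,v>0$, a difference of size $4uv$ is realisable. For $n$ even write $n=2t$; if $n\equiv 0\pmod 4$ then $t$ is even, so take $u=v=t$ (with $u=t$ even), giving $4uv=n^2$, matching $n^2-\delta_n=n^2-0$. If $n\equiv 2\pmod 4$ then $t$ is odd, so $u=v=t$ is not allowed (both odd, ruled out by Theorem \ref{d8}); instead take $u=t+1,\ v=t-1$ with $u$ even, giving $4uv=4(t^2-1)=n^2-4$, matching $\delta_n=4$. For $n$ odd, $u+v=n$ forces opposite parities, so the closest legal split is $u,v=(n\pm1)/2$ with the even one chosen as $u$; then $4uv=n^2-1$, matching $\delta_n=1$. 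In each case Lemma \ref{cs} supplies genuine cycles with exactly that difference.

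For the upper bound I would argue that no larger value is attainable. By Lemma \ref{4uv} the size is $4uv=n^2-(u-v)^2$ for nonnegative integers with $u+v=n$, so it suffices to show $(u-v)^2\ge\delta_n$ in every realisable case. When $n$ is odd, $u-v$ is odd, hence $(u-v)^2\ge 1=\delta_n$. When $n\equiv 0\pmod 4$ the bound $\delta_n=0$ is vacuous. The one genuinely constrained case is $n\equiv 2\pmod 4$: here $u+v=n$ is even so $u\equiv v\pmod 2$; if both were odd, Theorem \ref{d8} (divisibility by $8$) would be violated since $n^2\equiv 4\pmod 8$ forces $4uv\equiv 4\pmod 8$, incompatible with $8\mid 4uv$ — more directly, Theorem \ref{d8} rules out $u=v$ and any odd-odd split, so $u,v$ must both be even, whence $u-v$ is even and nonzero (as $u=v$ is excluded), giving $(u-v)^2\ge 4=\delta_n$.

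The main obstacle, and the step deserving the most care, is pinning down the upper bound in the case $n\equiv 2\pmod 4$: one must argue cleanly that both $u=v$ and every odd-odd split are forbidden, leaving only even-even splits with $u\neq v$. This is precisely where Theorem \ref{d8} does the real work, since the weaker Theorem \ref{d4} would permit $4uv=n^2$ and give the wrong bound. Everything else is arithmetic bookkeeping on the parity of $u-v$ together with the already-proven constructions of Lemma \ref{cs}; so the crux is verifying that the divisibility-by-$8$ constraint translates exactly into the exclusion of the otherwise-optimal balanced split, forcing the deficit of $4$.
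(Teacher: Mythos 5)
Your proposal is correct and follows essentially the same route as the paper: the upper bound comes from $|E|=4uv\le n^2$ (Lemma \ref{4uv}) combined with the divisibility by $8$ from Theorem \ref{d8}, and the lower bound from Lemma \ref{cs} with suitable $u$ and $v$. The only difference is presentational -- the paper states the upper bound as ``the greatest multiple of $8$ not exceeding $n^2$'' rather than minimising $(u-v)^2$ case by case, and leaves the explicit choices of $u$ and $v$ implicit.
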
 

\begin{myproof} Let $C_1$ and $C_2$ be two such cycles with $E = C_1 \Delta C_2$. Define $u$ and $v$ as in Lemma \ref{4uv}. Then $|E| = 4uv \le (u + v)^2 = n^2$. On the other hand, $|E|$ must be divisible by $8$ by Theorem \ref{d8}. Thus $|E|$ is bounded from above by the greatest multiple of $8$ which does not exceed $n^2$.

When $n \ge 3$, the bound is attained, for example, by the construction of Lemma \ref{cs} with suitable $u$ and $v$. \end{myproof}

We continue with the general rectangular case. Setting $d = \gcd(m, n)$, define $\delta_{m, n}$ by \[\delta_{m, n} = \begin{cases} 1 & \text{$d$ odd}\\ 0 & d \equiv 0 \mymod 4\\ 0 & d \equiv 2 \mymod 4,\\ & m \ge 4, n \ge 4,\\ & \text{and $mn/d^2$ even}\\ 4 & \text{otherwise} \end{cases}.\]

\begin{theorem} \label{gdr} Let $m$ and $n$ be positive integers with $m \ge 2$, $n \ge 2$, $d = \gcd(m, n)$, $m = dm'$, and $n = dn'$. Then the greatest possible size of the difference between two cycles on the $m \times n$ grid with the same signature is $mn - \delta_{m, n}m'n'$. \end{theorem}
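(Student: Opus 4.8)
The plan is to bound $|E| = |C_1 \Delta C_2|$ from above using Corollary \ref{4uvmn} together with the divisibility of Theorem \ref{d8}, and then to realise the bound by explicit constructions. The difference $E$ is totally even, so by Corollary \ref{4uvmn} we have $|E| = 4uvm'n'$ for some nonnegative integers $u, v$ with $u + v = d$, and by Theorem \ref{d8} we have $8 \mid |E|$, that is, $uvm'n'$ is even. I would first maximise $4uv$ subject to $u + v = d$ under this parity constraint. Since $4uv$ is a downward parabola peaking at $u = v = d/2$, the unconstrained maximiser is $u = v = d/2$ when $d$ is even and $\{u, v\} = \{(d-1)/2, (d+1)/2\}$ when $d$ is odd, and when the parity constraint rules that peak out one steps to the nearest admissible split. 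A short case analysis on $d \bmod 4$ and on the parity of $m'n'$ then shows that the constrained maximum of $4uvm'n'$ equals exactly $mn - \delta_{m, n}m'n'$ in every grid with $\min(m, n) \ge 3$.

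The one place where this maximisation overshoots the claimed value is when $d = 2$, $m'n'$ is even, and $\min(m, n) = 2$ (forcing $m = 2$ and $n \equiv 0 \pmod 4$, or the symmetric variant): there the peak $(u, v) = (1, 1)$ is admissible and suggests $|E| = mn$, whereas the grid in fact admits no ambiguity at all. I would dispose of this by invoking condition (a) of Theorem \ref{ambs}, which gives uniqueness whenever $\min(m, n) \le 2$, so that $|E| = 0$; a direct check shows $mn - \delta_{m, n}m'n' = 0$ in exactly these grids. In the remaining uniqueness grids, namely $d = 1$ and $d = 2$ with $m \equiv n \equiv 2 \pmod 4$, the maximisation already returns $0$ on its own, with no appeal to Theorem \ref{ambs} needed. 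This is precisely how the formula quietly reproduces Theorem \ref{ambs}.

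For the matching constructions I would split on the residue of $d$, carrying them out only when the target $mn - \delta_{m, n}m'n'$ is positive. When $d$ is odd, or $d \equiv 0 \pmod 4$, or $d \equiv 2 \pmod 4$ with $m'n'$ odd (whence $d \ge 6$ once the target is positive), the optimal split $(u_0, v_0)$ has an even part, so Lemma \ref{cs} produces two cycles on the $d \times d$ grid with difference $4u_0v_0$, and Lemma \ref{lift} with $(a, b) = (m', n')$ lifts them to the $m \times n$ grid with difference $4u_0v_0m'n' = mn - \delta_{m, n}m'n'$; this is also how one sees Theorem \ref{gds} as the special case $m = n$. The remaining case, $d \equiv 2 \pmod 4$ with $m'n'$ even (so the target is the full $mn$), is the crux: here the optimal split is $u_0 = v_0 = d/2$ with both parts odd, so Lemma \ref{cs} does not apply on the $d \times d$ grid, and in fact the $d \times d$ grid cannot reach $d^2$ at all, its own maximum being only $d^2 - 4$. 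Instead I would apply Lemma \ref{c22} directly on the $m \times n$ grid, checking its hypotheses: $m$ and $n$ are even (as $d$ is), both are at least $4$ (since $\min(m, n) \ge 3$ and both are even), and one of them is divisible by $4$ (since $m'n'$ even forces $m'$ or $n'$ even).

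The main obstacle is exactly this last reconciliation. The divisibility-by-$8$ constraint blocks the naive reduction to the square case precisely on the grids with $d \equiv 2 \pmod 4$, so the single uniform bound $mn - \delta_{m, n}m'n'$ must be realised by two genuinely different constructions, Lemma \ref{cs} followed by lifting in most cases versus the stand-alone Lemma \ref{c22} in the crux case. The care lies in the bookkeeping that routes each residue class of $d$, each parity of $m'n'$, and the $\min(m, n) = 2$ exception to the correct construction or uniqueness statement, and in confirming that the even-part requirement of Lemma \ref{cs} and the divisibility hypotheses of Lemma \ref{c22} are met in every branch.
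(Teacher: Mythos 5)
Your proposal is correct and follows essentially the same route as the paper: the upper bound via Corollary \ref{4uvmn} and Theorem \ref{d8}, and attainment via Lemma \ref{cs} lifted by Lemma \ref{lift} in the cases where $\delta_d = \delta_{m,n}$ versus Lemma \ref{c22} directly when $d \equiv 2 \pmod 4$ and $\delta_{m,n} = 0$. Your explicit treatment of the $\min(m,n) = 2$ overshoot via condition (a) of Theorem \ref{ambs} spells out a detail the paper leaves implicit, but the argument is the same.
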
 

\begin{myproof} That this is an upper bound is established as in the proof of Theorem \ref{gds}, except that instead of Lemma \ref{4uv} we must refer to Corollary \ref{4uvmn}.

That this upper bound is attained, in the cases where it is nonzero, follows by Lemma~\ref{lift} together with the construction in the proof of Theorem \ref{gds} when $\delta_d = \delta_{m, n}$; and it follows by Lemma \ref{c22} otherwise, when $d \equiv 2 \pmod 4$, $\delta_d = 4$, and $\delta_{m, n} = 0$. \end{myproof}

\section{Multiplicities} \label{mult}

Here we study the multiplicities of signatures.

We begin with general planar graphs. In this setting, every positive integer occurs as a multiplicity.

\begin{theorem} \label{mpg} Let $s$ be a positive integer. Then there exists a signature in a planar graph with multiplicity $s$. \end{theorem}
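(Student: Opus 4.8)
The goal is to show that for every positive integer $s$, there is a planar graph carrying a signature with exactly $s$ distinct solutions. The natural strategy is to build a graph whose cycles through a certain region are parametrized in a controlled way, so that the signature constraints force exactly $s$ cycles to fit. The plan is to exploit the flexibility already demonstrated in Section \ref{trans}: the construction there produces, for each $k$, a graph with two cycles sharing a signature whose difference decomposes into $k$ independent ``switching'' pieces, each of which can be toggled on or off. If each such piece can be flipped independently without disturbing any clue, one immediately gets $2^k$ solutions, which gives multiplicities that are powers of two but not every integer.

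To hit an arbitrary $s$, the cleaner approach is to design a single gadget with a tunable number of solutions and then chain or combine gadgets. First I would construct an elementary ``multiplicity gadget'': a planar graph $G_0$ together with a signature $S_0$ such that the solutions to $S_0$ are in bijection with some small explicit set, say the cycles must route through one of two parallel channels, giving multiplicity exactly $2$, or through one of $t$ channels, giving multiplicity exactly $t$. The key is that a planar graph can present a ``fan'' of $t$ disjoint parallel routes between two fixed points, with clues forcing the solution loop to select exactly one route; every clued face then sees the same number of loop-edges regardless of which route is chosen. Concretely, one places $t$ internal routes sharing common entry and exit edges and arranges the surrounding faces so that each route contributes identical local edge-counts. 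This yields a graph with signature multiplicity exactly $t$, and taking $t = s$ finishes the construction in one stroke.

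The main obstacle is verifying that all $t$ routes really produce the \emph{same} signature, i.e.\ that every finite face (including those bordering the fan of routes) records the same clue no matter which route the loop takes. The routes themselves differ, so the faces adjacent to the routes are exactly where one must be careful: a face lying between route $i$ and route $i+1$ will contain edges of whichever route passes along its boundary. The fix is to make the routes ``parallel translates'' so that, for each internal face, exactly one of its two flanking routes is ever used by a given solution, and that route contributes a fixed number of boundary edges to that face independent of $i$. I would therefore design the fan as $t$ edge-disjoint but combinatorially identical paths, with dummy faces inserted between consecutive paths whose clues count $0$ (so they are insensitive to which neighbouring route is active), while the two faces capping the fan absorb the fixed contribution of the chosen route.

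A second, more robust route to the same end — and the one I would ultimately prefer to write up, since it sidesteps the face-by-face bookkeeping — is to start from a known graph with exactly two solutions (such as the $3\times 3$ example of Figure \ref{amb-fig}, or the $4k$-difference family built right after Theorem \ref{d4}) and glue $s-1$ copies of a rigid ``forced'' piece in series with one flexible piece, arranging that the flexible piece alone admits $s$ completions. In practice this means taking the $4k$-construction's two-cycle family and modifying the cyclic gadget so that, instead of a binary choice at each stage, there is a single stage offering $s$ mutually exclusive local configurations all consistent with one fixed clue assignment. Checking that these $s$ local configurations share a signature reduces to checking equality of a handful of local clue values, which is the routine part; the creative content is the choice of gadget, and that is exactly where the fan-of-$t$-routes idea supplies the cleanest answer.
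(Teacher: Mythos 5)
There is a genuine gap at the heart of both of your proposed routes: the ``fan of $t$ mutually exclusive routes sharing a signature'' gadget does not exist for $t \ge 3$, and this is precisely the difficulty the theorem has to overcome. If $r_1, r_2, \ldots, r_t$ are $t$ pairwise internally disjoint routes between two fixed attachment points, drawn in the plane in their natural nested order, then summing the clues over all faces lying in the strip between $r_i$ and $r_{i+1}$ counts (among other things) every cycle edge lying on $r_i$ or $r_{i+1}$. When the solution uses route $r_i$ this sum is at least the length of $r_i$, hence positive; when the solution uses some $r_j$ with $j \notin \{i, i+1\}$ the sum is $0$. So for $t \ge 3$ no two of the routes can produce the same signature, no matter how the strip is subdivided into faces. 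Your proposed fixes make this worse rather than better: a buffer face with clue $0$ adjacent to a route outright forbids that route from ever being used. The same objection applies to your ``second, more robust route,'' since a single local stage offering $s \ge 3$ mutually exclusive configurations with identical clues on all incident faces is just the fan again in miniature. (This is not a technicality -- it is why multiplicity $3$ is genuinely hard and, on square grids, first appears only at size $8 \times 8$, cf.\ Proposition \ref{m3}.)

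The paper's construction evades this obstruction by a different mechanism, which your proposal does not contain. It builds a ring of $s$ identical gadgets, each admitting exactly \emph{two} local configurations (types I and II) consistent with the clues; two-way local choices are easy to make signature-preserving, as Section \ref{trans} already shows. This yields $2^s$ \emph{weak} solutions (pseudocycles), and the count is then cut down to exactly $s$ by a global connectivity argument: if no gadget is of type I the weak solution splits into two disjoint cycles, if two or more gadgets are of type I it contains a proper subcycle, and only the $s$ choices with exactly one gadget of type I give a single true cycle. Some idea of this kind -- using the requirement that a solution be one connected cycle to select an $s$-element subset of an exponential family of weak solutions, rather than trying to build $s$ interchangeable routes directly -- is what your write-up is missing, and without it the argument does not go through.
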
 

\begin{myproof} The case of $s = 1$ is clear, so suppose, from now on, that $s \ge 2$.

Construct a planar graph $G$ as follows: The vertices of $G$ are $p_{i, j}$ with $1 \le i \le 8$ and $1 \le j \le s$. Both indices run cyclically, so that, for example, $p_{1, 1}$ is the same vertex as $p_{9, s + 1}$. The edges of $G$ are $p_{i, j}p_{i + 1, j}$ for all $i$ and $j$, $p_{i, j}p_{i + 2, j}$ for all even $i$ and all $j$, and $p_{3, j}p_{1, j + 1}$ and $p_{5, j}p_{7, j + 1}$ for all $j$. Thus there are a total of $8s$ vertices and $14s$ edges in $G$. We embed $G$ in the plane so that the boundary of the exterior face is $p_{7, 1}p_{6, 1}p_{5, 1}p_{7, 2}p_{6, 2}p_{5, 2} \ldots p_{7, s}p_{6, s}p_{5, s}$.

We clue each triangular face of $G$ of the form $p_{i - 1, j}p_{i, j}p_{i + 1, j}$ with a $1$ when $i \equiv 1 \pmod 4$ and with a $2$ when $i \equiv 3 \pmod 4$; each quadrilateral face of the form $p_{2, j}p_{4, j}p_{6, j}p_{8, j}$ with a $2$; each hexagonal face of the form $p_{3, j}p_{4, j}p_{5, j}p_{7, j + 1}p_{8, j + 1}p_{1, j + 1}$ with a $4$; and the remaining finite face $p_{1, 1}p_{2, 1}p_{3, 1}p_{1, 2}p_{2, 2}p_{3, 2} \ldots p_{1, s}p_{2, s}p_{3, s}$ with a $2s$.

\begin{figure}[ht] \centering \includegraphics{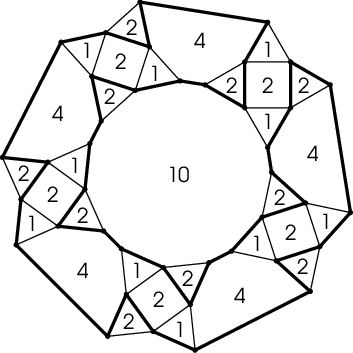} \caption{} \label{wreath} \end{figure}

For example, Figure \ref{wreath} shows this construction in the case when $s = 5$.

Consider any weak solution $C$ to that signature. Then $p_{2, j}p_{8, j}$ is not an edge of $C$ because otherwise the clues of the two triangular faces and the hexagonal face containing $p_{8, j}$ cannot be satisfied simultaneously. Similarly, $p_{4, j}p_{6, j}$ is not an edge of $C$ because of the two triangular faces and the hexagonal face containing $p_{4, j}$. Some straightforward casework now shows that there are only two ways to resolve the edges incident with $p_{1, j}$, $p_{2, j}$, $\ldots$, $p_{8, j}$: Either $p_{3, j - 1}p_{1, j}p_{8, j}p_{6, j}p_{7, j}p_{5, j - 1}$ and $p_{1, j + 1}p_{3, j}p_{2, j}p_{4, j}p_{5, j}p_{7, j + 1}$ are subpaths of $C$, when we say that $j$ is of type I; or $p_{3, j - 1}p_{1, j}p_{2, j}p_{4, j}p_{3, j}p_{1, j + 1}$ and $p_{5, j - 1}p_{7, j}p_{8, j}p_{6, j}p_{5, j}p_{7, j + 1}$ are subpaths of $C$, when we say that $j$ is of type II.

There are the following cases to consider:

\smallskip

\emph{Case 1}. There are no $j$ of type I. Then $C$ is the disjoint union of two cycles of length $4s$ each.

\smallskip

\emph{Case 2}. There are two or more $j$ of type I. Suppose that $j'$ is of type I, all of $j' + 1$, $j' + 2$, $\ldots$, $j'' - 1$ are of type II, and $j''$ is of type I. Then $C$ contains a cycle with vertices $p_{i, j'}$ for $i \in \{2, 3, 4, 5\}$, $p_{i, j''}$ for $i \in \{1, 6, 7, 8\}$, and $p_{i, j}$ for all $i$ and $j$ with $j' < j < j''$. Thus $C$ cannot be a true cycle.

\smallskip

\emph{Case 3}. Exactly one $j$ is of type I. Then $C$ is a true cycle.

\smallskip

Therefore, exactly $s$ cycles in $G$ satisfy our signature, as needed. \end{myproof}

We go on to rectangular grids.

It will be instructive to consider pseudocycles first. Once again, every positive integer occurs as a multiplicity.

\begin{theorem} \label{mp} Let $s$ be a positive integer. Then there exist pseudosignatures on arbitrarily large square grids which are satisfied by exactly $s$ pseudocycles. \end{theorem}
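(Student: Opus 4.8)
The plan is to mimic the structure of the general-planar-graph construction in Theorem \ref{mpg}, but realise it on square grids. The key idea there was to build a ``wreath'' of $s$ identical gadgets arranged cyclically, where each gadget can locally be resolved in exactly two ways (``type I'' and ``type II''), the clues force the number of type-I gadgets to be $0$ or $\ge 1$, and exactly the configurations with precisely one type-I gadget yield a single pseudocycle. Since we are now allowed weak solutions (pseudocycles) rather than true cycles, our task is actually easier than in Theorem \ref{mpg}: we do not need to arrange for the solution to be connected, so we have more freedom. First I would design a single square-grid gadget---a small fixed-size block of cells with a prescribed partial signature---that admits exactly two local completions, together with a way to chain $s$ copies of this gadget around a large square annulus so that adjacent gadgets interlock. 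The natural approach is to place the gadgets around the border of a large $N \times N$ grid (with $N$ growing with $s$), leaving the bulk of the interior clued so rigidly (for instance, all-$0$ clues, which forbid any loop edges in a region) that the only freedom lies in the ring of gadgets.

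The key steps, in order, are as follows. First I would exhibit the elementary gadget explicitly: a small rectangular patch of cells whose boundary-crossing pattern with the loop is fixed by neighbouring clues, but whose internal routing has exactly two options, analogous to the type I / type II dichotomy. Second, I would specify the clueing of the entire grid so that (i) a large central region is forced to be loop-free (e.g.\ via $0$-clues or a surrounding corridor of $0$'s), and (ii) the $s$ gadgets are laid out consecutively around a closed track, each sharing its routing-interface with its two cyclic neighbours exactly as the $p_{i,j}$/$p_{i,j+1}$ identifications did in Theorem \ref{mpg}. Third, I would carry out the local casework showing each gadget is in one of the two types and that the interface constraints propagate identically to the planar-graph case. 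Fourth, I would run the same global counting argument: a pseudocycle is obtained precisely when the cyclic sequence of types is legal, and the count of legal sequences is exactly $s$. Because the setting is pseudosignatures, I can allow the solution to split into several disjoint loops, which removes the need for Case 2's connectivity obstruction and should make ``exactly $s$ solutions'' come out directly from the same type-bookkeeping.

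I expect the main obstacle to be the gadget design itself: on a grid, every face is a unit square with at most four incident loop edges, so a single cell is far less expressive than the triangular, quadrilateral, and hexagonal faces used in Theorem \ref{mpg}. The binary ``either/or'' behaviour that a single hexagon provided must now be synthesised from a cluster of several unit cells with carefully chosen clues (plausibly a mixture of $1$'s, $2$'s, and $3$'s arranged so that two incompatible local loop patterns are the only survivors). Verifying that this cluster really has exactly two completions, and that those two completions mesh correctly with the clueing of the adjacent clusters and with the forced loop-free interior, is where the bulk of the careful (but routine) checking will go. A secondary concern is ensuring the construction works on a genuine square grid of the stated growing size; I would parametrise the side length as a linear function of $s$ (one gadget per unit of perimeter, padded by a forced interior) and note that taking larger square grids only adds more forced $0$-clued cells, so the same signature pattern embeds in arbitrarily large squares, giving the ``arbitrarily large'' conclusion.
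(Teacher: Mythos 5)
Your plan fails at the counting step, and the failure is instructive: in the wreath construction of Theorem \ref{mpg}, \emph{all} $2^s$ assignments of types to the gadgets produce valid pseudocycles fitting the signature; the count drops to $s$ only because the true-cycle requirement discards every assignment whose result is disconnected (Cases 1 and 2 of that proof). Since Theorem \ref{mp} counts pseudocycles, dropping the connectivity obstruction does not ``make exactly $s$ come out directly'' --- it makes $2^s$ come out. A closed track of $s$ independent binary gadgets can only ever give a power of two, so the same type-bookkeeping cannot yield multiplicity $s$ for general $s$; you would need an additional mechanism that kills all but $s$ of the $2^s$ locally consistent configurations, and you have not proposed one.

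There is also a structural obstruction to the gadget layout itself. On a square grid the difference of two weak solutions to the same pseudosignature is a totally even set, and by Lemmas \ref{u}, \ref{basis}, and \ref{symm} every nonempty totally even set is a symmetric difference of diamond sets: it is determined by its trace on the lower side, is symmetric about both diagonals and the centre, and occupies diagonal bands running through the bulk of the grid. Consequently no nonempty difference between two weak solutions can be confined to an annulus near the border while a large central region stays loop-free and unambiguous, and the binary choices at distinct ``gadgets'' cannot be independent --- they are all coupled through the diamond toggles. The paper's proof embraces exactly this rigidity: it builds one pseudocycle $C_1$ supported near the region between two diamonds $D(k)$ and $D(\ell)$ with $\ell - k = 2s - 4$, observes that any other weak solution must equal $C_1 \Delta B(a_1) \Delta \cdots \Delta B(a_t)$ with every $a_i \in [k - 1, \ell + 1]$, and then uses local clue violations at the crossing point of two distinct diamond boundaries (plus a parity argument excluding odd indices) to show that the only admissible toggles are the single even diamonds $B(k), B(k + 2), \ldots, B(\ell)$, giving $1 + (s - 1) = s$ weak solutions. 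That ``at most one diamond may be toggled'' step is the real substitute for your type-bookkeeping, and it is not a local-gadget argument.
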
 

\begin{myproof} Since the case of $s = 1$ is clear, suppose, from now on, that $s \ge 2$.

Let $n$ be an even positive integer with $n \ge 2s$ and let $G$ be the $n \times n$ grid. Fix two even positive integers $k$ and $\ell$ with $2 \le k \le \ell \le n - 2$ and $\ell - k = 2s - 4$, and denote the convex hull of the diamonds $D(k)$ and $D(\ell)$ by $\Theta$. Let $p_1$, $p_2$, $\ldots$, $p_k$ be the vertices of $G$ on the south-west side of $\Theta$, in order as we go south-east, so that $p_i = (i, k - i + 1)$. Similarly, let $q_1$, $q_2$, $\ldots$, $q_{n - \ell}$ be the vertices of $G$ on the south-east side of $\Theta$, in order as we go north-east, so that $q_i = (\ell + i, i)$.

Take all horizontal edges of $G$ contained inside of $\Theta$. To them, add all edges of $G$ of the form $(1, i)$---$(1, i + 1)$ with $i$ odd and $k < i < \ell$, as well as their images under central symmetry with respect to the center of $G$. Construct also the paths $p_i$---$(i, k - i)$---$p_{i + 1}$ for all odd $i$; the paths $q_i$---$(\ell + i + 1, i)$---$q_{i + 1}$ for all odd $i$; and the images of all of these paths under central symmetry with respect to the center of $G$.

Let $C_1$ be the resulting subgraph of $G$. Furthermore, for all $i$ with $2 \le i \le s$, define $C_i = C_1 \Delta B(k + 2i - 4)$. It is straightforward to see that each $C_i$ is a pseudocycle in $G$ and that all of these pseudocycles have the same pseudosignature $S$.

\begin{figure}[ht] \null \hfill \begin{subfigure}{110pt} \centering \includegraphics{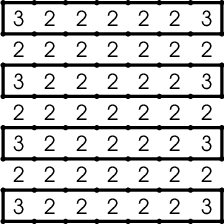} \caption{} \label{pseudo-a} \end{subfigure} \hspace{40pt} \begin{subfigure}{110pt} \centering \includegraphics{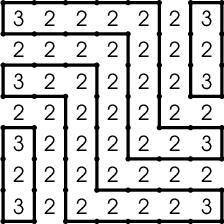} \caption{} \label{pseudo-b} \end{subfigure} \hfill \null\\ \vspace{\baselineskip}\\ \null \hfill \begin{subfigure}{110pt} \centering \includegraphics{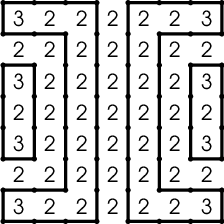} \caption{} \label{pseudo-c} \end{subfigure} \hspace{40pt} \begin{subfigure}{110pt} \centering \includegraphics{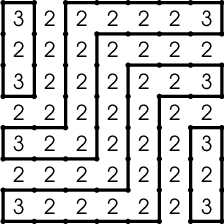} \caption{} \label{pseudo-d} \end{subfigure} \hfill \null \caption{} \label{pseudo} \end{figure}

For example, Figure \ref{pseudo} shows this construction with $s = 4$, $n = 8$, $k = 2$, and $\ell = 6$.

We are left to check that $S$ does not admit any other weak solutions. Let $C$ be a pseudocycle in $G$ with pseudosignature $S$ and let $B(a_1) \Delta B(a_2) \Delta \cdots \Delta B(a_t)$ be the diamond decomposition of $C \Delta C_1$. The clues of $S$ with value $0$ in the lowermost row of $G$ imply that $k - 1 \le a_i \le \ell + 1$ for all $i$.

Suppose, for the sake of contradiction, that there exist two distinct values $u$ and $v$ with $u < v$ among $a_1$, $a_2$, $\ldots$, $a_t$. Let $w$ be the intersection point of the south-east side of $D(u)$ and the south-west side of $D(v)$. There are the following cases to consider:

\smallskip

\emph{Case 1}. The point $w$ is the center of a cell. Then this cell is clued with a $2$ in $S$ but it contains either $0$ or $4$ edges of $C$.

\smallskip

\emph{Case 2}. The point $w$ is a vertex of $G$. Since $w$ is incident with two horizontal edges of $C_1$, either all edges of $G$ incident with $w$ belong to $C$, or none of them do. The former subcase cannot occur because $w$ cannot be of degree $4$ in $C$, and so $C$ does not visit $w$.

Since Theorem \ref{len} applies to pseudocycles as well, $C$ must visit the same number of vertices of $G$ as $C_1$. On the other hand, because of the clues of $S$ with values $0$ and $1$, we get that the vertices of $C$ must be a subset of the vertices of $C_1$. Hence, $C$ cannot miss a vertex visited by $C_1$.

\smallskip

In both cases, we arrive at a contradiction. Thus either $C = C_1$ or $C \Delta C_1 = B(h)$ for some $h$ with $k - 1 \le h \le \ell + 1$. Supposing the latter, if $h$ is odd, then the cell $(3/2, h + 1/2)$ of $G$ will contain only one edge of $C$, whereas it is clued with a $3$ in $S$. Therefore, $h$ must be even and $C$ must coincide with one of $C_2$, $C_3$, $\ldots$, $C_s$. \end{myproof}

We continue with true cycles. In this setting, the question of which positive integers occur as multiplicities becomes much more difficult.

Every grid with at least two vertices along each side admits a signature with multiplicity $1$. For example, the boundary of each face -- including the exterior one -- is a cycle whose signature determines it uniquely.

For multiplicity $2$, by setting $u = 2$ in the construction of Lemma \ref{cs} we get one infinite family of Slitherlink signatures where it is particularly straightforward to verify that no additional solutions are possible. (In fact, all signatures constructed in the proof of Lemma \ref{cs} are of multiplicity exactly $2$. This can be shown as in the proofs of Propositions \ref{m4} and \ref{m3}, but with Lemma \ref{coin} applied to either the right or the upper side of the grid.)

\begin{figure}[ht] \null \hfill \begin{subfigure}{95pt} \centering \includegraphics{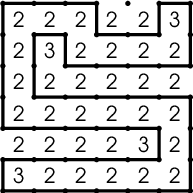} \caption{} \label{four-a} \end{subfigure} \hspace{40pt} \begin{subfigure}{95pt} \centering \includegraphics{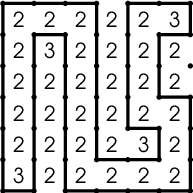} \caption{} \label{four-b} \end{subfigure} \hfill \null\\ \vspace{\baselineskip}\\ \null \hfill \begin{subfigure}{95pt} \centering \includegraphics{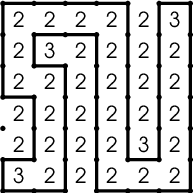} \caption{} \label{four-c} \end{subfigure} \hspace{40pt} \begin{subfigure}{95pt} \centering \includegraphics{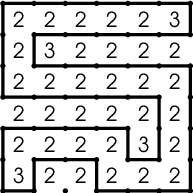} \caption{} \label{four-d} \end{subfigure} \hfill \null \caption{} \label{four} \end{figure}

The next easiest case is multiplicity $4$. An exhaustive computer search demonstrates that the smallest square grid which admits this multiplicity is the one of size $7 \times 7$; furthermore, modulo the symmetries of the grid, there exists a unique such signature on it, shown in Figure \ref{four}. It coincides with the case $k = 1$ of the construction described below.

\begin{proposition} \label{m4} There exist Slitherlink signatures of multiplicity $4$ on arbitrarily large square grids. \end{proposition}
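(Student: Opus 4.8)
The plan is to build, for each $k \ge 1$, a single base cycle $C_1$ on a square grid (which can be made arbitrarily large by letting $k$ grow) together with two independent ``toggles,'' each realised by a diamond set, so that flipping them in all four combinations produces four distinct cycles sharing one signature; I would then show that these four are the only solutions.

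For the construction I would take two even diamond radii $u < w$ and arrange $C_1$ in the manner of Lemma \ref{cs}, but with two nested active regions: the inner diamond $D(u)$ and the annulus between $D(u)$ and $D(w)$. Since $B(u) \subseteq B(w)$ as edge sets, the difference $B(u) \Delta B(w)$ is exactly the set of edges of $G$ lying in that annulus, so the inner region and the annulus are spatially disjoint. Inside each of them I would lay down the zig-zag pattern from the proof of Lemma \ref{cs}, chosen so that $C_1$ meets every cell in equally many edges of the toggle and non-edges of the toggle; then $C_1 \Delta B(u)$ (flip the inner region), $C_1 \Delta (B(u) \Delta B(w))$ (flip the annulus), and $C_1 \Delta B(w)$ (flip both) are each again a genuine cycle with exactly the clues of $C_1$. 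Hence the four cycles $C_1 \Delta E$ with $E \in \{\varnothing,\, B(u),\, B(w),\, B(u)\Delta B(w)\}$ all carry a common signature $S$. They are pairwise distinct because, by the proof of Lemma \ref{basis}, $B(u)$ and $B(w)$ meet the lower side of $G$ in the single distinct edges $(u, 1)$---$(u + 1, 1)$ and $(w, 1)$---$(w + 1, 1)$, so the four lower-side patterns differ. This gives multiplicity at least $4$, and the case $k = 1$ recovers the $7 \times 7$ signature of Figure \ref{four}.

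To pin the multiplicity down to exactly $4$, I would reduce the count to a count of admissible lower-side patterns. Any solution $C$ has $E = C \Delta C_1$ totally even, so by Lemma \ref{basis} it has a diamond decomposition $E = B(a_1) \Delta \cdots \Delta B(a_t)$, whose restriction to the lower side is precisely the set of edges at positions $a_1, \ldots, a_t$. By Lemma \ref{u} a totally even set is determined by this restriction, and by Lemma \ref{coin} two solutions agreeing on the lower side coincide; hence distinct solutions correspond to distinct index sets $A = \{a_1, \ldots, a_t\}$, and the multiplicity of $S$ equals the number of index sets $A$ for which $C_1 \Delta \bigl( \Delta_{a \in A} B(a) \bigr)$ is a cycle with signature $S$. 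It therefore suffices to prove that the only admissible $A$ are $\varnothing$, $\{u\}$, $\{w\}$, and $\{u, w\}$.

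The main obstacle is precisely this uniqueness half, which I expect to treat by the same propagation argument that underlies Lemmas \ref{u} and \ref{side}. Working outward from a side and across the rooms cut out by the diamond boundaries, I would show that any index $a \notin \{u, w\}$ in the decomposition forces some cell whose number of traversed edges no longer matches its clue: away from the two designated radii the clues are made ``tight,'' leaving no room to insert or delete a diamond, so the per-cell balance condition fails. A secondary point is to check that each of the four admissible $A$ yields a connected cycle rather than a disconnected pseudocycle; this I would secure, as in Lemma \ref{cs}, by splicing the inner and annular loops into the rest of $C_1$ at fixed edges common to all four cycles. The bulk of the work is the cell-by-cell casework forcing $A \subseteq \{u, w\}$, and designing the clue pattern so that it is simultaneously tight outside the two active regions yet permissive at both toggles is the delicate part.
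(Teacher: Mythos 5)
There is a genuine gap, and it sits at the foundation of your existence half. You assert that for two radii $u < w$ one has $B(u) \subseteq B(w)$, so that $B(u) \Delta B(w)$ is an ``annulus'' spatially disjoint from the inner region $D(u)$. Under the paper's definition this is false: $D(i)$ is the tilted rectangle inscribed in the frame with vertices $(1/2, i+1/2)$, $(i+1/2, 1/2)$, $(n-i+1/2, n+1/2)$, $(n+1/2, n-i+1/2)$, i.e.\ $D(i) = \{\, i+1 \le x+y \le 2n+1-i,\ |y-x| \le i \,\}$. For $0 < u < w < n$ neither diamond contains the other (each $B(i)$ meets the lower side in the single edge $(i,1)$---$(i+1,1)$, so $B(u) \subseteq B(w)$ is impossible), and $D(u)$, $D(w)$ cross one another rather than nest. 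Consequently your two ``toggles'' are not supported on disjoint regions, and the argument that flipping them in all four combinations preserves every clue collapses: for $C_1$ and $C_1 \Delta B$ to have the same signature you need, in every cell, exactly half of the $B$-edges of that cell to lie on $C_1$, and you need this simultaneously for $B(u)$, $B(w)$, and $B(u)\Delta B(w)$, including in the cells where the two diamond boundaries cross. Arranging that, while also keeping all four subgraphs connected $2$-regular cycles, is exactly the delicate construction the paper carries out (with the adjacent radii $2$ and $3$, not two even radii, and with paths tailored to the crossing region); it is not obtained by running the Lemma \ref{cs} zig-zag in two disjoint zones, because no such disjoint zones exist.

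Your reduction of the uniqueness half is sound in outline and matches the paper: any solution $C$ has $C \Delta C_1$ totally even, hence a diamond decomposition by Lemma \ref{basis}, so the multiplicity is the number of admissible index sets $A$, and it suffices to force $A \subseteq \{u, w\}$ and then invoke Lemma \ref{coin}. But as written this half is only a plan: no clue pattern is exhibited, and the claim that the clues can be made ``tight'' off the two designated radii is precisely what must be proved. The paper does it concretely: $0$-clues in the bottom row exclude every lower-side edge $(a,1)$---$(a+1,1)$ with $a \notin \{2,\ldots,6\}$ from $C$ (killing all other diamond indices), short casework on four specific cells near the lower-left corner pins down $C$ on the lower side to one of the four intended patterns, and Lemma \ref{coin} finishes. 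So the overall strategy is the paper's, but both the construction (because of the false nesting claim) and the uniqueness argument remain to be repaired and carried out before this counts as a proof of Proposition \ref{m4}.
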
 

\begin{myproof} Let $k$ be a positive integer, let $n = 4k + 3$, and let $G$ be the $n \times n$ grid.

Denote by $\Theta$ the convex hull of the diamonds $D(2)$ and $D(3)$. Let $p_1 = (1, 2)$ and $p_2 = (2, 1)$ be the two vertices of $G$ on the south-west side of $\Theta$ and let $q_1$, $q_2$, $\ldots$, $q_{n - 3}$ be the vertices of $G$ on the south-east side of $\Theta$, in order as we go north-east, so that $q_i = (i + 3, i)$.

Take all horizontal edges of $G$ contained inside of $D(2) \cap D(3)$. To them, add all horizontal edges of $G$ contained inside of $\Theta$ which are incident with a vertex of $G$ on one of the south-west and south-east sides of $\Theta$, as well as all vertical edges of $G$ contained inside of $\Theta$ which are incident with a vertex of $G$ on one of the north-west and north-east sides of $\Theta$. Construct also the path $p_1$---$(1, 1)$---$p_2$; the paths $q_i{\sim}(i + 6, i){\sim}q_{i + 3}$ and $q_{i + 1}$---$(i + 5, i + 1)$---$q_{i + 2}$ for all $i$ with $i \equiv 1 \pmod 4$; and the images of all of these paths under central symmetry with respect to the center of $G$.

Let $C_1$ be the resulting subgraph of $G$. Define also $C_2 = C_1 \Delta B(2)$, $C_3 = C_1 \Delta B(3)$, and $C_4 = C_1 \Delta B(2) \Delta B(3)$. It is straightforward to see that $C_1$, $C_2$, $C_3$, and $C_4$ are four cycles on $G$ with the same signature $S$.

We are left to show that $S$ does not admit any other solutions. Let $C$ be any solution to it. The clues of $S$ with value $0$ in the lowermost row of $G$ imply that the edges of the subgraph $(7, 1){\sim}(n, 1)$ of $G$ do not belong to $C$. Once we know this, some casework using the clues of $S$ in the cells $(3/2, 3/2)$, $(11/2, 3/2)$, $(11/2, 5/2)$, and $(13/2, 3/2)$ confirms that $C$ must agree with one of $C_1$, $C_2$, $C_3$, and $C_4$ on the subgraph $(1, 1){\sim}(7, 1)$ of $G$. By Lemma \ref{coin}, we conclude that $C = C_i$ for some $i$. \end{myproof}

\begin{figure}[ht] \null \hfill \begin{subfigure}{110pt} \centering \includegraphics{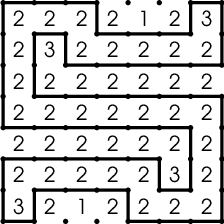} \caption{} \label{three-i-a} \end{subfigure} \hfill \begin{subfigure}{110pt} \centering \includegraphics{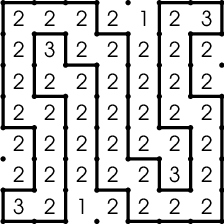} \caption{} \label{three-i-b} \end{subfigure} \hfill \begin{subfigure}{110pt} \centering \includegraphics{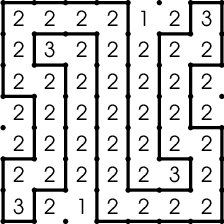} \caption{} \label{three-i-c} \end{subfigure} \hfill \null \caption{} \label{three-i} \end{figure}

\begin{figure}[ht] \null \hfill \begin{subfigure}{110pt} \centering \includegraphics{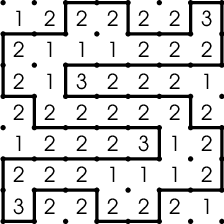} \caption{} \label{three-ii-a} \end{subfigure} \hfill \begin{subfigure}{110pt} \centering \includegraphics{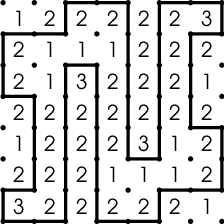} \caption{} \label{three-ii-b} \end{subfigure} \hfill \begin{subfigure}{110pt} \centering \includegraphics{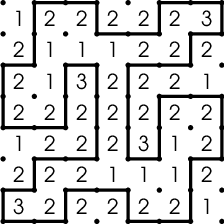} \caption{} \label{three-ii-c} \end{subfigure} \hfill \null \caption{} \label{three-ii} \end{figure}

We turn to multiplicity $3$. An exhaustive computer search demonstrates that the smallest square grid which admits this multiplicity is the one of size $8 \times 8$; furthermore, modulo the symmetries of the grid, there exist exactly two such signatures on it, shown in Figures \ref{three-i} and \ref{three-ii}. The first one of them coincides with the case $k = 1$ of the construction described below.

\begin{proposition} \label{m3} There exist Slitherlink signatures of multiplicity $3$ on arbitrarily large square grids. \end{proposition}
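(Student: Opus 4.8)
The plan is to give an explicit construction modeled on the proof of Proposition \ref{m4}, parametrised by a positive integer $k$. I would take $n$ to be an appropriate value that equals $8$ when $k = 1$ (for instance $n = 4k + 4$), so that the case $k = 1$ recovers the $8 \times 8$ signature of Figure \ref{three-i}, and work on the $n \times n$ grid $G$. As in Proposition \ref{m4} and in the proof of Theorem \ref{mp}, I would assemble a base cycle $C_1$ from the horizontal and vertical edges of $G$ lying inside the convex hull of two diamonds, together with explicit connecting paths near the south-west and north-east of that hull, arranged so that exactly two diamond toggles are locally available near the south-west corner.

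I would then fix two indices $a < b$ and set $C_2 = C_1 \Delta B(a)$ and $C_3 = C_1 \Delta B(b)$. Since the diamond sets $B(a)$ and $B(b)$ are totally even, each toggle automatically preserves the signature; the only thing to check is that every vertex retains degree $0$ or $2$, which is routine away from the diamond boundaries and needs care only at the boundary vertices. The feature that separates this from Proposition \ref{m4}, where all four combinations are cycles, is that here the simultaneous toggle $C_1 \Delta B(a) \Delta B(b)$ must \emph{fail} to be a cycle. I would engineer $a$ and $b$ so that the boundaries of $D(a)$ and $D(b)$ cross at a single grid vertex $w$ through which $C_1$ passes straight (say its vertical edges at $w$ are present): toggling $B(a)$ alone or $B(b)$ alone then merely rotates the pair of present edges at $w$, keeping degree $2$, whereas toggling both flips the two horizontal edges at $w$ into the cycle and sends $w$ to degree $4$. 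Thus $C_1 \Delta B(a) \Delta B(b)$ is totally even but is not a cycle, and $C_1$, $C_2$, $C_3$ are three genuine cycles with one common signature $S$.

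For uniqueness I would argue as follows. Let $C$ be any solution to $S$; then $C \Delta C_1$ is totally even, so by Lemma \ref{basis} it equals $\Delta_{i \in I} B(i)$, where $I$ is exactly the set of indices $i$ for which $C$ and $C_1$ disagree on the bottom edge $(i, 1)$---$(i + 1, 1)$. The clues of $S$ equal to $0$ in the lowermost row force $C$ to omit a long initial stretch of the bottom side, and a bounded amount of casework on the clues in the cells near the south-west corner then shows that $C$ agrees with $C_1$ on every bottom edge except possibly $(a, 1)$---$(a + 1, 1)$ and $(b, 1)$---$(b + 1, 1)$. Hence $I \subseteq \{a, b\}$, so $C \Delta C_1 \in \{\varnothing, B(a), B(b), B(a) \Delta B(b)\}$. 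The first three cases give $C \in \{C_1, C_2, C_3\}$, while the last is impossible because $C_1 \Delta B(a) \Delta B(b)$ is not a cycle. (Equivalently, one may read off the three distinct bottom-side patterns of $C_1$, $C_2$, $C_3$ and apply Lemma \ref{coin}.) Therefore $S$ has multiplicity exactly $3$.

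The main obstacle is engineering the local geometry at the crossing vertex $w$ so that precisely three of the four diamond-toggle combinations close up into single loops while the fourth produces a degree-$4$ vertex, and simultaneously choosing the clues near the corner so that the casework in the uniqueness step yields exactly the three admissible bottom-side patterns and no spurious fourth one. Once this local configuration is pinned down, everything else is the same kind of bookkeeping already carried out for Proposition \ref{m4}, and the $0$-clue reduction together with Lemma \ref{coin} (or the diamond decomposition) completes the count.
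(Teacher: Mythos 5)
Your overall strategy is the same as the paper's: take $n = 4k+4$, build an explicit base cycle $C_1$ around the convex hull of two diamonds, set $C_2$ and $C_3$ to be single diamond toggles, arrange for the double toggle to fail, and prove uniqueness by combining the $0$-clues in the bottom row with local casework near the south-west corner and Lemma \ref{coin}. (The paper uses $D(2)$ and $D(4)$, so its two diamond boundaries cross at the cell centre $(7/2,3/2)$ rather than at a grid vertex; the fourth combination is excluded there because that cell would acquire three edges against a clue of $1$. Your grid-vertex mechanism needs $a$ and $b$ of opposite parity, but is viable in principle.)

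There are, however, two genuine gaps. First, the claim that ``since the diamond sets $B(a)$ and $B(b)$ are totally even, each toggle automatically preserves the signature'' is false. Total evenness only guarantees that each clue changes by an even amount: if a cell $F$ contains $j$ edges of $C_1$ and $e$ edges of $B(a)$, of which $\ell$ lie in both, then $C_1 \Delta B(a)$ contains $j + e - 2\ell$ edges of $F$, and this equals $j$ only when exactly half of $F$'s $B(a)$-edges belong to $C_1$. This is precisely the warning in Section \ref{even} that the converse of ``same signature $\Rightarrow$ totally even difference'' fails, and it is exactly the property that the paper's constructions are engineered to satisfy (compare the prescription in Lemma \ref{cs} of taking the lower and right edges of certain cells and the left and upper edges of others). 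So signature preservation is the main thing to verify about $C_1$, not something you get for free; you cannot reduce the verification to checking vertex degrees. Second, for a constructive existence statement the construction is the proof, and you have deferred it: ``I would engineer $a$ and $b$ so that\dots'' together with the admission that this engineering is ``the main obstacle'' leaves precisely the substantive content of the paper's argument unsupplied. Your uniqueness skeleton (diamond decomposition read off the bottom side via Lemma \ref{basis}, then $I \subseteq \{a,b\}$, then Lemma \ref{coin}) is correct and matches the paper, but it only becomes a proof once a concrete $C_1$ with the required signature-preservation and connectivity properties is exhibited.
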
 

\begin{myproof} Let $k$ be a positive integer, let $n = 4k + 4$, and let $G$ be the $n \times n$ grid.

Denote by $\Theta$ the convex hull of the diamonds $D(2)$ and $D(4)$. Let $p_1 = (1, 2)$ and $p_2 = (2, 1)$ be the two vertices of $G$ on the south-west side of $\Theta$ and let $q_1$, $q_2$, $\ldots$, $q_{n - 4}$ be the vertices of $G$ on the south-east side of $\Theta$, in order as we go north-east, so that $q_i = (i + 4, i)$.

Take all horizontal edges of $G$ contained inside of $D(2) \cap D(4)$. To them, add the left and upper edges of all cells $(x, y)$ of $G$ with $x + y = 4$; the right and upper edges of all cells $(x, y)$ of $G$ with $x - y = 3$; and the images of all of these edges under central symmetry with respect to the center of $G$. Construct also the path $p_1$---$(1, 1)$---$p_2$; the paths $q_i{\sim}(i + 7, i){\sim}q_{i + 3}$ and $q_{i + 1}$---$(i + 6, i + 1)$---$q_{i + 2}$ for all $i$ with $i \equiv 1 \pmod 4$; and the images of all of these paths under central symmetry with respect to the center of $G$.

Let $C_1$ be the resulting subgraph of $G$. Define also $C_2 = C_1 \Delta B(2)$ and $C_3 = C_1 \Delta B(4)$. It is straightforward to see that $C_1$, $C_2$, and $C_3$ are three cycles on $G$ with the same signature $S$.

The argument that $S$ does not admit any other solutions proceeds along the same lines as in the proof of Proposition \ref{m4}. What changes is that we subdivide the lower side of $G$ into the subgraphs $(1, 1){\sim}(8, 1)$ and $(8, 1){\sim}(n, 1)$, and the casework relies on the clues of $S$ in the cells $(3/2, 3/2)$, $(3/2, 7/2)$, $(5/2, 5/2)$, $(7/2, 3/2)$, $(13/2, 3/2)$, $(13/2, 5/2)$, and $(15/2, 3/2)$. \end{myproof}

\section{Further Work} \label{further}

Perhaps the central open question in the area of Slitherlink signatures is that of which positive integers occur as the multiplicity of some Slitherlink signature. Partial advances would be of interest, too, such as a complete description of the grids which admit a signature with multiplicity at least $3$; a proof or disproof that there exists a Slitherlink signature with multiplicity at least $5$; and a proof or disproof that the multiplicities of Slitherlink signatures can be arbitrarily large.

There are many other settings where signatures could be studied, besides rectangular grids. For example, we could consider grids where the cells are unit equilateral triangles or unit regular hexagons rather than unit squares. Or, naturally, instead of various kinds of grids we could explore different classes of planar graphs altogether.

\section*{Acknowledgements} \label{ack}

The author is thankful to Professor Donald Knuth, for inciting him to study signatures in depth; and to Palmer Mebane, for contributing the alternative proofs of Theorem~\ref{len}, Corollary \ref{ext}, and Proposition \ref{22}.

\end{document}